\title[Operads of natural operations I]%
       {Operads of natural operations I: 
\\Lattice paths, braces and Hochschild cochains}
\author{Michael Batanin}
\address{Macquarie University, North Ryde, 2109, NSW, Australia}
\email{mbatanin@ics.mq.edu.au}
\author{Clemens Berger}
\address{Universit\'e de Nice, Lab. J.-A. Dieudonn\'e, Parc Valrose, 
F-06108 Nice, France}
\email{cberger@math.unice.fr}
\author{Martin~Markl}
\address{Mathematical Institute of the Academy, {\v Z}itn{\'a} 25,
         115 67 Prague 1, The Czech Republic}
\email{markl@math.cas.cz}
\thanks{M.~Batanin was supported by Scott Russell Johnson Memorial Fund and Australian Research Council Grant DP0558372.}
\thanks{C.~Berger was supported by the grant OBTH of the French Agence Nationale de Recherche}
\thanks{M.~Markl was supported by the grant GA \v CR
201/08/0397 and by the Academy of Sciences of the Czech Republic,
Institutional Research Plan No.~AV0Z10190503.}
\newtheorem{theorem}{Theorem}[section]
\newtheorem{lemma}[theorem]{Lemma}
\newtheorem{proposition}[theorem]{Proposition}
\theoremstyle{definition}
\newtheorem{example}[theorem]{Example}
\newtheorem{remark}[theorem]{Remark}
\newtheorem{definition}[theorem]{Definition}
\newtheorem{convention}[theorem]{Convention}
\newtheorem{cvo}[theorem]{Classical vs.~operadic}
\newtheorem{variant}[theorem]{Variant}
\newtheorem{unnorm}[theorem]{Un-normalized totalizations}
\newtheorem{norm}[theorem]{Normalized totalizations}
\newtheorem{semi}[theorem]{Semi-normalizations}
\newtheorem{latpath}[theorem]{Marked lattice paths}
\newtheorem{simpl}[theorem]{Simplicial structures}
\newtheorem{cosimpl}[theorem]{The cosimplicial structure}
\newtheorem{TT}[theorem]{Tamarkin-Tsygan operad}
\newtheorem*{notation}{Notation}
\newtheorem{the-big}[theorem]{The operad of natural operations}
\def\sqzld{{\mbox{$. \hskip -1pt . \hskip -1pt .$}}}
\def\NormB{\Norm(\Big)}\def\HB{\widehat{\Big}}
\def\NormT{\Norm(\Tam)}\def\nuBr{\widehat{\Br}}
\def\Mon{{\mathcal M}}\def\Bigclas{{\mathcal M}_{\it cl}}\def\In{{\it In}}
\def\Hom{{\it Hom\/}}\def\Nat{{\it Nat}}\def\bbT{{\mathbb T}}
\def\Big{{\mathcal B}}\def\Bigclas{{\mathcal B}_{\it cl}}\def\In{{\it In}}
\def\dh{d_H}\def\Dis{{\EuScript D}}\def\whis{{\it w\/}}
\def\Tam{{\mathcal T}}\def\uN{\underline{\rm Nor\/}} \def\nA{{\widehat A}}
\def\nBr{{\widehat \Big}}\def\nTam{{\widehat \Tam}}\def\Bn{{\widehat B}}
\def\NBrr#1{\widehat{\Br}^{\raisebox{-.35em}{\tiny $#1$}}}
\def\NBr{\Norm(\Br)}\def\HBr{\widehat{\Br}} \def\Chain{\EuScript Chain}
\def\Br{{\EuScript Br}}\def\Tn{{\widehat T}}
\def\UAss{\hbox{$U \hskip -.2em  \it{\mathcal A}ss$}}
\def\CH#1{C^{#1}(A;A)}\def\CHclas#1{C_{\it cl}^{#1}(A;A)}
 \def\br#1{{\it br_{#1}}}
\def\Rada#1#2#3{#1_{#2},\dots,#1_{#3}}\def\rada#1#2{{#1,\ldots,#2}}
\def\ot{\otimes} \def\pa{\partial}\def\SC{{\mathit C}}
\def\exeptional{$\ \unitlength 1em \thicklines\line(0,1){1}\ $}
\def\id{{\mathrm {{id}}}}\def\otexp#1#2{{#1}^{\otimes {#2}}}
\def\Span{{\rm Span}}\def\amp{{\it amp\/}}\def\oN{\overline{\rm Nor\/}}
\def\Lin{{\it Lin\/}}\def\proj{{\it proj\/}}
\def\Endop{{\mathcal E}{\hskip -.2em \it nd\/}}\def\Angl{{\it Angl}}
\def\desusp{\downarrow \hskip -.2em}\def\ss{{\mathbf s\, }}
\def\da{\downarrow}\def\sgn{{\rm sgn}}\def\Im{{\it Im}}
\def\lstretch#1{\left(\rule{0pt}{#1em}\right.}
\def\rstretch#1{\left.\rule{0pt}{#1em}\right)}
\def\cupop{\hbox{$\raisebox{.05em}{\mbox{-}}\hskip-.2em 
            \cup\hskip-.2em\raisebox{.05em}{\mbox{-}}$}}
\def\circop{\hbox{$\raisebox{.05em}{\mbox{-}}\hskip-.2em 
            \circ\hskip-.2em\raisebox{.05em}{\mbox{-}}$}}
\def\stub{\ 
\unitlength 1em 
\begin{picture}(0,1)
\thicklines
\put(0,0){\line(0,1){1}}
\put(0,0){\makebox(0,0){$\bullet$}}
\end{picture}
}
\def\doubless#1#2{{
\def\arraystretch{.5}
\begin{array}{c}
\mbox{\scriptsize $\scriptstyle #1$}
\\
\mbox{\scriptsize $\scriptstyle #2$}
\end{array}\def\arraystretch{1}
}}
\def\Lat#1{{\EuScript L_{(#1)}}} \def\Latnic{{\EuScript L}} 
\def\catstar{{\it Cat}_{*,*}} \def\op{{\rm op}}
\def\INT{{\it Int}}\def\cub{{\EuScript Q}}
\def\uTot{\underline{\rm Tot}}\def\EE{{\overline E}}\def\dd{{\overline d}}
\def\oTot{\overline{\rm Tot}}
\def\nLat#1{\overline{\EuScript L}_{(#1)}} \def\Norm{{\rm Nor\/}} 
\def\sLat#1{\dot{\EuScript L}_{(#1)}}\def\BBar{{\sf B}}
\def\Brac#1{{\EuScript Br}_{(#1)}}
\def\nBrac#1{\Norm({\EuScript Br}_{(#1)}\hskip -.05em)}
\def\HBrac#1{\widehat{\EuScript Br}_{(#1)}}
\def\A{{{\mathbb Z}[x]}}\def\nonneg{{\mathbb N}}
\def\nat{{\mathbb N}}\def\abel{{\rm Abel}}
\def\epi{{\twoheadrightarrow}}\def\Deg{{\it Dgn}}
\def\Coend{{\it Coend}}
\def\bbN{{\mathbb N}}
\def\cases#1#2#3#4{
                  \left\{
                         \begin{array}{ll}
                           #1,\ &\mbox{#2}
                           \\
                           #3,\ &\mbox{#4}
                          \end{array}
                   \right.
}
\def\AAd{\thicklines
{% Picture saved by xtexcad 2.4
\unitlength=.600000pt
\begin{picture}(40.00,60.00)(0.00,-10.00)
\put(20.00,5.00){\makebox(0.00,0.00)[t]
    {$\underbrace{\rule{3em}{0pt}}_{\mbox{\scriptsize $d$-times}}$}}
\put(27.00,10.00){\makebox(0.00,0.00){$\cdots$}}
\put(40.00,10.00){\makebox(0.00,0.00){$\bullet$}}
\put(10.00,10.00){\makebox(0.00,0.00){$\bullet$}}
\put(0.00,10.00){\makebox(0.00,0.00){$\bullet$}}
\put(20.00,30.00){\makebox(0.00,0.00){$\circ$}}
\put(20.00,50.00){\line(0,-1){17.00}}
\put(40.00,10.00){\line(-1,1){18.00}}
\put(10.00,10.00){\line(1,2){9.00}}
\put(0.00,10.00){\line(1,1){18.00}}
\end{picture}}
}
\def\Abracemod#1{\thicklines
{% Picture saved by xtexcad 2.4
\unitlength=.600000pt
\begin{picture}(60.00,30.00)(-10.00,15.00)
\put(27.00,6.00){\makebox(0.00,0.00){$\cdots$}}
\put(42.00,6.00){\makebox(0.00,0.00){$\circ$}}
\put(8.00,6.00){\makebox(0.00,0.00){$\circ$}}
\put(-2.00,6.00){\makebox(0.00,0.00){$\circ$}}
\put(20.00,31.00){\makebox(0.00,0.00){$\circ$}}
\put(20.00,50.00){\line(0,-1){15.00}}
\put(40.00,10.00){\line(-1,1){18.00}}
\put(10.00,10.00){\line(1,2){9.00}}
\put(0.00,10.00){\line(1,1){18.00}}
\put(25.00,40.00){\makebox(0.00,0.00)[l]{\scriptsize $1$}}
\put(-2.00,1.00){\makebox(0.00,0.00)[t]{\scriptsize $2$}}
\put(9.00,1.00){\makebox(0.00,0.00)[t]{\scriptsize $3$}}
\put(42.00,1.00){\makebox(0.00,0.00)[t]{\scriptsize $#1$}}
\end{picture}}
}
\def\Jarunka#1{
\unitlength=1.700000pt
\begin{picture}(40.00,30.00)(0.00,10.00)
\thicklines
\put(27.00,10.00){\makebox(0.00,0.00){$\cdots$}}
\put(20.00,30.00){\makebox(0.00,0.00){$#1$}}
\put(20.00,41.00){\line(0,-1){10.00}}
\put(21.00,29.50){\line(3,-2){22.00}}
\put(19.50,29.00){\line(-1,-2){7.00}}
\put(19.00,29.50){\line(-3,-2){22.0}}
\put(-10.00,9.50){\trianglepara 1}
\put(5.50,9.50){\trianglepara 2}
\put(36.00,9.50){\trianglepara d}
\end{picture}
}
\def\Jarunkamod{
\unitlength=1.700000pt
\begin{picture}(40.00,30.00)(0.00,10.00)
\thicklines
\put(27.00,10.00){\makebox(0.00,0.00){$\cdots$}}
\put(20.00,30.00){\makebox(0.00,0.00){$\bullet$}}
\put(20.00,41.00){\line(0,-1){10.00}}
\put(21.00,29.50){\line(3,-2){22.00}}
\put(5.5,20.00){\line(3,-2){7.00}}
\put(5.5,20.00){\makebox(0.00,0.00){$\bullet$}}
\put(19.00,29.50){\line(-3,-2){22.0}}
\put(-10.00,9.50){\trianglepara 1}
\put(5.50,9.50){\trianglepara 2}
\put(36.00,9.50){\trianglepara d}
\end{picture}
}
\def\AAmodifiedd{\thicklines
{% Picture saved by xtexcad 2.4
\unitlength=.800000pt
\begin{picture}(50.00,35.00)(-10.00,12.00)
\put(20.00,8.00){\makebox(0.00,0.00)[t]
    {$\underbrace{\rule{3em}{0pt}}_{\mbox{\scriptsize $d$-times}}$}}
\put(27.00,10.00){\makebox(0.00,0.00){$\cdots$}}
\put(20.00,30.00){\makebox(0.00,0.00){$\circ$}}
\put(20.00,46.00){\line(0,-1){12.00}}
\put(40.00,10.00){\line(-1,1){18.00}}
\put(10.00,10.00){\line(1,2){9.00}}
\put(0.00,10.00){\line(1,1){18.00}}
\end{picture}}
}
\def\AAmodifieddd{\thicklines
{% Picture saved by xtexcad 2.4
\unitlength=.600000pt
\begin{picture}(60.00,35.00)(-10.00,12.00)
\put(20.00,8.00){\makebox(0.00,0.00)[t]
    {$\underbrace{\rule{2.2em}{0pt}}_{\mbox{\scriptsize $d$-times}}$}}
\put(27.00,10.00){\makebox(0.00,0.00){\scriptsize $\cdots$}}
\put(20.00,30.00){\makebox(0.00,0.00){$\circ$}}
\put(25.00,35.00){\makebox(0.00,0.00)[lb]{\scriptsize 1}}
\put(20.00,46.00){\line(0,-1){12.00}}
\put(40.00,10.00){\line(-1,1){18.00}}
\put(10.00,10.00){\line(1,2){9.00}}
\put(0.00,10.00){\line(1,1){18.00}}
\end{picture}}
}
\def\white{\thicklines
{% Picture saved by xtexcad 2.4
\unitlength=.600000pt
\begin{picture}(50.00,35.00)(-10.00,12.00)
\put(25.00,11.00){\makebox(0.00,0.00){\scriptsize $\cdots$}}
\put(20.00,30.00){\makebox(0.00,0.00){$\circ$}}
\put(40.00,10.00){\line(-1,1){18.00}}
\put(10.00,10.00){\line(1,2){9.00}}
\put(0.00,10.00){\line(1,1){18.00}}
\end{picture}}
}
\def\rwhite{\thicklines
{% Picture saved by xtexcad 2.4
\unitlength=.600000pt
\begin{picture}(80.00,35.00)(-35.00,-15)
\put(0.00,0.00){\makebox(0.00,0.00){$\bullet$}}
\put(-28.00,-10.00){\makebox(0.00,0.00){\scriptsize $i$}}
\put(-20.5,-20.50){\makebox(0.00,0.00){\white}}
\put(0.00,0.00){\line(1,-1){36.00}}
\put(0.00,0.00){\line(0,1){19.00}}
\put(0.00,0.00){\line(-1,-1){17.00}}
\end{picture}}
}
\def\lwhite{\thicklines
{% Picture saved by xtexcad 2.4
\unitlength=.600000pt
\begin{picture}(80.00,35.00)(-35.00,-15)
\put(28.00,-10.00){\makebox(0.00,0.00){\scriptsize $i$}}
\put(0.00,0.00){\makebox(0.00,0.00){$\bullet$}}
\put(18.00,-20.00){\makebox(0.00,0.00){\white}}
\put(0.00,0.00){\line(-1,-1){36.00}}
\put(0.00,0.00){\line(0,1){19.00}}
\put(0.00,0.00){\line(1,-1){17.00}}
\end{picture}}
}
\def\cwhite{\thicklines
{% Picture saved by xtexcad 2.4
\unitlength=.600000pt
\begin{picture}(100.00,35.00)(-45.00,-15)
\put(0.00,4.00){\makebox(0.00,0.00){$\circ$}}
\put(10.0,9.00){\makebox(0.00,0.00){\scriptsize $i$}}
\put(5.0,-9.00){\makebox(0.00,0.00){\scriptsize $s$}}
\put(-2.00,2.00){\line(-1,-1){38.00}}
\put(-1.00,2.00){\line(-3,-4){28.5}}
\put(2.00,2.00){\line(1,-1){38.00}}
\put(0.00,8.00){\line(0,1){15.00}}
\put(0.00,1.00){\line(0,-1){19.00}}
\put(0.00,-19.00){\makebox(0.00,0.00){$\bullet$}}
\put(0.00,-19.00){\line(1,-2){8.50}}
\put(0.00,-19.00){\line(-1,-2){8.50}}
\put(25.50,-39.00){\makebox(0.00,0.00)[b]{\scriptsize $\cdots$}}
\put(-17.50,-39.00){\makebox(0.00,0.00)[b]{\scriptsize $\cdots$}}
\end{picture}}
}
\def\triangle{\thicklines
{% Picture saved by xtexcad 2.4
\unitlength=.600000pt
\begin{picture}(45.00,25.00)(-20.00,-15)
\put(0.00,0.00){\line(0,1){10.00}}
\put(0.00,0.00){\line(1,-1){18.00}}
\put(0.00,0.00){\line(-1,-1){18.00}}
\put(-18,-17){\line(1,0){36.00}}
\put(-14,-17){\line(0,-1){9.00}}
\put(-10,-17){\line(0,-1){9.00}}
\put(14,-17){\line(0,-1){9.00}}
\put(3,-24){\makebox(0.00,0.00){\scriptsize $\cdots$}}
\end{picture}}
}
\def\trianglepara#1{\thicklines
{% Picture saved by xtexcad 2.4
\unitlength=.600000pt
\begin{picture}(45.00,25.00)(-20.00,-15)
\put(0.00,0.00){\line(1,-1){18.00}}
\put(0.00,0.00){\line(-1,-1){18.00}}
\put(-18,-17){\line(1,0){36.00}}
\put(-14,-17){\line(0,-1){9.00}}
\put(-10,-17){\line(0,-1){9.00}}
\put(14,-17){\line(0,-1){9.00}}
\put(3,-24){\makebox(0.00,0.00){\scriptsize $\cdots$}}
\put(-11,-10){\makebox(0.00,0.00)[br]{\scriptsize $S_#1$}}
\end{picture}}
}
\def\verytriangle{\thicklines
{% Picture saved by xtexcad 2.4
\unitlength=.600000pt
\begin{picture}(45.00,25.00)(-20.00,-15)
\put(0.00,0.00){\line(0,1){10.00}}
\put(0.00,0.00){\line(1,-1){18.00}}
\put(0.00,0.00){\line(-1,-1){18.00}}
\put(-18,-18){\line(1,0){36.00}}
\put(-15,-18){\line(-1,-2){18.00}}
\put(-10,-18){\line(-1,-2){18.00}}
\put(15,-18){\line(1,-2){18.00}}
\end{picture}}
}
\def\amptriangle{\thicklines
{% Picture saved by xtexcad 2.4
\unitlength=.600000pt
\begin{picture}(45.00,25.00)(-20.00,-15)
\put(0.00,0.00){\line(1,-1){18.00}}
\put(0.00,0.00){\line(-1,-1){18.00}}
\put(-18,-17){\line(1,0){36.00}}
\put(-14,-17){\line(0,-1){9.00}}
\put(-10,-17){\line(0,-1){9.00}}
\put(14,-17){\line(0,-1){9.00}}
\put(3,-24){\makebox(0.00,0.00){\scriptsize $\cdots$}}
\end{picture}}
}
\def\lamptriangle{\thicklines
{% Picture saved by xtexcad 2.4
\unitlength=.600000pt
\begin{picture}(80.00,35.00)(-35.00,-15)
\put(0.00,0.00){\makebox(0.00,0.00){$\bullet$}}
\put(21.00,-18.00){\makebox(0.00,0.00){\amptriangle}}
\put(0.00,0.00){\line(-1,-1){36.00}}
\put(0.00,0.00){\line(0,1){19.00}}
\put(0.00,0.00){\line(1,-1){17.00}}
\end{picture}}
}
\def\ramptriangle{\thicklines
{% Picture saved by xtexcad 2.4
\unitlength=.600000pt
\begin{picture}(80.00,35.00)(-35.00,-15)
\put(0.00,0.00){\makebox(0.00,0.00){$\bullet$}}
\put(-10.00,-18.00){\makebox(0.00,0.00){\amptriangle}}
\put(0.00,0.00){\line(1,-1){36.00}}
\put(0.00,0.00){\line(0,1){19.00}}
\put(0.00,0.00){\line(-1,-1){17.00}}
\end{picture}}
}
\def\camptriangle{\thicklines
{% Picture saved by xtexcad 2.4
\unitlength=.600000pt
\begin{picture}(100.00,35.00)(-45.00,-15)
\put(3.00,16.00){\makebox(0.00,0.00){\verytriangle}}
\put(4.0,-9.00){\makebox(0.00,0.00)[l]{\scriptsize $s$}}
\put(0.00,1.00){\line(0,-1){19.00}}
\put(0.00,-19.00){\makebox(0.00,0.00){$\bullet$}}
\put(0.00,-19.00){\line(1,-2){8.50}}
\put(0.00,-19.00){\line(-1,-2){8.50}}
\put(19,-34){\makebox(0.00,0.00){\scriptsize $\cdots$}}
\put(-19,-34){\makebox(0.00,0.00){\scriptsize $\cdots$}}
\end{picture}}
}
\def\AAnulamin{\thicklines
{% Picture saved by xtexcad 2.4
\unitlength=.700000pt
\begin{picture}(10.00,15.00)(-5.00,0.00)
\put(0.00,0.00){\makebox(0.00,0.00){$\circ$}}
\put(0.00,15.00){\line(0,-1){12.00}}
\end{picture}}
}
\def\AAblack{\thicklines
{% Picture saved by xtexcad 2.4
\unitlength=.700000pt
\begin{picture}(10.00,20.00)(-5.00,0.00)
\put(0.00,0.00){\makebox(0.00,0.00){$\bullet$}}
\put(0.00,15.00){\line(0,-1){12.00}}
\end{picture}}
}
\def\AAparm#1#2{\thicklines
{% Picture saved by xtexcad 2.4
\unitlength=.500000pt
\begin{picture}(60.00,50.00)(-10,10.00)
\put(43.00,-2.00){\makebox(0.00,0.00)[t]{\scriptsize $#2$}}
\put(-3.00,-2.00){\makebox(0.00,0.00)[t]{\scriptsize $#1$}}
\put(43.00,7.00){\makebox(0.00,0.00){$\circ$}}
\put(-3.00,7.00){\makebox(0.00,0.00){$\circ$}}
\put(20.00,30.00){\makebox(0.00,0.00){$\bullet$}}
\put(20.00,30.00){\line(1,-1){20.00}}
\put(20.00,30.00){\line(-1,-1){20.00}}
\put(20.00,50.00){\line(0,-1){20.00}}
\end{picture}}
}
\def\AAAAparm#1#2#3{\thicklines
{% Picture saved by xtexcad 2.4
\unitlength=.50000pt
\begin{picture}(60.00,50.00)(-10,13.00)
\put(43.00,-2.00){\makebox(0.00,0.00)[t]{\scriptsize $#2$}}
\put(-3.00,-2.00){\makebox(0.00,0.00)[t]{\scriptsize $#1$}}
\put(30.00,30.00){\makebox(0.00,0.00)[l]{\scriptsize $#3$}}
\put(45.00,5.00){\makebox(0.00,0.00){$\circ$}}
\put(-5.00,5.00){\makebox(0.00,0.00){$\circ$}}
\put(20.00,30.00){\makebox(0.00,0.00){$\circ$}}
\put(22.00,28.00){\line(1,-1){20.00}}
\put(18.00,28.00){\line(-1,-1){20.00}}
\put(20.00,55.00){\line(0,-1){20.00}}
\end{picture}}
}
\def\trioparm#1{\thicklines
{% Picture saved by xtexcad 2.4
\unitlength=.5pt
\begin{picture}(60.00,50.00)(-10,18.00)
\put(30.00,30.00){\makebox(0.00,0.00)[l]{\scriptsize $#1$}}
\put(20.00,30.00){\makebox(0.00,0.00){$\circ$}}
\put(22.00,28.00){\line(1,-1){20.00}}
\put(18.00,28.00){\line(-1,-1){20.00}}
\put(20.00,55.00){\line(0,-1){20.00}}
\end{picture}}
}
\def\trioextparam#1{\thicklines
{% Picture saved by xtexcad 2.4
\unitlength=.5pt
\begin{picture}(60.00,50.00)(-10,40.00)
\put(30.00,50.00){\makebox(0.00,0.00)[l]{\scriptsize $#1$}}
\put(20.00,52.00){\makebox(0.00,0.00){$\circ$}}
\put(20.00,30.00){\makebox(0.00,0.00){$\bullet$}}
\put(22.00,28.00){\line(1,-1){20.00}}
\put(18.00,28.00){\line(-1,-1){20.00}}
\put(20.00,49.00){\line(0,-1){20.00}}
\put(20.00,73.00){\line(0,-1){16.00}}
\end{picture}}
}
\def\AAAparm#1#2#3{\thicklines
{% Picture saved by xtexcad 2.4
\unitlength=.500000pt
\begin{picture}(60.00,50.00)(-10,33.00)
\put(43.00,15.00){\makebox(0.00,0.00)[b]{\scriptsize $#2$}}
\put(-3.00,15.00){\makebox(0.00,0.00)[b]{\scriptsize $#1$}}
\put(27.00,52.00){\makebox(0.00,0.00)[l]{\scriptsize $#3$}}
\put(43.00,7.00){\makebox(0.00,0.00){$\circ$}}
\put(-3.00,7.00){\makebox(0.00,0.00){$\circ$}}
\put(20.00,52.00){\makebox(0.00,0.00){$\circ$}}
\put(20.00,30.00){\makebox(0.00,0.00){$\bullet$}}
\put(20.00,30.00){\line(1,-1){20.00}}
\put(20.00,30.00){\line(-1,-1){20.00}}
\put(20.00,50.00){\line(0,-1){20.00}}
\put(20.00,57.00){\line(0,1){15.00}}
\end{picture}}
}
\def\AAparm#1#2{\thicklines
{% Picture saved by xtexcad 2.4
\unitlength=.500000pt
\begin{picture}(60.00,40.00)(-10,20.00)
\put(43.00,15.00){\makebox(0.00,0.00)[b]{\scriptsize $#2$}}
\put(-3.00,15.00){\makebox(0.00,0.00)[b]{\scriptsize $#1$}}
\put(43.00,7.00){\makebox(0.00,0.00){$\circ$}}
\put(-3.00,7.00){\makebox(0.00,0.00){$\circ$}}
\put(20.00,30.00){\makebox(0.00,0.00){$\bullet$}}
\put(20.00,30.00){\line(1,-1){20.00}}
\put(20.00,30.00){\line(-1,-1){20.00}}
\put(20.00,50.00){\line(0,-1){20.00}}
\end{picture}}
}
\def\AAAl{\thicklines
{% Picture saved by xtexcad 2.4
\unitlength=.500000pt
\begin{picture}(60.00,40.00)(-10,20.00)
\put(-3.00,15.00){\makebox(0.00,0.00)[b]{\scriptsize $1$}}
\put(-3.00,7.00){\makebox(0.00,0.00){$\circ$}}
\put(20.00,30.00){\makebox(0.00,0.00){$\bullet$}}
\put(20.00,30.00){\line(1,-1){20.00}}
\put(20.00,30.00){\line(-1,-1){20.00}}
\put(20.00,50.00){\line(0,-1){20.00}}
\end{picture}}
}
\def\AAAr{\thicklines
{% Picture saved by xtexcad 2.4
\unitlength=.500000pt
\begin{picture}(60.00,40.00)(-10,20.00)
\put(43.00,15.00){\makebox(0.00,0.00)[b]{\scriptsize $1$}}
\put(43.00,7.00){\makebox(0.00,0.00){$\circ$}}
\put(20.00,30.00){\makebox(0.00,0.00){$\bullet$}}
\put(20.00,30.00){\line(1,-1){20.00}}
\put(20.00,30.00){\line(-1,-1){20.00}}
\put(20.00,50.00){\line(0,-1){20.00}}
\end{picture}}
}
\def\AAAlextparm#1{\thicklines
{% Picture saved by xtexcad 2.4
\unitlength=.500000pt
\begin{picture}(80.00,40.00)(-30,20.00)
\put(-3.00,15.00){\makebox(0.00,0.00)[b]{\scriptsize $#1$}}
\put(-3.00,7.00){\makebox(0.00,0.00){$\circ$}}
\put(20.00,30.00){\makebox(0.00,0.00){$\bullet$}}
\put(20.00,30.00){\line(1,-1){20.00}}
\put(20.00,30.00){\line(-1,-1){20.00}}
\put(-6.00,4.00){\line(-1,-1){20.00}}
\put(20.00,50.00){\line(0,-1){20.00}}
\end{picture}}
}
\def\AAArextparm#1{\thicklines
{% Picture saved by xtexcad 2.4
\unitlength=.500000pt
\begin{picture}(80.00,40.00)(0,20.00)
\put(43.00,15.00){\makebox(0.00,0.00)[b]{\scriptsize $#1$}}
\put(43.00,7.00){\makebox(0.00,0.00){$\circ$}}
\put(20.00,30.00){\makebox(0.00,0.00){$\bullet$}}
\put(20.00,30.00){\line(1,-1){20.00}}
\put(46.00,4.00){\line(1,-1){20.00}}
\put(20.00,30.00){\line(-1,-1){20.00}}
\put(20.00,50.00){\line(0,-1){20.00}}
\end{picture}}
}
\def\AAlparm#1#2{\thicklines
{% Picture saved by xtexcad 2.4
\unitlength=.500000pt
\begin{picture}(60.00,40.00)(-10,10.00)
\put(43.00,15.00){\makebox(0.00,0.00)[b]{\scriptsize $#2$}}
\put(-3.00,15.00){\makebox(0.00,0.00)[b]{\scriptsize $#1$}}
\put(43.00,7.00){\makebox(0.00,0.00){$\circ$}}
\put(-3.00,7.00){\makebox(0.00,0.00){$\circ$}}
\put(20.00,30.00){\makebox(0.00,0.00){$\bullet$}}
\put(20.00,30.00){\line(1,-1){20.00}}
\put(20.00,30.00){\line(-1,-1){20.00}}
\put(20.00,50.00){\line(0,-1){20.00}}
\put(43.00,5.00){\line(0,-1){15.00}}
\put(43.00,-10.00){\makebox(0.00,0.00){$\bullet$}}
\end{picture}}
}
\def\AAlparmm#1#2{\thicklines
{% Picture saved by xtexcad 2.4
\unitlength=.500000pt
\begin{picture}(60.00,40.00)(-10,10.00)
\put(43.00,15.00){\makebox(0.00,0.00)[b]{\scriptsize $#2$}}
\put(-3.00,15.00){\makebox(0.00,0.00)[b]{\scriptsize $#1$}}
\put(43.00,7.00){\makebox(0.00,0.00){$\circ$}}
\put(-3.00,7.00){\makebox(0.00,0.00){$\circ$}}
\put(20.00,30.00){\makebox(0.00,0.00){$\bullet$}}
\put(20.00,30.00){\line(1,-1){20.00}}
\put(20.00,30.00){\line(-1,-1){20.00}}
\put(20.00,50.00){\line(0,-1){20.00}}
\put(43.00,4.00){\line(0,-1){15.00}}
\end{picture}}
}
\def\AArparm#1#2{\thicklines
{% Picture saved by xtexcad 2.4
\unitlength=.500000pt
\begin{picture}(60.00,40.00)(-10,10.00)
\put(43.00,15.00){\makebox(0.00,0.00)[b]{\scriptsize $#2$}}
\put(-3.00,15.00){\makebox(0.00,0.00)[b]{\scriptsize $#1$}}
\put(43.00,7.00){\makebox(0.00,0.00){$\circ$}}
\put(-3.00,7.00){\makebox(0.00,0.00){$\circ$}}
\put(20.00,30.00){\makebox(0.00,0.00){$\bullet$}}
\put(20.00,30.00){\line(1,-1){20.00}}
\put(20.00,30.00){\line(-1,-1){20.00}}
\put(20.00,50.00){\line(0,-1){20.00}}
\put(-3.00,5.00){\line(0,-1){15.00}}
\put(-3.00,-10.00){\makebox(0.00,0.00){$\bullet$}}
\end{picture}}
}
\def\AAAlparm#1#2#3{\thicklines
{% Picture saved by xtexcad 2.4
\unitlength=.500000pt
\begin{picture}(60.00,40.00)(-10,10.00)
\put(43.00,15.00){\makebox(0.00,0.00)[b]{\scriptsize $#2$}}
\put(-3.00,15.00){\makebox(0.00,0.00)[b]{\scriptsize $#1$}}
\put(5.00,-15.00){\makebox(0.00,0.00)[l]{\scriptsize $#3$}}
\put(43.00,7.00){\makebox(0.00,0.00){$\circ$}}
\put(-3.00,7.00){\makebox(0.00,0.00){$\circ$}}
\put(20.00,30.00){\makebox(0.00,0.00){$\bullet$}}
\put(20.00,30.00){\line(1,-1){20.00}}
\put(20.00,30.00){\line(-1,-1){20.00}}
\put(20.00,50.00){\line(0,-1){20.00}}
\put(-3.00,5.00){\line(0,-1){15.00}}
\put(-3.00,-16.00){\makebox(0.00,0.00){$\circ$}}
\end{picture}}
}
\def\AAArparm#1#2#3{\thicklines
{% Picture saved by xtexcad 2.4
\unitlength=.500000pt
\begin{picture}(60.00,40.00)(-50,10.00)
\put(-43.00,15.00){\makebox(0.00,0.00)[b]{\scriptsize $#2$}}
\put(3.00,15.00){\makebox(0.00,0.00)[b]{\scriptsize $#1$}}
\put(-5.00,-15.00){\makebox(0.00,0.00)[r]{\scriptsize $#3$}}
\put(-43.00,7.00){\makebox(0.00,0.00){$\circ$}}
\put(3.00,7.00){\makebox(0.00,0.00){$\circ$}}
\put(-20.00,30.00){\makebox(0.00,0.00){$\bullet$}}
\put(-20.00,30.00){\line(-1,-1){20.00}}
\put(-20.00,30.00){\line(1,-1){20.00}}
\put(-20.00,50.00){\line(0,-1){20.00}}
\put(3.00,5.00){\line(0,-1){15.00}}
\put(3.00,-16.00){\makebox(0.00,0.00){$\circ$}}
\end{picture}}
}
\def\AAlrparm#1#2{\thicklines
{% Picture saved by xtexcad 2.4
\unitlength=.500000pt
\begin{picture}(60.00,40.00)(-10,10.00)
\put(43.00,15.00){\makebox(0.00,0.00)[b]{\scriptsize $#2$}}
\put(-3.00,15.00){\makebox(0.00,0.00)[b]{\scriptsize $#1$}}
\put(43.00,7.00){\makebox(0.00,0.00){$\circ$}}
\put(-3.00,7.00){\makebox(0.00,0.00){$\circ$}}
\put(20.00,30.00){\makebox(0.00,0.00){$\bullet$}}
\put(20.00,30.00){\line(1,-1){20.00}}
\put(20.00,30.00){\line(-1,-1){20.00}}
\put(20.00,50.00){\line(0,-1){20.00}}
\put(-3.00,4.00){\line(0,-1){15.00}}
\put(43.00,4.00){\line(0,-1){15.00}}
\end{picture}}
}
\def\AArparmm#1#2{\thicklines
{% Picture saved by xtexcad 2.4
\unitlength=.500000pt
\begin{picture}(60.00,40.00)(-10,10.00)
\put(43.00,15.00){\makebox(0.00,0.00)[b]{\scriptsize $#2$}}
\put(-3.00,15.00){\makebox(0.00,0.00)[b]{\scriptsize $#1$}}
\put(43.00,7.00){\makebox(0.00,0.00){$\circ$}}
\put(-3.00,7.00){\makebox(0.00,0.00){$\circ$}}
\put(20.00,30.00){\makebox(0.00,0.00){$\bullet$}}
\put(20.00,30.00){\line(1,-1){20.00}}
\put(20.00,30.00){\line(-1,-1){20.00}}
\put(20.00,50.00){\line(0,-1){20.00}}
\put(-3.00,4.00){\line(0,-1){15.00}}
\end{picture}}
}
\def\AArrparmm#1#2{\thicklines
{% Picture saved by xtexcad 2.4
\unitlength=.500000pt
\begin{picture}(60.00,40.00)(-10,10.00)
\put(43.00,15.00){\makebox(0.00,0.00)[b]{\scriptsize $#2$}}
\put(-3.00,15.00){\makebox(0.00,0.00)[b]{\scriptsize $#1$}}
\put(43.00,7.00){\makebox(0.00,0.00){$\circ$}}
\put(-3.00,7.00){\makebox(0.00,0.00){$\circ$}}
\put(20.00,30.00){\makebox(0.00,0.00){$\bullet$}}
\put(20.00,30.00){\line(1,-1){20.00}}
\put(20.00,30.00){\line(-1,-1){20.00}}
\put(20.00,50.00){\line(0,-1){20.00}}
\put(-5,3.50){\qbezier(0,0)(-3,-8)(-6,-16)}
\put(-1,3.50){\qbezier(0,0)(3,-8)(6,-16)}
\end{picture}}
}
\def\AAllparmm#1#2{\thicklines
{% Picture saved by xtexcad 2.4
\unitlength=.500000pt
\begin{picture}(60.00,40.00)(-10,10.00)
\put(43.00,15.00){\makebox(0.00,0.00)[b]{\scriptsize $#2$}}
\put(-3.00,15.00){\makebox(0.00,0.00)[b]{\scriptsize $#1$}}
\put(43.00,7.00){\makebox(0.00,0.00){$\circ$}}
\put(-3.00,7.00){\makebox(0.00,0.00){$\circ$}}
\put(20.00,30.00){\makebox(0.00,0.00){$\bullet$}}
\put(20.00,30.00){\line(1,-1){20.00}}
\put(20.00,30.00){\line(-1,-1){20.00}}
\put(20.00,50.00){\line(0,-1){20.00}}
\put(41,3.50){\qbezier(0,0)(-3,-8)(-6,-16)}
\put(45,3.50){\qbezier(0,0)(3,-8)(6,-16)}
\end{picture}}
}
\def\amputabletree#1#2{\thicklines
{% Picture saved by xtexcad 2.4
\unitlength=.500000pt
\begin{picture}(60.00,40.00)(-10,10.00)
\put(43.00,15.00){\makebox(0.00,0.00)[b]{\scriptsize $#2$}}
\put(-3.00,15.00){\makebox(0.00,0.00)[b]{\scriptsize $#1$}}
\put(43.00,7.00){\makebox(0.00,0.00){$\circ$}}
\put(-3.00,7.00){\makebox(0.00,0.00){$\circ$}}
\put(20.00,30.00){\makebox(0.00,0.00){$\bullet$}}
\put(20.00,30.00){\line(1,-1){20.00}}
\put(20.00,30.00){\line(-1,-1){20.00}}
\put(20.00,50.00){\line(0,-1){20.00}}
\put(-3.00,4.00){\line(0,-1){15.00}}
\put(-3.00,-10.00){\makebox(0.00,0.00){$\bullet$}}
\put(43.00,4.00){\line(0,-1){15.00}}
\end{picture}}
}
\def\notamputabletree{\thicklines
{% Picture saved by xtexcad 2.4
\unitlength=.500000pt
\begin{picture}(60.00,40.00)(-10,10.00)
\put(43.00,7.00){\makebox(0.00,0.00){$\circ$}}
\put(20.00,30.00){\makebox(0.00,0.00){$\bullet$}}
\put(20.00,30.00){\line(1,-1){20.00}}
\put(20.00,30.00){\line(-1,-1){20.00}}
\put(20.00,50.00){\line(0,-1){20.00}}
\put(43.00,4.00){\line(0,-1){15.00}}
\end{picture}}
}
\def\amputatedtree#1#2{\thicklines
{% Picture saved by xtexcad 2.4
\unitlength=.500000pt
\begin{picture}(60.00,40.00)(-10,10.00)
\put(43.00,15.00){\makebox(0.00,0.00)[b]{\scriptsize $#2$}}
\put(-3.00,15.00){\makebox(0.00,0.00)[b]{\scriptsize $#1$}}
\put(43.00,7.00){\makebox(0.00,0.00){$\circ$}}
\put(-3.00,7.00){\makebox(0.00,0.00){$\circ$}}
\put(20.00,30.00){\makebox(0.00,0.00){$\bullet$}}
\put(20.00,30.00){\line(1,-1){20.00}}
\put(20.00,30.00){\line(-1,-1){20.00}}
\put(20.00,50.00){\line(0,-1){20.00}}
\put(-3.00,4.00){\line(0,-1){15.00}}
\put(-3.00,-10.00){\makebox(0.00,0.00){$\bullet$}}
\end{picture}}
}
\def\VVparm#1#2{\thicklines
{% Picture saved by xtexcad 2.4
\unitlength=.500000pt
\begin{picture}(30.00,40.00)(-5.00,10.00)
\put(10.00,0.00){\makebox(0.00,0.00)[l]{\scriptsize $#2$}}
\put(10.00,20.00){\makebox(0.00,0.00)[l]{\scriptsize $#1$}}
\put(0.00,-5.00){\makebox(0.00,0.00){$\circ$}}
\put(0.00,20.00){\makebox(0.00,0.00){$\circ$}}
\put(0.00,0.00){\line(0,1){16.00}}
\put(0.00,40.00){\line(0,-1){16.00}}
\end{picture}}
}
\def\VVV{\thicklines
{% Picture saved by xtexcad 2.4
\unitlength=.500000pt
\begin{picture}(30.00,40.00)(-5.00,10.00)
\put(10.00,20.00){\makebox(0.00,0.00)[l]{\scriptsize $1$}}
\put(0.00,20.00){\makebox(0.00,0.00){$\circ$}}
\put(0.00,0.00){\line(0,1){16.00}}
\put(0.00,40.00){\line(0,-1){16.00}}
\end{picture}}
}
\begin{document}

\bibliographystyle{smfplain}

\frontmatter

\begin{abstract}
In this first paper of a series we study various operads of natural
operations on Hochschild cochains and relationships between them.
\end{abstract}

\begin{altabstract}
Dans ce premier article d'une s\'erie nous \'etudions et comparons plusieurs
op\'erades munies d'une action naturelle sur les cochaines de Hochschild
d'une
alg\`ebre associative.
\end{altabstract}

\subjclass{Primary 55U10, secondary 55S05, 18D50.}
\keywords{Lattice path operad, Hochschild cohomology, natural operation}

\maketitle

\tableofcontents

\mainmatter

\section{Introduction}

\noindent
This paper continues the efforts 
of~\cite{markl:CZEMJ07,batanin-markl,bb} in which we studied operads naturally
acting on Hochschild cochains of an associative or symmetric Frobenius
algebra.  A~general approach to the operads of natural operations in
algebraic categories was set up in \cite{markl:CZEMJ07} and the first
breakthrough in computing the homotopy type of such an operad has been
achieved in \cite{batanin-markl}. In \cite{bb}, the same problem was
approached from a~combinatorial point of view, and a machinery which
produces operads acting on the Hochschild cochain complex in a
general categorical setting was introduced.

The constructions of~\cite{bb} have some specific features in different
categories which are important in applications.
In this first paper of a
series entitled `Operads of Natural Operations'
we begin a detailed study of these special cases.

It is very natural to start with the classical
Hochschild cochain complex of an associative algebra. This is, by far, the most
studied case. It seems to us, however, that a systematic treatment is missing despite its long history and a vast amount of
literature available. One of the motivations of this paper
was our wish to relate various approaches in literature and to
provide a uniform combinatorial language for this purpose.

Here is a short {\bf summary\/} of the paper. 

In section~\ref{lat} we describe our main combinatorial tool: the
lattice path operad $\Latnic$ and its condensation in the differential
graded setting.  This description leads to a careful treatment of
(higher) brace operations and their relationship with lattice paths in
section~\ref{weeak}.

The lattice path operad comes equipped with a filtration by complexity
\cite{bb}. The second filtration stage $\Lat 2$ is the most important
for understanding natural operations on the Hochschild cochains.  In
section~\ref{s1} we give an alternative description of $\Lat 2$ in
terms of trees, closely related to the operad of natural operations
from \cite{markl:CZEMJ07}.  Finally, in section~\ref{brr} we study various
suboperads generated by brace operations. The main result is that {\bf all
these operads have the homotopy type of a chain
model of the little disks operad\/}.  For sake of completeness we add a
brief appendix containing an overview of some categorical
constructions used in this paper.

\noindent 
{\bf Convention.}
If not stated otherwise, 
by an {\em operad\/} we mean a classical symmetric (i.e.~with the
symmetric groups acting on its components) operad in an appropriate
symmetric monoidal category which will be obvious from the context. The same
convention is applied to coloured operads, substitudes, multitensors and
functor-operads recalled in the appendix. 
     
\noindent
{\bf Acknowledgement.}
We would like to express our thanks to the referee for carefully reading
the paper and many useful remarks and suggestions.

\section{The lattice path operad}
\label{lat}

As usual, for a non-negative integer $m$, $[m]$ denotes the
ordinal $0 < \cdots < m$. We will use the same symbol also for
the category with objects $\rada 0m$ and the unique morphism $i
\to j$ if and only if $i \leq j$. The {\em
tensor product\/} $[m]\ot[n]$ is the category freely generated by the
$(m,n)$-grid which is, by definition, the oriented graph with vertices
$(i,j)$, $0\leq i \leq m$, $0 \leq j \leq n$, and one oriented edge
$(i',j') \to (i'',j'')$ if and only if $(i'',j'') = (i' +1,j')$ 
or $(i'',j'') = (i',j' +1)$. 

Let us recall, closely following~\cite{bb}, the {\em lattice path operad\/} 
and its basic properties.
For non-negative integers $\Rada k1n,l$ and $n \in \nat$ put
\[
\Latnic(\Rada k1n;l) :=
\catstar([l+1],[k_1+1]\otimes\cdots\otimes [k_n+1])
\] 
where $\otimes$ is the tensor product recalled above and 
$\catstar([l+1],[k_1+1]\otimes\cdots\otimes [k_n+1])$ the set of
functors $\varphi$ that preserve the extremal points, by which we mean that
\begin{equation}
\label{endpots}
\varphi(0) = (\rada 00)\ \mbox { and }\ \varphi(l+1) = (\rada{k_1+1}{k_n+1}).
\end{equation}

A functor $\varphi \in \Latnic(\Rada k1n;l)$ is given by a chain
of $l+1$ morphisms $\varphi(0) \to \varphi(1) \to
\cdots \to \varphi(l+1)$ in $[k_1+1]\otimes\cdots\otimes [k_n+1]$ with
$\varphi(0)$ and $\varphi(l+1)$ fulfilling~(\ref{endpots}). 
Each morphism $\varphi(i) \to \varphi(i+1)$ is 
determined by a finite oriented edge-path in the
$(k_1+1,\ldots,k_n+1)$-grid.
For $n=0$, $\Latnic(;l)$ 
consists of the unique functor from $[l+1]$ to the terminal
category with one object.

\begin{latpath}
\label{ofinka}
We will  use a slight modification of the
terminology of~\cite{bb}.
For non-negative integers $\Rada k1n \in \nonneg$ 
denote by $\cub(\Rada k1n)$ the
integral hypercube
\[
\cub(\Rada k1n) :=[k_1+1] \times \cdots \times [k_n+1] \subset 
{\mathbb Z}^{\times n}.
\]
A {\em lattice path\/} is a sequence $p = (\Rada x1N)$  
of $N: =k_1+\cdots + k_n + n + 1$ points of $\cub(\Rada k1n)$ such that
$x_{a+1}$ is, for each $0 \leq a < N$, given by
increasing exactly one coordinate of $x_a$ by~$1$.
A~{\em marking\/} of $p$ is a function $\mu : p \to \nonneg$ that
assigns to each point $x_a$ of $p$ a non-negative number $\mu_a :=
\mu(x_a)$ such that $\sum_{a=1}^N \mu_a= l$.

We can describe functors in
$\Latnic(\Rada k1n;l)$ as  marked lattice paths $(p,\mu)$ in the
hypercube  $\cub(\Rada k1n)$. 
The marking $\mu_a = \mu(x_a)$ 
represents the number of elements of the interior $\{\rada 1l\}$ of $[l+1]$
that are mapped by $\varphi$ to the $a$th lattice point $x_a$ of $p$.
We call lattice points marked by $0$ {\em unmarked\/}
points so the set of marked points equals  $\varphi(\{\rada 1l\})$.
For example, the marked lattice path
%\begin{figure}[t]
\begin{equation}
\label{vanocni_smutek}
\raisebox{-3.2em}{\rule{0pt}{0pt}}
\unitlength=1.8em
\begin{picture}(4,2)(0,1.5)
%horizontalni linky
\put(0,0){\line(1,0){4}}
\put(0,1){\line(1,0){4}}
\put(0,2){\line(1,0){4}}
\put(0,3){\line(1,0){4}}
%vertikalni linky
\put(0,0){\line(0,1){3}}
\put(1,0){\line(0,1){3}}
\put(2,0){\line(0,1){3}}
\put(3,0){\line(0,1){3}}
\put(4,0){\line(0,1){3}}
%cesta
\thicklines
\put(0,0){\makebox(0.00,0.00){$\bullet$}}
\put(0,0){\put(.1,.1){\makebox(0.00,0.00)[lb]{\scriptsize $0$}}}
\put(0,0){\vector(1,0){1}}
\put(1,0){\makebox(0.00,0.00){$\bullet$}}
\put(1,0){\put(.1,.1){\makebox(0.00,0.00)[lb]{\scriptsize $3$}}}
\put(1,0){\vector(1,0){1}}
\put(2,0){\makebox(0.00,0.00){$\bullet$}}
\put(2,0){\put(.1,.1){\makebox(0.00,0.00)[lb]{\scriptsize $1$}}}
\put(2,0){\line(0,1){1}}
\put(2,1){\makebox(0.00,0.00){$\bullet$}}
\put(2,1){\put(.1,.1){\makebox(0.00,0.00)[lb]{\scriptsize $0$}}}
\put(2,1){\vector(1,0){1}}
\put(3,1){\makebox(0.00,0.00){$\bullet$}}
\put(3,1){\put(.1,.1){\makebox(0.00,0.00)[lb]{\scriptsize $2$}}}
\put(3,1){\line(0,1){1}}
\put(3,2){\makebox(0.00,0.00){$\bullet$}}
\put(3,2){\put(.1,.1){\makebox(0.00,0.00)[lb]{\scriptsize $0$}}}
\put(3,2){\line(0,1){1}}
\put(3,3){\makebox(0.00,0.00){$\bullet$}}
\put(3,3){\put(.1,.1){\makebox(0.00,0.00)[lb]{\scriptsize $0$}}}
\put(3,3){\vector(1,0){1}}
\put(4,3){\makebox(0.00,0.00){$\bullet$}}    
\put(4,3){\put(.1,.1){\makebox(0.00,0.00)[lb]{\scriptsize $2$}}}
\end{picture}
%\end{figure}
\end{equation}
represents a functor $\varphi \in \Latnic(3,2;8)$ with
$\varphi(0)=(0,0)$, $\varphi(1)=\varphi(2)=\varphi(3)=(1,0)$, 
$\varphi(4)=(2,0)$, $\varphi(5)= \varphi(6)=(3,1)$ and 
$\varphi(7)=\varphi(8)=\varphi(9)=(4,3)$. The lattice is trivial for
$n=0$, so the unique element of $\Latnic(;l)$ is represented 
by the point marked $l$, i.e.~by
$\bullet^l$. 
\end{latpath}

\begin{definition}
\label{Vlastik}
Let $p \in \Latnic(\Rada k1n;l)$ be a lattice path. 
A point of $p$ at which $p$ changes its
direction is an {\em angle\/} of $p$. An {\em internal point\/} of $p$ is
a point that is not an angle nor an extremal point of $p$. We
denote by $\Angl(p)$ (resp.~$\INT(p)$) the set of all angles
(resp.~internal points) of~$p$.
\end{definition}

For instance,  the path in~(\ref{vanocni_smutek})
has $4$ angles, $2$ internal points, $4$ unmarked points and $1$ unmarked
internal point.

Following again~\cite{bb} closely, we denote,
for $1\leq i < j \leq n$,  by $p_{ij}$ the projection of the
path $p \in \Latnic(\Rada k1n;l)$ 
to the face $[k_i+1]\times [k_j+1]$ of $\cub(\Rada k1n)$;  let $c_{ij} :=
\#\Angl(p_{ij})$ be the number of its angles. The maximum $c(p) :=
\max\{c_{ij}\}$ is called the {\em complexity\/} of $p$. 
Let us finally denote by $\Lat c(\Rada k1n;l) \subset 
\Latnic(\Rada k1n;l)$  the subset of marked lattice 
paths of complexity $\leq c$. The case $c=2$ is
particularly interesting, because
$\Lat 2(\Rada k1n;l)$ is, by~\cite[Proposition~2.14]{bb}, 
isomorphic to the space of 
unlabeled $(l;\Rada k1n)$-trees recalled on page~\pageref{stryc}. For
convenience of the reader we
recall this isomorphism on page~\pageref{jaruskA}.

As shown in~\cite{bb}, the sets $\Latnic(\Rada k1n;l)$ and their
subsets $\Lat c(\Rada k1n;l)$, $c \geq 0$, form an
$\nonneg$-coloured operad $\Latnic$ and its sub-operads $\Lat c$.  To
simplify formulations, we will allow $c = \infty$, putting $\Lat
\infty := \Latnic$.

\begin{convention}
Since we aim to work in the category of abelian groups, we will make 
no notational difference between the sets $\Lat c(\Rada k1n;l)$ and
their linear spans.
\end{convention}

The underlying category of the coloured operad $\Latnic$ (which
coincides with the underlying category of $\Lat c$ for any $c \geq 0$)
is, by definition, the category whose objects are non-negative
integers and morphism $n \to m$ are elements of $\Latnic(n,m)$,
i.e.~non-decreasing maps $\varphi: [m+1] \to [n+1]$ preserving the
endpoints.  

By Joyal's duality~\cite{joyal:disks}, this category is
isomorphic to the (skeletal) category $\Delta$ of finite ordered
sets, i.e.~$\Latnic(n,m) = \Delta(n,m)$.
The operadic composition makes the collection $\Lat c(\Rada
\bullet 1n;\bullet)$ (with $c=\infty$ allowed) a functor
$(\Delta^\op)^{\times n} \times \Delta \to \abel$, i.e.  $n$-times
simplicial $1$-time cosimplicial Abelian group.

Morphisms in the category $\Delta$ are generated by the cofaces $d_i : [m-1]\to [m]$ given by the non-decreasing map that misses $i$, and the codegeneracies $s_i : [m+1]\to [m]$ given by the non-decreasing map that hits $i$ twice. In both cases, $0 \leq i \leq m$. Let us inspect how these generating maps act on the pieces of the operad $\Lat c$. 

\begin{simpl}
\label{simpl}
We describe the induced $r$th ($1 \leq r \leq n$) simplicial maps
\[
\pa^r_i : \Lat c(k_1,\ldots,k_{r-1},m,k_{r+1},\ldots,k_n;l)  
\to
\Lat c(k_1,\ldots,k_{r-1},m\!-\!1,k_{r+1},\ldots,k_n;l),
\]
where $m \geq 1$, $0 \leq i \leq m$, and
\[
\sigma^r_i : \Lat c(k_1,\ldots,k_{r-1},m,k_{r+1},\ldots,k_n;l) 
\to
\Lat c(k_1,\ldots,k_{r-1},m\!+\!1,k_{r+1},\ldots,k_n;l),
\]
where $0 \leq i \leq m$.
To this end, we define, for each $m \geq 1$ and $0\leq i \leq m$, 
the epimorphism of the hypercubes
\[
D^r_i
: \cub(k_1,\ldots,k_{r-1},m,k_{r+1},\ldots,k_n) 
\twoheadrightarrow \cub(k_1,\ldots,k_{r-1},m\!-\!1,k_{r+1},\ldots,k_n)
\]
by
\[
D^r_i(a_1,\ldots,a_r,\ldots,a_n) :=
\cases{(a_1,\ldots,a_r,\ldots,a_n)}{if $a_r \leq i$, and}
{(a_1,\ldots,a_r-1,\ldots,a_n)}{if $a_r > i$,}
\]
where $(a_1,\ldots,a_r,\ldots,a_n) 
\in \cub(k_1,\ldots,k_{r-1},m,k_{r+1},\ldots,k_n)$ is an arbitrary point.
In a similar fashion, the monomorphism 
\[
S^r_i
: \cub(k_1,\ldots,k_{r-1},m,k_{r+1},\ldots,k_n) 
\hookrightarrow \cub(k_1,\ldots,k_{r-1},m\!+\!1,k_{r+1},\ldots,k_n)
\]
is, for $0\leq i \leq m$, given by
\[
S^r_i(a_1,\ldots,a_r,\ldots,a_n) :=
\cases{(a_1,\ldots,a_r,\ldots,a_n)}{if $a_r \leq i$, and}
{(a_1,\ldots,a_r+1,\ldots,a_n)}{if $a_r > i$.}
\]

Let $(p,\mu)$ be a marked lattice path in 
$\cub(k_1,\ldots,k_{r-1},m,k_{r+1},\ldots,k_n)$ representing a
functor $\varphi \in \Lat
c(k_1,\ldots,k_{r-1},m,k_{r+1},\ldots,k_n;l)$. Then $\pa^r_i(\varphi)$ is
represented by the marked path $(\pa^r_i(p),\pa^r_i(\mu))$, where
$\pa^r_i(p)$  is the image $D^r_i(p)$  of $p$ in 
$\cub(k_1,\ldots,k_{r-1},m\!-\!1,k_{r+1},\ldots,k_n,l)$. The 
marking  $\pa^r_i(\mu)$ is given by $\pa^r_i(\mu)(D^r_i(x)) :=
\sum_{\tilde x}\mu(\tilde x)$, with the sum taken over all 
$\tilde x \in p$ such that $D^r_i(\tilde x) = D^r_i(x)$. A less formal
description of this marking is the following.

There are precisely two different points of $p$, say $x'$ and
$x''$, such that $D^r_i(x') = D^r_i(x'')$; let us call the remaining
points of $p$ regular. The marking of $D^r_i(x)$ is
the same as the marking of $x$ if $x$ is regular. If $x'$ and $x''$
are the two non-regular points, then the marking of the common value
$D^r_i(x')=D^r_i(x'')$ is $\mu(x') + \mu(x'')$. See
Figure~\ref{smutek} in which the operator $\pa_1^1$ contracts the
column denoted $D^1_1$ and decorates the point 
\raisebox{.1em}{\rule{.4em}{.4em}}  obtained by
identifying the point $(1,0)$ marked $3$ with the point $(2,0)$ marked $1$ by
$3+1=4$. The remaining operators act in the similar fashion.
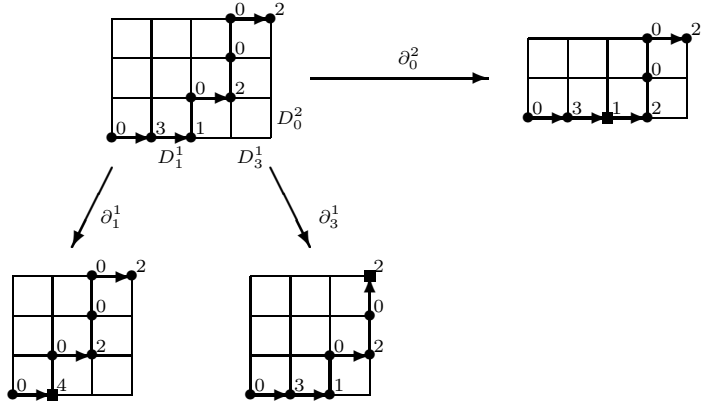
\begin{figure}[t]
\unitlength=1.5em
\begin{picture}(4,9.5)(-9.5,-6.5)
%PUVODNI OBRAZEK
\put(-3.5,0){
%horizontalni linky
\put(0,0){\line(1,0){4}}
\put(0,1){\line(1,0){4}}
\put(0,2){\line(1,0){4}}
\put(0,3){\line(1,0){4}}
%vertikalni linky
\put(0,0){\line(0,1){3}}
\put(1,0){\line(0,1){3}}
\put(2,0){\line(0,1){3}}
\put(3,0){\line(0,1){3}}
\put(4,0){\line(0,1){3}}
%cesta
\thicklines
\put(0,0){\makebox(0.00,0.00){$\bullet$}}
\put(0,0){\put(.1,.1){\makebox(0.00,0.00)[lb]{\scriptsize $0$}}}
\put(0,0){\vector(1,0){1}}
\put(1,0){\makebox(0.00,0.00){$\bullet$}}
\put(1,0){\put(.1,.1){\makebox(0.00,0.00)[lb]{\scriptsize $3$}}}
\put(1,0){\vector(1,0){1}}
\put(2,0){\makebox(0.00,0.00){$\bullet$}}
\put(2,0){\put(.1,.1){\makebox(0.00,0.00)[lb]{\scriptsize $1$}}}
\put(2,0){\line(0,1){1}}
\put(2,1){\makebox(0.00,0.00){$\bullet$}}
\put(2,1){\put(.1,.1){\makebox(0.00,0.00)[lb]{\scriptsize $0$}}}
\put(2,1){\vector(1,0){1}}
\put(3,1){\makebox(0.00,0.00){$\bullet$}}
\put(3,1){\put(.1,.1){\makebox(0.00,0.00)[lb]{\scriptsize $2$}}}
\put(3,1){\line(0,1){1}}
\put(3,2){\makebox(0.00,0.00){$\bullet$}}
\put(3,2){\put(.1,.1){\makebox(0.00,0.00)[lb]{\scriptsize $0$}}}
\put(3,2){\line(0,1){1}}
\put(3,3){\makebox(0.00,0.00){$\bullet$}}
\put(3,3){\put(.1,.1){\makebox(0.00,0.00)[lb]{\scriptsize $0$}}}
\put(3,3){\vector(1,0){1}}
\put(4,3){\makebox(0.00,0.00){$\bullet$}}    
\put(4,3){\put(.1,.1){\makebox(0.00,0.00)[lb]{\scriptsize $2$}}}
%Oznacene sloupce
\put(1.5,-.5){\makebox(0.00,0.00){\scriptsize ${D^1_1}$}}
\put(3.5,-.5){\makebox(0.00,0.00){\scriptsize ${D^1_3}$}}
\put(4.5,.5){\makebox(0.00,0.00){\scriptsize ${D^2_0}$}}
}
%kontrakce podle D^1_1
\put(-7,-6.5){
%horizontalni linky
\put(1,0){\line(1,0){3}}
\put(1,1){\line(1,0){3}}
\put(1,2){\line(1,0){3}}
\put(1,3){\line(1,0){3}}
%vertikalni linky
\put(1,0){\line(0,1){3}}
\put(2,0){\line(0,1){3}}
\put(3,0){\line(0,1){3}}
\put(4,0){\line(0,1){3}}
%cesta
\thicklines
\put(1,0){\makebox(0.00,0.00){$\bullet$}}
\put(1,0){\put(.1,.1){\makebox(0.00,0.00)[lb]{\scriptsize $0$}}}
\put(1,0){\vector(1,0){1}}
\put(2,0){\makebox(0.00,0.00){$\rule{.4em}{.4em}$}}
\put(2,0){\put(.1,.1){\makebox(0.00,0.00)[lb]{\scriptsize $4$}}}
\put(2,0){\line(0,1){1}}
\put(2,1){\makebox(0.00,0.00){$\bullet$}}
\put(2,1){\put(.1,.1){\makebox(0.00,0.00)[lb]{\scriptsize $0$}}}
\put(2,1){\vector(1,0){1}}
\put(3,1){\makebox(0.00,0.00){$\bullet$}}
\put(3,1){\put(.1,.1){\makebox(0.00,0.00)[lb]{\scriptsize $2$}}}
\put(3,1){\line(0,1){1}}
\put(3,2){\makebox(0.00,0.00){$\bullet$}}
\put(3,2){\put(.1,.1){\makebox(0.00,0.00)[lb]{\scriptsize $0$}}}
\put(3,2){\line(0,1){1}}
\put(3,3){\makebox(0.00,0.00){$\bullet$}}
\put(3,3){\put(.1,.1){\makebox(0.00,0.00)[lb]{\scriptsize $0$}}}
\put(3,3){\vector(1,0){1}}
\put(4,3){\makebox(0.00,0.00){$\bullet$}}    
\put(4,3){\put(.1,.1){\makebox(0.00,0.00)[lb]{\scriptsize $2$}}}
}
%kontrakce podle D^1_3
\put(0,-6.5){
%horizontalni linky
\put(0,0){\line(1,0){3}}
\put(0,1){\line(1,0){3}}
\put(0,2){\line(1,0){3}}
\put(0,3){\line(1,0){3}}
%vertikalni linky
\put(0,0){\line(0,1){3}}
\put(1,0){\line(0,1){3}}
\put(2,0){\line(0,1){3}}
\put(3,0){\line(0,1){3}}
%cesta
\thicklines
\put(0,0){\makebox(0.00,0.00){$\bullet$}}
\put(0,0){\put(.1,.1){\makebox(0.00,0.00)[lb]{\scriptsize $0$}}}
\put(0,0){\vector(1,0){1}}
\put(1,0){\makebox(0.00,0.00){$\bullet$}}
\put(1,0){\put(.1,.1){\makebox(0.00,0.00)[lb]{\scriptsize $3$}}}
\put(1,0){\vector(1,0){1}}
\put(2,0){\makebox(0.00,0.00){$\bullet$}}
\put(2,0){\put(.1,.1){\makebox(0.00,0.00)[lb]{\scriptsize $1$}}}
\put(2,0){\line(0,1){1}}
\put(2,1){\makebox(0.00,0.00){$\bullet$}}
\put(2,1){\put(.1,.1){\makebox(0.00,0.00)[lb]{\scriptsize $0$}}}
\put(2,1){\vector(1,0){1}}
\put(3,1){\makebox(0.00,0.00){$\bullet$}}
\put(3,1){\put(.1,.1){\makebox(0.00,0.00)[lb]{\scriptsize $2$}}}
\put(3,1){\line(0,1){1}}
\put(3,2){\makebox(0.00,0.00){$\bullet$}}
\put(3,2){\put(.1,.1){\makebox(0.00,0.00)[lb]{\scriptsize $0$}}}
\put(3,2){\vector(0,1){1}}
\put(3,3){\makebox(0.00,0.00){$\rule{.4em}{.4em}$}}
\put(3,3){\put(.1,.1){\makebox(0.00,0.00)[lb]{\scriptsize $2$}}}
}
%kontrakce podle D_0^2
\put(7,-.5){
%horizontalni linky
\put(0,1){\line(1,0){4}}
\put(0,2){\line(1,0){4}}
\put(0,3){\line(1,0){4}}
%vertikalni linky
\put(0,1){\line(0,1){2}}
\put(1,1){\line(0,1){2}}
\put(2,1){\line(0,1){2}}
\put(3,1){\line(0,1){2}}
\put(4,1){\line(0,1){2}}
%cesta
\thicklines
\put(0,1){\makebox(0.00,0.00){$\bullet$}}
\put(0,1){\put(.1,.1){\makebox(0.00,0.00)[lb]{\scriptsize $0$}}}
\put(0,1){\vector(1,0){1}}
\put(1,1){\makebox(0.00,0.00){$\bullet$}}
\put(1,1){\put(.1,.1){\makebox(0.00,0.00)[lb]{\scriptsize $3$}}}
\put(1,1){\vector(1,0){1}}
\put(2,1){\makebox(0.00,0.00){$\rule{.4em}{.4em}$}}
\put(2,1){\put(.1,.1){\makebox(0.00,0.00)[lb]{\scriptsize $1$}}}
\put(2,1){\vector(1,0){1}}
\put(3,1){\makebox(0.00,0.00){$\bullet$}}
\put(3,1){\put(.1,.1){\makebox(0.00,0.00)[lb]{\scriptsize $2$}}}
\put(3,1){\line(0,1){1}}
\put(3,2){\makebox(0.00,0.00){$\bullet$}}
\put(3,2){\put(.1,.1){\makebox(0.00,0.00)[lb]{\scriptsize $0$}}}
\put(3,2){\line(0,1){1}}
\put(3,3){\makebox(0.00,0.00){$\bullet$}}
\put(3,3){\put(.1,.1){\makebox(0.00,0.00)[lb]{\scriptsize $0$}}}
\put(3,3){\vector(1,0){1}}
\put(4,3){\makebox(0.00,0.00){$\bullet$}}    
\put(4,3){\put(.1,.1){\makebox(0.00,0.00)[lb]{\scriptsize $2$}}}
}
%popisky operatoru
\thicklines
\put(-3.5,-.75){\vector(-1,-2){1}}
\put(-3.5,-2){\makebox(0.00,0.00){\scriptsize $\pa^1_1$}}   
\put(0.5,-.75){\vector(1,-2){1}}
\put(2,-2){\makebox(0.00,0.00){\scriptsize $\pa^1_3$}}   
\put(1.5,1.5){\vector(1,0){4.5}}
\put(4,2){\makebox(0.00,0.00){\scriptsize $\pa^2_0$}}   
\end{picture}
\caption{\label{smutek}
The simplicial boundaries acting on the element
of~(\ref{vanocni_smutek}).
}
\end{figure}

To define the marked lattice path $(\sigma_i^r(p),\sigma_i^r(\mu))$
representing the degeneracy $\sigma_i^r(\varphi)$, we need to observe
that the image $S^r_i(p)$ is not a lattice path in
$\cub(k_1,\ldots,k_{r-1},m\!+\!1,k_{r+1},\ldots,k_n)$, but that it can
be made one by adding a unique `missing' lattice point $\hat x$. The
resulting lattice path is $\sigma^r_i(p)$.  The marking
$\sigma_i^r(\mu)$ is given by $\sigma_i^r(\mu)(S^t_i(x)) := \mu (x)$
for $x \in p$ while $\sigma_i^r(\mu)(\hat x) := 0$, i.e.~the newly
added point $\hat x$ is unmarked. See Figure~\ref{SmuteK} in which the
new point $\hat x$ is denoted \rule{.4em}{.4em} . Observe that $\hat
x$ is always an internal~point.
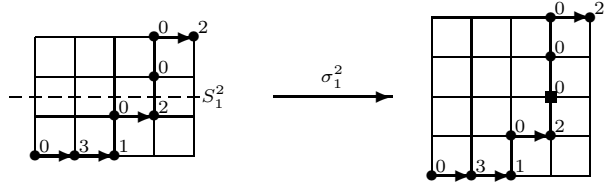
\begin{figure}[t]
\unitlength=1.5em
\begin{picture}(4,4)(-10,-.5)
%puvodni obrazek
\put(-5,0){
%horizontalni linky
\put(0,0){\line(1,0){4}}
\put(0,1){\line(1,0){4}}
\put(0,2){\line(1,0){4}}
\put(0,3){\line(1,0){4}}
%vertikalni linky
\put(0,0){\line(0,1){3}}
\put(1,0){\line(0,1){3}}
\put(2,0){\line(0,1){3}}
\put(3,0){\line(0,1){3}}
\put(4,0){\line(0,1){3}}
%cesta
\thicklines
\put(0,0){\makebox(0.00,0.00){$\bullet$}}
\put(0,0){\put(.1,.1){\makebox(0.00,0.00)[lb]{\scriptsize $0$}}}
\put(0,0){\vector(1,0){1}}
\put(1,0){\makebox(0.00,0.00){$\bullet$}}
\put(1,0){\put(.1,.1){\makebox(0.00,0.00)[lb]{\scriptsize $3$}}}
\put(1,0){\vector(1,0){1}}
\put(2,0){\makebox(0.00,0.00){$\bullet$}}
\put(2,0){\put(.1,.1){\makebox(0.00,0.00)[lb]{\scriptsize $1$}}}
\put(2,0){\line(0,1){1}}
\put(2,1){\makebox(0.00,0.00){$\bullet$}}
\put(2,1){\put(.1,.1){\makebox(0.00,0.00)[lb]{\scriptsize $0$}}}
\put(2,1){\vector(1,0){1}}
\put(3,1){\makebox(0.00,0.00){$\bullet$}}
\put(3,1){\put(.1,.1){\makebox(0.00,0.00)[lb]{\scriptsize $2$}}}
\put(3,1){\line(0,1){1}}
\put(3,2){\makebox(0.00,0.00){$\bullet$}}
\put(3,2){\put(.1,.1){\makebox(0.00,0.00)[lb]{\scriptsize $0$}}}
\put(3,2){\line(0,1){1}}
\put(3,3){\makebox(0.00,0.00){$\bullet$}}
\put(3,3){\put(.1,.1){\makebox(0.00,0.00)[lb]{\scriptsize $0$}}}
\put(3,3){\vector(1,0){1}}
\put(4,3){\makebox(0.00,0.00){$\bullet$}}    
\put(4,3){\put(.1,.1){\makebox(0.00,0.00)[lb]{\scriptsize $2$}}}
%preskrtavaci linka
\thinlines
\multiput(-.65,1.5)(.5,0){10}{\line(1,0){.3}}
\put(4.5,1.5){\makebox(0.00,0.00){\scriptsize $S^2_1$}}  
}
%nafouknuty obrazek
\put(5,-.5){
%horizontalni linky
\put(0,0){\line(1,0){4}}
\put(0,1){\line(1,0){4}}
\put(0,2){\line(1,0){4}}
\put(0,3){\line(1,0){4}}
\put(0,4){\line(1,0){4}}
%vertikalni linky
\put(0,0){\line(0,1){4}}
\put(1,0){\line(0,1){4}}
\put(2,0){\line(0,1){4}}
\put(3,0){\line(0,1){4}}
\put(4,0){\line(0,1){4}}
%cesta
\thicklines
\put(0,0){\makebox(0.00,0.00){$\bullet$}}
\put(0,0){\put(.1,.1){\makebox(0.00,0.00)[lb]{\scriptsize $0$}}}
\put(0,0){\vector(1,0){1}}
\put(1,0){\makebox(0.00,0.00){$\bullet$}}
\put(1,0){\put(.1,.1){\makebox(0.00,0.00)[lb]{\scriptsize $3$}}}
\put(1,0){\vector(1,0){1}}
\put(2,0){\makebox(0.00,0.00){$\bullet$}}
\put(2,0){\put(.1,.1){\makebox(0.00,0.00)[lb]{\scriptsize $1$}}}
\put(2,0){\line(0,1){1}}
\put(2,1){\makebox(0.00,0.00){$\bullet$}}
\put(2,1){\put(.1,.1){\makebox(0.00,0.00)[lb]{\scriptsize $0$}}}
\put(2,1){\vector(1,0){1}}
\put(3,1){\makebox(0.00,0.00){$\bullet$}}
\put(3,1){\put(.1,.1){\makebox(0.00,0.00)[lb]{\scriptsize $2$}}}
\put(3,1){\line(0,1){1}}
\put(3,3){\makebox(0.00,0.00){$\bullet$}}
\put(3,2){\put(.1,.1){\makebox(0.00,0.00)[lb]{\scriptsize $0$}}}
\put(3,2){\line(0,1){1}}
\put(3,2){\makebox(0.00,0.00){$\rule{.4em}{.4em}$}}
\put(3,3){\put(.1,.1){\makebox(0.00,0.00)[lb]{\scriptsize $0$}}}
\put(3,3){\line(0,1){1}}
\put(3,4){\makebox(0.00,0.00){$\bullet$}}
\put(3,4){\put(.1,.1){\makebox(0.00,0.00)[lb]{\scriptsize $0$}}}
\put(3,4){\vector(1,0){1}}
\put(4,4){\makebox(0.00,0.00){$\bullet$}}    
\put(4,4){\put(.1,.1){\makebox(0.00,0.00)[lb]{\scriptsize $2$}}}
}
\thicklines
\put(1,1.5){\vector(1,0){3}}
\put(2.5,2){\makebox(0.00,0.00){\scriptsize $\sigma_1^2$}}
\end{picture}
\caption{\label{SmuteK}
The operator  $\sigma_1^2$ acting on the element
of~(\ref{vanocni_smutek}).
}
\end{figure}
\end{simpl}

\begin{cosimpl}
\label{cosimpl}
We describe, for $l \geq 1$ and $0\leq i \leq l$, the boundaries 
\[
\delta^i : \Lat c(k_1,\ldots,k_n;l-1)  
\to 
\Lat c(k_1,\ldots,k_n;l)
\]
and, for $0\leq i \leq l$, the degeneracies
\[
s^i : \Lat c(k_1,\ldots,k_n,l+1) 
\to \Lat c(k_1,\ldots,k_n;l),
\]
of the  induced cosimplicial structure. Let $(p,\mu)$ be a marked path
in $\cub(k_1,\ldots,k_n;l\mp 1)$ representing a functor $\varphi \in \Lat
c(k_1,\ldots,k_n;l\mp 1)$. Neither $\delta^i$ nor $s^i$ changes
the underlying path, so $\delta^i(\varphi)$ is represented by
$(p,\delta^i(\mu))$ and $s^i(\varphi)$ by
$(p,s^i(\mu))$.

Let $\hat x := \varphi(i)$. Then the markings $\delta^i(\mu)$ and
$s^i(\mu)$ are defined by $\delta^i(\mu)(x) = s^i(\mu)(x) = \mu(x)$
for $x \not= \hat x$, while $\delta^i(\mu)(\hat x): = \mu(\hat x) + 1$
and  $s^i(\mu)(\hat x): = \mu(\hat x) - 1$.
\end{cosimpl}

\section{Weak equivalences}
\label{weeak}

\begin{unnorm}
\label{unn}
Given an $n$-simplicial cosimplicial abelian group, i.e.~a functor 
$X: {\Delta^\op}^{\times n} \times \Delta \to \abel$, denote by
$X^\bullet_* = \uTot(X(\Rada \bullet 1n;\bullet))$ the simplicial
totalization. It is a cosimplicial dg-abelian group with components
\begin{equation}
\label{posledni_den_v_Sydney}
X^\bullet_* := \bigoplus_{* = -(k_1+\cdots+k_n)} X(\Rada k1n;\bullet)
\end{equation}
bearing the degree $+1$ differential $\pa = \pa^1 + \cdots + \pa^n$, where each
$\pa^r$ is induced from the boundaries of 
the $r$th simplicial structure in the standard
manner. We also denote by $|X|^* = \oTot(\uTot(X(\Rada \bullet1n;\bullet)))$
the cosimplicial totalization of the cosimplicial dg-abelian group
$X^\bullet_*$. It is a dg-abelian group with components
\[
|X|^* = \prod_{* = l -(k_1+\cdots+k_n)} X(\Rada k1n;l) 
= \prod_{l \geq 0} \bigoplus_{l-* = k_1 + \cdots + k_n}  X(\Rada k1n;l) 
\] 
and the degree $+1$ differential $d = \delta + \pa$, where $\pa$ is as
above and $\delta$ is the standard alternating sum of the
cosimplicial boundary operators.
\end{unnorm}

According to Appendix~\ref{s0}, the dg-abelian groups $|\Lat c|(n) := 
|\Lat c(\Rada \bullet 1n;\bullet)|$ are 
the result of condensation and, therefore, assemble, for each $c \geq
0$, into a dg-operad $|\Lat c| = \{|\Lat
c|(n)\}_{n \geq 0}$. Observe that
$|\Lat 2|$  is isomorphic to the Tamarkin-Tsygan operad $\Tam$
recalled on page~\pageref{stryc}.\footnote{Whenever we refer to
sections~\ref{s1} or~\ref{brr}, we shall keep in mind that
Convention~\ref{Jak_to_dopadne_s_Jarkou?} is used in these sections.}

Let us denote, for each $n,c \geq 0$, by $\Brac c(n)$ the simplicial
totalization of the $n$-times simplicial abelian group 
$\Lat c(\Rada \bullet 1n;0)$, that is, 
\[
\Brac c^*(n) := \bigoplus_{* = -(k_1+\cdots+k_n)}\Lat c(\Rada
k1n;0), 
\] 
with the induced differential $\pa = \pa^1 + \cdots + \pa^n$.
Elements of $\Brac c(n)$ are represented by marked lattice paths $(p,0)$
with the trivial marking $\mu=0$ (all points of $p$
are unmarked). Since the trivial marking bears no information, we will
discard it from the notation.
The {\em whiskering\/} $w :  \Brac c(n) \to |\Lat c|(n)$ is defined as
\begin{equation}
\label{JaRka}
w(p) := \prod_{s \geq 0} w_s(p),
\end{equation}
where $w_s(p) \in |\Lat c|(n)$ is the sum of all marked paths, taken
with appropriate signs, obtained from
$p$ by inserting precisely $s$ new distinct internal lattice points
marked~$1$. The origin of the signs is explained in
Proposition~\ref{ww} below. 
The action of the whiskering is illustrated in Figure~\ref{Jari}. 

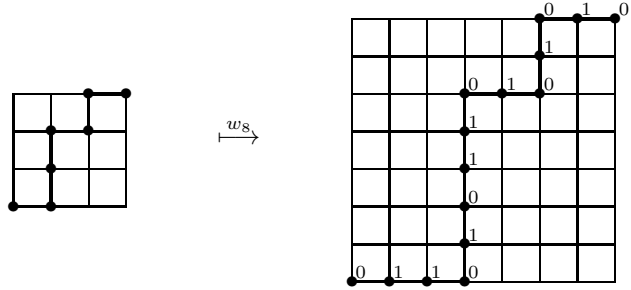
\begin{figure}
\unitlength 5mm 
\begin{picture}(7,7)(0,0)
\thinlines
\put(4,2){\line(1,0){3}}
\put(4,3){\line(1,0){3}}
\put(4,4){\line(1,0){3}}
\put(4,5){\line(1,0){3}}
\put(4,2){\line(0,1){3}}
\put(5,2){\line(0,1){3}}
\put(6,2){\line(0,1){3}}
\put(7,2){\line(0,1){3}}
\thicklines
\put(4,2){\line(1,0){1}}
\put(5,2){\line(0,1){2}}
\put(5,4){\line(1,0){1}}
\put(6,4){\line(0,1){1}}
\put(6,5){\line(1,0){1}}
\put(4,2){\makebox(0,0)[cc]{$\bullet$}}
\put(5,2){\makebox(0,0)[cc]{$\bullet$}}
\put(5,3){\makebox(0,0)[cc]{$\bullet$}}
\put(5,4){\makebox(0,0)[cc]{$\bullet$}}
\put(6,4){\makebox(0,0)[cc]{$\bullet$}}
\put(6,5){\makebox(0,0)[cc]{$\bullet$}}
\put(7,5){\makebox(0,0)[cc]{$\bullet$}}

\put(10,4){\makebox(0,0)[cc]{$\stackrel {w_8}  \longmapsto$}}

\put(8,-3){
\unitlength 5mm 
\thinlines
\put(5,3){\line(0,1){7}}
\put(6,3){\line(0,1){7}}
\put(7,3){\line(0,1){7}}
\put(8,3){\line(0,1){7}}
\put(9,3){\line(0,1){7}}
\put(10,3){\line(0,1){7}}
\put(11,3){\line(0,1){7}}
\put(12,3){\line(0,1){7}}
\put(5,3){\line(1,0){7}}
\put(5,4){\line(1,0){7}}
\put(5,5){\line(1,0){7}}
\put(5,6){\line(1,0){7}}
\put(5,7){\line(1,0){7}}
\put(5,8){\line(1,0){7}}
\put(5,9){\line(1,0){7}}
\put(5,10){\line(1,0){7}}
\thicklines
\put(5,3){\line(1,0){3}}
\put(8,3){\line(0,1){5}}
\put(8,8){\line(1,0){2}}
\put(10,8){\line(0,1){2}}
\put(10,10){\line(1,0){2}}
\put(5,3){\makebox(0,0)[cc]{$\bullet$}}
\put(6,3){\makebox(0,0)[cc]{$\bullet$}}
\put(7,3){\makebox(0,0)[cc]{$\bullet$}}
\put(8,3){\makebox(0,0)[cc]{$\bullet$}}
\put(8,4){\makebox(0,0)[cc]{$\bullet$}}
\put(8,5){\makebox(0,0)[cc]{$\bullet$}}
\put(8,6){\makebox(0,0)[cc]{$\bullet$}}
\put(8,7){\makebox(0,0)[cc]{$\bullet$}}
\put(8,8){\makebox(0,0)[cc]{$\bullet$}}
\put(9,8){\makebox(0,0)[cc]{$\bullet$}}
\put(10,8){\makebox(0,0)[cc]{$\bullet$}}
\put(10,9){\makebox(0,0)[cc]{$\bullet$}}
\put(10,10){\makebox(0,0)[cc]{$\bullet$}}
\put(11,10){\makebox(0,0)[cc]{$\bullet$}}
\put(12,10){\makebox(0,0)[cc]{$\bullet$}}
\put(0.1,0.1){
\put(5,3){\makebox(0,0)[lb]{\scriptsize$0$}}
\put(6,3){\makebox(0,0)[lb]{\scriptsize$1$}}
\put(7,3){\makebox(0,0)[lb]{\scriptsize$1$}}
\put(8,3){\makebox(0,0)[lb]{\scriptsize$0$}}
\put(8,4){\makebox(0,0)[lb]{\scriptsize$1$}}
\put(8,5){\makebox(0,0)[lb]{\scriptsize$0$}}
\put(8,6){\makebox(0,0)[lb]{\scriptsize$1$}}
\put(8,7){\makebox(0,0)[lb]{\scriptsize$1$}}
\put(8,8){\makebox(0,0)[lb]{\scriptsize$0$}}
\put(9,8){\makebox(0,0)[lb]{\scriptsize$1$}}
\put(10,8){\makebox(0,0)[lb]{\scriptsize$0$}}
\put(10,9){\makebox(0,0)[lb]{\scriptsize$1$}}
\put(10,10){\makebox(0,0)[lb]{\scriptsize$0$}}
\put(11,10){\makebox(0,0)[lb]{\scriptsize$1$}}
\put(12,10){\makebox(0,0)[lb]{\scriptsize$0$}}
}
}
\end{picture}  
\caption{
\label{Jari}
An element $p \in \Brac c(2)$ (left) and one of the terms in
$w_8(p) \in  |\Lat c|(2)$ (right). 
The newly added internal points are marked by $1$.} 
\end{figure}

For $p' \in \Lat c(\Rada a1n;0)$, $p'' \in \Lat c(\Rada b1m;0)$ and $1
\leq i \leq n$ define
\begin{equation}
\label{pozitri_domu}
p'\circ_i p'' := p' \circ_i  w_{a_i}(p'') \in \hskip -.2em
\bigoplus_{b'_1 + \cdots + b'_m = b_1 + \cdots + b_m + a_i} \hskip -3.2em
\Lat c(a_1,\sqzld,a_{i-1},{b'}_1,\sqzld,{b'}_m,a_{i+1},\sqzld,a_n;0) 
\end{equation}
where $w_{a_i}(p'')$ is the whiskering of the lattice path $p''$ by 
$a_i$ points and $\circ_i$ in the
right hand side is the operadic composition in the coloured operad $\Lat c$.
By linearity,~(\ref{pozitri_domu}) extends to 
the operation $\circ_i : \Brac c(n)
\otimes \Brac c(m) \to \Brac c(m+n-1)$.

\begin{proposition}
\label{ww}
Operations $\circ_i$ above make the collection $\Brac c =
\{\Brac c(n)\}_{n \geq 1}$ a dg-operad. The signs in~(\ref{JaRka}) can
be chosen such that the map $w : \Brac c
\hookrightarrow |\Lat c|$ is an inclusion of dg-operads.
\end{proposition}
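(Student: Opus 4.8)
The plan is to prove the second assertion first and to deduce the first one from it. As a graded abelian group, $|\Lat c|(n)=\prod_{l\ge0}\bigoplus_{l-*=k_1+\cdots+k_n}\Lat c(\Rada k1n;l)$ by~\ref{unn}, so $\Brac c(n)$ is exactly its colour-$0$ summand $\bigoplus_{\vec k}\Lat c(\vec k;0)$; the inclusion of that summand is, however, not a chain map, because in $d=\delta+\pa$ the cosimplicial part $\delta$ leaves colour $0$. The whiskering $w$ is a signed perturbation of this inclusion whose colour-$0$ component is $w_0(p)=p$, so $w$ is injective for every choice of signs in~(\ref{JaRka}). The operadic unit of $\Brac c$ is the degree-$0$ generator of $\Brac c(1)=\bigoplus_{k\ge0}\Lat c(k;0)$, represented by the path $0<1$ in $[1]$ with empty marking, and $\Sigma_n$ acts on $\Brac c(n)$ by permuting the $n$ lattice directions exactly as on $\Lat c$, with $w$ manifestly equivariant. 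Consequently, once $w$ is shown, for a suitable choice of signs, to be a chain map that commutes with the operadic compositions~(\ref{pozitri_domu}) and preserves the unit, the collection $\Brac c$ inherits all the dg-operad axioms from the dg-operad $|\Lat c|$ of Appendix~\ref{s0} along the monomorphism $w$, proving both assertions at once.

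To make $w$ a chain map one must pin down the signs. A term of $w_s(p)$ is a marked path obtained from $p$ by selecting interior insertion slots and placing $s$ new points marked $1$; to such a term I would attach the sign $(-1)^{\epsilon}$, where $\epsilon$ depends only on the relative order along the path of the inserted points and of the original interior points of $p$. Expanding $d(w(p))=(\delta+\pa)\bigl(\prod_{s\ge0}w_s(p)\bigr)$ with the formulas of~\ref{simpl} and~\ref{cosimpl} and sorting the summands, the contractions acting only on original points assemble into $w_s(\pa p)$, while the contractions that swallow a newly inserted point cancel, in a telescoping fashion, against the cofaces $\delta^i$ (which raise a marking by $1$ and so feed into colour $s+1$): collapsing a column through two new points matches a coface at an interior vertex, and collapses involving an extremal vertex match the extremal cofaces. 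Summing over $s$ yields $d\circ w=w\circ\pa$. Checking that one sign rule renders all these cancellations simultaneously valid is the technical core of the statement, and it is what dictates the precise shape of $\epsilon$.

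That $w$ respects operadic composition rests on the combinatorial fact that whiskering is itself compatible with the operadic composition $\circ_i$ of the coloured operad $\Lat c$: inserting new internal points into $p'\circ_i p''$ coincides, term by term and with matching signs, with first composing and then reapportioning the inserted points between the two factors, the plugged-in factor being enlarged accordingly. Granting this, the component of $w(p')\circ_i w(p'')$ for which the $i$-th input of a whiskering of $p'$ carries colour $a_i'$ forces that factor to be $w_{a_i'}(p'')$, and the fact just stated identifies the resulting sum with $w$ applied to $p'\circ_i w_{a_i}(p'')$; by definition~(\ref{pozitri_domu}) this is $w(p'\circ_i p'')$. An analogous but easier comparison shows that $w$ sends the unit to the unit. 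Hence $w$ commutes with the $\circ_i$, the unit and the symmetric group actions, and is an inclusion of dg-operads; together with the previous paragraph this also establishes that $\Brac c$ is a dg-operad.

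I expect the only real obstacle to be the signs. Everything else is an unwinding of the explicit descriptions of the simplicial and cosimplicial operators in~\ref{simpl} and~\ref{cosimpl} and of the operadic composition of $\Lat c$. What requires care is exhibiting a single sign convention for~(\ref{JaRka}) under which both the telescoping cancellations in $d\circ w=w\circ\pa$ and the summand-by-summand matching in $w(p'\circ_i p'')=w(p')\circ_i w(p'')$ come out right.
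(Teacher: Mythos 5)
Your overall architecture is sound: $w$ is injective because its colour-$0$ component is the identity, so once $w$ is known to intertwine the differentials and the compositions~(\ref{pozitri_domu}), the dg-operad axioms for $\Brac c$ are inherited from $|\Lat c|$, and both assertions follow together. The telescoping picture you describe for $d\circ w=w\circ\pa$ --- contractions touching only original points reassembling into $w_s(\pa p)$, contractions swallowing an inserted point cancelling against the cofaces $\delta^i$ --- is also the right combinatorial mechanism.

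The problem is that the proposal stops exactly where the proposition begins. The second assertion is an existence statement about the signs, and you never exhibit them: you posit a sign $(-1)^{\epsilon}$ ``depending only on the relative order along the path of the inserted points and of the original interior points,'' and then write that checking that one such rule makes all the cancellations in $d\circ w=w\circ\pa$ \emph{and} all the matchings in $w(p'\circ_i p'')=w(p')\circ_i w(p'')$ come out simultaneously is ``the technical core of the statement.'' That core is left undone, and the compatibility of whiskering with the coloured-operad composition of $\Lat c$ is likewise only ``granted.'' As written, the argument establishes the conditional ``if a consistent sign rule exists, then $\Brac c$ is a dg-operad and $w$ is an inclusion of dg-operads,'' not the proposition itself. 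The authors face the same issue and explicitly decline to carry out the inductive sign determination, calling it clumsy and lengthy; instead they give a conceptual specification of the signs: embed $|\Lat c|$ into the coendomorphism operad of chains on a standard simplex of large dimension (cf.~\cite[Remark~2.20]{bb}) and \emph{define} the signs by requiring that $w$ induce, through the isomorphism of Proposition~\ref{Pozitri_prileti_Jaruska.} with the surjection operad, the Berger--Fresse action with its known sign convention. That device converts the existence claim into a citation of established results. To close your gap you would need either to do the same, or to actually write down $\epsilon$ and verify both compatibilities --- which is precisely the computation the paper avoids.
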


\begin{proof}
The first part of the proposition can be verified directly.  There is
an inductive procedure to fix the signs in~(\ref{JaRka}), but we
decided not to include this clumsy and lengthy calculation here.  A
conceptual way to get the signs is to embed the dg-operad $|\Lat c|$
into the coendomorphism operad of chains on the standard simplex of 
a~sufficiently large dimension, cf.~\cite[Remark~2.20]{bb}, and to
require that the whiskering $w:\Brac c \to |\Lat c|$ induces, via the
isomorphism of Proposition~\ref{Pozitri_prileti_Jaruska.}  below, the
action of the surjection operad, with the sign convention
of~\cite[Section~2.2]{berger-fresse}.
\end{proof}

\vskip .3em
\noindent 
{\bf Remark.} 
One of the main advantages of the `operadic' sign
convention (see~\ref{Je_zdrojem_smutku.}) which we use in sections~\ref{s1}
or~\ref{brr} is that in the corresponding whiskering
formula~(\ref{Syd}) all terms, quite miracously, appear with the $+1$-signs.
\vskip .3em

So the operad structure of $\Brac c$ is induced by the operad
structure of $|\Lat c|$ and the whiskering map. Notice that
$\Brac 2$ is the brace operad $\Br$ recalled on page~\pageref{brr} and
the map $w : \Brac 2 \to |\Lat 2|$ the whiskering defined
in~(\ref{Syd}). Proposition~\ref{ww} therefore generalizes
Proposition~\ref{whisk}.

\begin{norm}
Let $X(\Rada \bullet 1n;\bullet)$ 
be an $n$-simplicial cosimplicial abelian group as in~\ref{unn}.
We will need also the traditional $n$-simplicial  
{\em normalized\/} totalization, or simplicial normalization for
short, denoted  $\overline X^\bullet_* \!=\! \uN(X(\Rada \bullet
1n;\bullet))$, obtained from the un-normalized 
totalization~(\ref{posledni_den_v_Sydney}) by modding out the images 
of simplicial degeneracies.
We then denote by $|\overline X|^* = \oN(\uN(X(\Rada \bullet1n;\bullet)))$
the normalized cosimplicial totalization of the cosimplicial dg-abelian group
$\overline X^\bullet_*$. It is the intersection of the kernels of
cosimplicial degeneracies in the un-normalized cosimplicial
totalization of $\overline X^\bullet_*$.  
As argued in~\cite{bb}, the $n$-simplicial cosimplicial normalization   
$|\nLat c|$ of the lattice path operad $\Lat c$ is a dg-operad.  
\end{norm}

Let us denote, for each $n,c \geq 0$, by $\nBrac c(n) = 
\uN(\Lat c(\Rada \bullet 1n;0))$ the
simplicial normalization 
of the $n$-simplicial abelian group 
$\Lat c(\Rada \bullet 1n;0)$, with the induced differential.
The explicit description of the simplicial structure in~\ref{simpl}
makes it obvious that
elements of $\nBrac c(n)$ are represented by (unmarked) lattice 
paths with no internal points.

One defines the operadic composition on $\nBrac c =
\{\nBrac c(n)\}_{n \geq 0}$ and the whiskering $w : \nBrac c
\hookrightarrow |\nLat c|$ by the same formulas as in the
un-normalized case.  The operad $\nBrac 2$ is the normalized brace
operad $\NBr$ recalled on page~\pageref{brr-norm}.  We leave as an exercise
to verify that $\nBrac 1$ is the operad for unital associative
algebras and $\nBrac 0$ the operad whose `algebras' are abelian
groups with a distinguished point.

\begin{proposition}
\label{Pozitri_prileti_Jaruska.}
The operads $\nBrac c$ are isomorphic to the suboperads $F_c{\mathcal X}$ of the
{\em surjection operad\/} ${\mathcal X}$ introduced
in~\cite[1.6.2]{berger-fresse}, resp.~the suboperads ${\mathcal
S}_c$ of the {\em sequence operad\/} ${\mathcal S}$ introduced
in~\cite[Definition~3.2]{mcclure-smith:JAMS03}. 
\end{proposition}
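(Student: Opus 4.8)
The plan is to construct the isomorphism $\nBrac c(n) \cong F_c\mathcal{X}(n)$ explicitly by identifying the combinatorial data on both sides. First I would recall that an element of the surjection operad $\mathcal{X}(n)$ of \cite{berger-fresse} in simplicial degree $d$ is a (nondegenerate) surjection $u : \{1,\dots,d+n\} \twoheadrightarrow \{1,\dots,n\}$ with no two consecutive values equal, and that the filtration $F_c\mathcal{X}$ is by the number of ``alternations'' between any pair of values $i \neq j$. On the lattice-path side, by \ref{simpl} an element of $\nBrac c(n)$ is an unmarked lattice path $p$ in some hypercube $\cub(k_1,\dots,k_n)$ with no internal points, hence with $*=-(k_1+\cdots+k_n)$; the condition ``no internal points'' means every non-extremal lattice point of $p$ is an angle. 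The key observation is that such a path is determined by reading off, step by step, \emph{which coordinate is being incremented}: this produces a word in the letters $\{1,\dots,n\}$ of length $k_1+\cdots+k_n+n$ in which letter $r$ occurs exactly $k_r+1$ times, and the ``no internal point'' condition forces precisely that no letter is immediately repeated — i.e.\ exactly the data of a nondegenerate surjection after the standard reindexing $d+n = k_1+\cdots+k_n+n$, $d = k_1+\cdots+k_n$.

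Second, I would check that this bijection is compatible with the simplicial structure, so that it descends to (or rather is already defined on) the normalized totalizations and matches differentials: the $r$th simplicial face $\pa^r_i$ of \ref{simpl} contracts the $i$th ``step in direction $r$,'' which under the dictionary is exactly the operation on surjections that deletes the appropriate occurrence of the letter $r$ (merging neighbours, with the nondegeneracy/normalization killing the terms where a repeat is created) — this is the face operator of the surjection complex. Hence the alternating sums agree and we get an isomorphism of complexes $\nBrac c(n) \cong \mathcal{X}(n)$ before imposing the filtration. Third, I would verify that the complexity filtration corresponds to the Berger–Fresse filtration: by definition $c(p) = \max_{i<j} \#\Angl(p_{ij})$, and the projection $p_{ij}$ to the $(k_i+1,k_j+1)$-face is obtained from the word by erasing all letters $\neq i,j$; the number of angles of that planar staircase path is exactly the number of $i$–$j$ alternations of the word, which is the quantity defining $F_c$. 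So complexity $\le c$ matches the Berger–Fresse filtration degree $\le c$, giving $\nBrac c \cong F_c\mathcal{X}$ as graded objects.

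The remaining and most delicate step is the \emph{operadic} compatibility: one must show the composition on $\nBrac c$ defined via whiskering (formula \eqref{pozitri_domu}, transported to the normalized setting) goes over to the composition of surjections. Here I would argue that whiskering $p''$ by $a_i$ points and then performing the coloured-operad composition $\circ_i$ in $\Lat c$ is, in the word picture, exactly the substitution of the word for $p''$ into the $i$th family of letters of the word for $p'$ — the $a_i$ whiskering points being what accommodates the $a_i+1$ occurrences of the letter $i$, and the alternating signs in \eqref{JaRka} being engineered (see the proof of Proposition~\ref{ww}) precisely to match the Berger–Fresse sign convention of \cite[Section~2.2]{berger-fresse}. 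I expect the sign bookkeeping in this last step to be the main obstacle; the cleanest route, already hinted at in the proof of Proposition~\ref{ww}, is to pin down both sides simultaneously through their common action on chains of a high-dimensional simplex, so that the identification of operad structures is forced rather than checked term by term. The parallel statement for the sequence operad $\mathcal{S}$ of \cite{mcclure-smith:JAMS03} then follows from the known isomorphism between $\mathcal{S}$ and $\mathcal{X}$ together with the matching of filtrations, which under our dictionary is again the count of pairwise alternations.
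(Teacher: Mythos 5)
Your proposal is correct and follows essentially the same route as the paper: the paper's proof consists precisely of the dictionary you describe in your first paragraph (reading off which coordinate each lattice step increments, with $k_i = \# u^{-1}(i)-1$), stated there as the inverse map $u \mapsto \varphi_u$ from non-degenerate surjections to paths without internal points. The paper treats the compatibility with the differential, the complexity filtration and the operad structure as obvious, so your additional verifications merely make explicit what its one-paragraph proof takes for granted.
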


\begin{proof}
We rely on the terminology of~\cite[1.6.2]{berger-fresse}.  A
non-degenerate surjection $u: \{\rada 1m\} \to \{\rada 1n\}$, $m \geq
n$, in $F_c{\mathcal X}(n)$ induces a lattice path $\varphi_u$
representing an element of $\nBrac c(n)$ as follows. For $1 \leq i
\leq n$ denote by $d_i \in {\mathbb Z}^{\times n}$ the vector $
(0,\ldots,1,\ldots,0)$ with $1$ at the $i$th position, and $k_i := \#
u^{-1}(i)-1$. Then $\varphi_u$ is the path in the grid $[k_1+1]\ot
\cdots \ot [k_n+1]$ that starts at the `lower left corner' $(\rada
00)$, advances by $d_{u(1)}$, then by $d_{u(2)}$, etc., and finally by
$d_{u(m)}$. It is obvious that the correspondence $u \mapsto
\varphi_u$ is one-to-one.
\end{proof}

The following statement follows from~\cite[Examples~3.10(c)]{bb} 
and~\cite[Section~1.2]{berger-fresse}.

\begin{proposition}
\label{nww}
The whiskering $w : \nBrac c \hookrightarrow |\nLat c|$ is an 
inclusion of dg-operads.
\end{proposition}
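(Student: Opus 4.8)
The plan is to follow the strategy that was already used for the unnormalized version in Proposition~\ref{ww}, but now invoking the citations explicitly so that no clumsy inductive sign computation is needed. First I would recall that by Proposition~\ref{Pozitri_prileti_Jaruska.} we have an isomorphism of collections $\nBrac c \cong F_c{\mathcal X}$ with the suboperad of the surjection operad, so that the dg-operad structure on $\nBrac c$ transported along this isomorphism is precisely that of $F_c{\mathcal X}$ with the sign conventions of~\cite[Section~2.2]{berger-fresse}; in particular the whiskering formula on $\nBrac c$ is, up to sign, the operation $u \mapsto \varphi_u$ composed with inserting internal points, which matches the normalized analogue of~(\ref{Syd}).

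Next I would identify $|\nLat c|$ with (a suboperad of) the normalized totalization appearing in~\cite[Examples~3.10(c)]{bb}. The reference~\cite[Examples~3.10(c)]{bb} exhibits exactly the comparison map between the normalized lattice path totalization and the surjection/Barratt-Eccles type operad; combined with the compatibility of the $F_c$-filtration with condensation (Appendix~\ref{s0}), this shows that the whiskering map $w$ lands in $|\nLat c|$ and is a morphism of dg-operads. The only genuinely new point beyond Proposition~\ref{ww} is that everything must be checked modulo simplicial degeneracies and inside the kernel of cosimplicial degeneracies; but the explicit description in~\ref{simpl} shows that whiskering inserts only \emph{internal} points marked $1$, hence the image of $w$ consists of lattice paths which, after normalization, are non-degenerate, so $w$ is well-defined on $\nBrac c$ and factors through the normalized cosimplicial totalization $|\nLat c|$. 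Injectivity is immediate since the underlying path $p$ can be recovered from $w(p)$ as its unique summand with no internal points marked $1$ (equivalently, the $w_0$-component).

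Finally I would assemble these observations: $w$ is a map of collections by construction, it is a dg-map because it agrees with the operad inclusion $F_c{\mathcal X} \hookrightarrow |\nLat c|$ provided by~\cite[Examples~3.10(c)]{bb} under the identification of Proposition~\ref{Pozitri_prileti_Jaruska.} together with~\cite[Section~1.2]{berger-fresse}, and it is an inclusion by the recovery argument above. This proves the statement.

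I expect the main obstacle to be purely bookkeeping rather than conceptual: one must check that the sign conventions used to define the operadic composition on $\nBrac c$ via~(\ref{pozitri_domu}) (inherited from $|\Lat c|$) coincide, after passing to the normalization, with the Barratt–Eccles/surjection-operad signs of~\cite[Section~1.2, 2.2]{berger-fresse} that are implicit in~\cite[Examples~3.10(c)]{bb}. Once that sign match is granted — which is exactly what~\cite{bb} establishes — the rest is formal.
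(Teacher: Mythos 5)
Your proposal follows essentially the same route as the paper, which gives no separate proof but simply states that the proposition follows from~\cite[Examples~3.10(c)]{bb} and~\cite[Section~1.2]{berger-fresse}, implicitly via the identification of $\nBrac c$ with the suboperad $F_c{\mathcal X}$ of the surjection operad from Proposition~\ref{Pozitri_prileti_Jaruska.}. Your additional remarks on well-definedness modulo degeneracies and on recovering $p$ from the $w_0$-component are correct elaborations of what the paper leaves to the cited references.
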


We will need also the following statement.

\begin{proposition}
\label{projj}
The natural projection $\pi : \Brac c \epi \nBrac c$ to the
normalization is an epimorphism of dg-operads for each $c \geq 0$.
\end{proposition}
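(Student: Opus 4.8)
The plan is to check that the natural projection $\pi$ is a morphism of dg-operads and that it is surjective on each arity, both following almost formally from the way the operad structures were set up. First I would recall that, by construction, $\Brac c(n)$ is the un-normalized simplicial totalization of $\Lat c(\Rada \bullet 1n;0)$ and $\nBrac c(n)$ its normalized totalization, so there is an arity-wise projection $\pi(n): \Brac c(n) \epi \nBrac c(n)$ which is a chain map and is surjective because the normalization is a quotient complex; this is the classical Dold--Kan fact that for any simplicial abelian group the inclusion of the normalized chains is a split quasi-isomorphism, and here we only need the (obvious) surjectivity of the quotient map. It remains to verify compatibility with the $\circ_i$.

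Second, I would examine the definition~(\ref{pozitri_domu}) of $\circ_i$ on $\Brac c$ and the identical formula on $\nBrac c$. In both cases $p' \circ_i p'' := p' \circ_i w_{a_i}(p'')$, where $w_{a_i}$ inserts $a_i$ new internal lattice points and the outer $\circ_i$ is the operadic composition in the coloured operad $\Lat c$ (resp.\ $\nLat c$). The key observation is that the projection onto the normalization is compatible with the coloured operadic composition of $\Lat c$ — this is part of what it means for $|\nLat c|$ to be a dg-operad, as recalled from~\cite{bb} just before the statement — and that the whiskering maps on $\Brac c$ and $\nBrac c$ are defined "by the same formulas", so that $\pi \circ w = w \circ \pi$ at the level of whiskered paths modulo degeneracies. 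Concretely, a degenerate lattice path (one with a repeated vertex, i.e.\ in the image of a simplicial degeneracy $\sigma^r_j$) stays degenerate after whiskering and after composing with another path in the first slot, because whiskering only inserts \emph{internal} points and hence does not destroy the repeated edge recorded by the degeneracy; likewise operadic substitution of a path into a degenerate one yields a degenerate result. Therefore the composite $\circ_i$ descends to the quotient, which is exactly the statement that $\pi(m+n-1)\bigl(p'\circ_i p''\bigr) = \pi(n)(p')\circ_i \pi(m)(p'')$.

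Third, I would note that $\pi$ visibly preserves the operadic unit (the length-one path with trivial marking is non-degenerate) and commutes with the symmetric group actions, which act by permuting colours/slots and hence commute with both the normalization quotient and the whiskering. Assembling these checks gives that $\pi$ is an epimorphism of dg-operads for every $c \geq 0$, including $c = \infty$.

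The main obstacle, such as it is, is bookkeeping rather than conceptual: one must make sure that passing from $\Lat c$ to its normalization is compatible not merely with the simplicial differentials but with the full coloured operadic structure used in~(\ref{pozitri_domu}), and that the sign conventions fixed in Proposition~\ref{ww} for the whiskering on $\Brac c$ restrict to the ones used for $\nBrac c$ (Proposition~\ref{nww}); once one grants, as we may, that $|\nLat c|$ is a dg-operad and that $w$ on $\nBrac c$ is defined by the same signed formula as on $\Brac c$, the compatibility of $\pi$ with $\circ_i$ is immediate and the rest is routine.
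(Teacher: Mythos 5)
Your proposal is correct and follows essentially the same route as the paper: the paper's proof likewise rests on the observation that the composition~(\ref{pozitri_domu}) preserves the number of internal points of lattice paths, so that the degenerate subcollection (spanned by paths with at least one internal point --- the feature you describe as a ``repeated vertex'') is a dg-operadic ideal, whence $\pi$ is an operad map, with compatibility with the differentials following from standard properties of simplicial normalization. The only cosmetic difference is that the paper phrases the key combinatorial fact as additivity of the internal-point count under $\circ_i$ rather than as ``degenerate stays degenerate under whiskering and substitution.''
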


\begin{proof}
It is almost obvious that the operadic composition in $\Brac c$
preserves the number of internal points, that is, if $p'$
(resp.~$p''$) is a lattice path with $a'$ (resp.~$a''$) internal
points, then $p'\circ_i p''$ is, for each $i$ for which this
expression makes sense, a~linear combination of lattice paths with $a'
+ a''$ internal points.  This implies that the degenerate subspace
$\Deg(\Brac c)$ of $\Brac c$ which is the subcollection spanned by
lattice paths with at least one internal point, form a dg-operadic
ideal in $\Brac c$, so the projection $\pi: \Brac c \epi \Brac c/
\Deg(\Brac c) = \nBrac c$ is an operad map. The fact that $\pi$
commutes with the differentials follows from the standard properties
of the simplicial normalizations.
\end{proof}

Let $\HBrac c = \{\HBrac c(n)\}_{n \geq 0}$ 
be the subcollection of $\Brac c$ such that
$\HBrac c (n)\subset \Brac c(n)$ is spanned by
paths with no internal points, for $n \geq 1$, and $\HBrac c (0) := 0$.

\begin{proposition}
The collection $\HBrac c$ is a (non-dg) suboperad of $\Brac c$ for any
$c\geq 0$. It is dg-closed if and only if $c \leq 2$.
\end{proposition}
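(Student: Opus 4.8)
\emph{First assertion.} I work throughout with the simplicial description of~\ref{simpl}; the only invariant that matters here is $\#\INT(p)$, the number of internal points of a lattice path $p$. By the computation in the proof of Proposition~\ref{projj}, the operadic composition in $\Brac c$ preserves the number of internal points, so it carries pairs of internal-point-free paths to combinations of such paths. The operad unit of $\Brac c$ is the path $0\to1$ of $\cub(0)$ (the only element of $\Brac c(1)$ in degree $0$), which has no internal point, and the $\Sigma_n$-action merely permutes the coordinate directions, hence preserves $\#\INT$. As no composite of positive-arity elements can have arity $0$, putting $\HBrac c(0):=0$ is harmless, and $\HBrac c$ is a suboperad for every $c\ge0$.

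\emph{Action of $\pa^r_i$ on an internal-point-free path.} This is the technical core of the second assertion, which I would isolate as a lemma. Fix $p$ with $\INT(p)=\emptyset$ and with $k_r\ge1$, so that $\pa^r$ acts at all. Monotonicity of $p$ shows it has a unique step $\xi$ in direction $r$ crossing the $i|{i+1}$ hyperplane, and that its endpoints $x',x''$ are the only two points of $p$ identified by $D^r_i$; hence $\pa^r_i(p)$ is a single lattice path, and the only point whose local type can change under $D^r_i$ is the merge point $y:=D^r_i(x')=D^r_i(x'')$. Since $\INT(p)=\emptyset$, the step of $p$ entering $x'$ and the step leaving $x''$, when present, have directions $e\ne r$ and $f\ne r$; a direct check then shows that $y$ is an internal point of $\pa^r_i(p)$ if and only if both these steps are present and $e=f$, i.e.\ if and only if $p$ has three consecutive steps in directions $e,r,e$ with $e\ne r$. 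The same bookkeeping (removing the single step $\xi$ from each face projection cannot increase the number of turns) shows $\pa^r_i$ never raises the complexity.

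\emph{$\HBrac c$ is dg-closed when $c\le2$.} Let $p\in\HBrac c(n)$ and suppose some summand $\pa^r_i(p)$ of $\pa p$ had an internal point; then $k_r\ge1$, and by the lemma $p$ has three consecutive steps in directions $e,r,e$ with $e\ne r$. Project $p$ to the $(e,r)$-face: this subword survives in $p_{er}$ and already contributes two angles, so, because $c(p)\le2$, the word formed by all the $e$- and $r$-steps of $p$ must be $e^{a}\,r\,e^{c}$ with $a,c\ge1$. In particular $p$ has a single step in direction $r$, that is $k_r=0$ — a contradiction. Hence every summand of $\pa p$ is internal-point-free, and since $\pa^r_i$ does not raise the complexity, $\pa p\in\HBrac c(n)$. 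For $n\le1$ there is nothing to prove: $\HBrac c(1)=\Lat c(0;0)$, on which $\pa$ vanishes, and $\HBrac c(0)=0$. Thus $\HBrac c$ is dg-closed for $c\le2$.

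\emph{$\HBrac c$ fails to be dg-closed when $c\ge3$.} Take $n=2$ and the full staircase
\[
p = (0,0)\to(1,0)\to(1,1)\to(2,1)\to(2,2) \in \Lat c(1,1;0),
\]
which has three angles — so it lies in $\Lat c$ as soon as $c\ge3$ — and no internal point, hence $p\in\HBrac c(2)$. Its second step in direction $1$ crosses the $1|2$ hyperplane and is flanked by two steps in direction $2$, so by the lemma $\pa^1_1(p)$, which equals the path $(0,0)\to(1,0)\to(1,1)\to(1,2)$, has an internal point at $(1,1)$. The only other summand of $\pa p$ lying in $\Lat c(0,1;0)$ is $\pa^1_0(p)=(0,0)\to(0,1)\to(1,1)\to(1,2)$, a distinct lattice path, so the bad term cannot cancel and $\pa p\notin\HBrac c$; this works for every $c\ge3$. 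I expect the one genuinely technical point to be the lemma of the second paragraph — checking that $D^r_i$ identifies exactly the pair $x',x''$ and that no point other than $y$ changes local type, and then extracting the normal form $e^{a}\,r\,e^{c}$ — after which the bound $c\le2$ does all the work.
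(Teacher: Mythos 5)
Your proof is correct, and on the substantive half of the statement it takes a genuinely different route from the paper. For the first assertion both arguments are the same: closure under $\circ_i$ follows from the internal-point count being additive under composition, as recorded in the proof of Proposition~\ref{projj}. For $c\geq 3$ you and the paper use essentially the same counterexample, a three-angle staircase in $\cub(1,1)$ whose middle step is contracted by a simplicial face map (the paper's staircase starts with a vertical step and uses $\pa^1_0$, yours starts horizontally and uses $\pa^1_1$); you additionally verify that the offending summand cannot cancel against the other term of $\pa p$ landing in $\Lat c(0,1;0)$, a point the paper leaves implicit. The real divergence is in the direction ``dg-closed for $c\leq 2$'': the paper handles $c=2$ by identifying $\HBrac 2$ with the non-unital brace operad $\HBr$ and invoking its known dg-suboperad structure inside $\Br=\Brac 2$ (Proposition~\ref{JaRkA}, the tree description of the differential, and McClure--Smith), with $c=0,1$ declared obvious. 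You instead prove a local lemma entirely in the lattice-path formalism of~\ref{simpl} --- that $\pa^r_i$ applied to an internal-point-free path creates an internal point exactly when the contracted $r$-step is flanked by two steps in a common direction $e\neq r$ --- and then observe that such a configuration already forces two angles in the $(e,r)$-projection, so the complexity bound pins that projected word down to $e^a\,r\,e^b$ and hence forces $k_r=0$, contradicting $k_r\geq 1$. Your lemma is sound: the monotonicity argument showing $D^r_i$ identifies only the pair $x',x''$, and the preservation of adjacent step directions at all other points, both check out. What each approach buys: the paper's is shorter given the brace machinery it has already built, and it explains \emph{why} $c=2$ is the threshold (it is the brace/tree regime); yours is self-contained, treats $c=0,1,2$ uniformly, and makes the role of the complexity bound completely explicit without any detour through trees.
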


\begin{proof}
It follows from the property stated in the proof of
Proposition~\ref{projj} that the subcollection $\HBrac c$ is closed
under the operad structure of $\Brac c$ for an arbitrary $c \geq
0$. It remains to prove that $\HBrac c$ is closed under the
action of the differential if and only if $c \leq 2$. Let us prove
first that it is dg-closed for $c \leq 2$.

For $c=2$ this follows from the fact that $\HBrac 2 = \HBr$ is a
dg-suboperad of $\Brac 2 = \Br$, see Proposition~\ref{JaRkA} and the
description of the dg-operad structures of $\Br$ and $\HBr$ 
in terms of trees following
that proposition, or~\cite{mcclure-smith}. 
For $c=0,1$, the proposition is~obvious.

If $c \geq 3$, the differential may create
internal points, as shown in the following
picture where the piece $\pa^0_1$ of the differential creates the
internal point \raisebox{.1em}{\rule{.4em}{.4em}}~:
\[
\unitlength=1.5em
\begin{picture}(4,2.5)(0,-.2)
%puvodni obrazek
\put(-2.5,0){
%horizontalni linky
\put(0,0){\line(1,0){2}}
\put(0,1){\line(1,0){2}}
\put(0,2){\line(1,0){2}}
%vertikalni linky
\put(0,0){\line(0,1){2}}
\put(1,0){\line(0,1){2}}
\put(2,0){\line(0,1){2}}

%cesta
\thicklines
\put(0,0){\makebox(0.00,0.00){$\bullet$}}
\put(0,0){\line(0,1){1}}
\put(0,1){\makebox(0.00,0.00){$\bullet$}}
\put(0,1){\line(1,0){1}}
\put(1,1){\makebox(0.00,0.00){$\bullet$}}
\put(1,1){\line(0,1){1}}
\put(1,2){\makebox(0.00,0.00){$\bullet$}}
\put(1,2){\line(1,0){1}}
\put(2,2){\makebox(0.00,0.00){$\bullet$}}
\put(.5,-.5){\makebox(0.00,0.00){\scriptsize $D^1_0$}}
}
%kolabovany obrazek
\put(2.5,0){
%horizontalni linky
\put(1,0){\line(1,0){1}}
\put(1,1){\line(1,0){1}}
\put(1,2){\line(1,0){1}}
%vertikalni linky
\put(1,0){\line(0,1){2}}
\put(2,0){\line(0,1){2}}
%cesta
\thicklines
\put(1,0){\makebox(0.00,0.00){$\bullet$}}
\put(1,0){\line(0,1){1}}
\put(1,1){\makebox(0.00,0.00){$\rule{.4em}{.4em}$}}
\put(1,1){\line(0,1){1}}
\put(1,2){\makebox(0.00,0.00){$\bullet$}}
\put(1,2){\line(1,0){1}}
\put(2,2){\makebox(0.00,0.00){$\bullet$}}
}
\thicklines
\put(.5,1){\vector(1,0){2}}
\put(1.5,1.5){\makebox(0.00,0.00){\scriptsize $\pa_1^0$}}
\end{picture}
\]

\noindent 
So $\HBrac c$ is not dg-closed if $c \geq 3$.
\end{proof}

\begin{semi}
For each $n,c\geq 0$, one may also consider the collection $|\sLat c| :=
\{|\sLat c|(n)\}_{n \geq 0}$ defined by 
\[
|\sLat c|(n) := \oTot(\uN(\Lat c(\Rada \bullet1n;\bullet)))
\]
i.e.~as the $n$-simplicial normalization followed by the
un-normalized cosimplicial totalization. 
\end{semi}

Observe that there is a natural projection $\pi: |\Lat c| \epi |\sLat
c|$ of collections.
We emphasize that, 
for $c \geq 3$, the collection $|\sLat c|$ has \underline{no} natural 
dg-operad structure although it will still play an important auxiliary
role in this section. We, however,~have

\begin{proposition}
For $c \leq 2$, the collection $|\sLat c|$ has a natural operad
structure such that the projection $\pi: |\Lat c| \epi |\sLat
c|$ is a map of dg-operads.
\end{proposition}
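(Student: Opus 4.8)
The plan is to reduce the statement to facts already established, by exploiting the structural property that for $c\leq 2$ the differential does not create internal points. First I would recall from the proof of Proposition~\ref{projj} that the operadic composition in $\Brac c$ preserves the number of internal points, and that for $c\le 2$ the simplicial and cosimplicial boundary operators, as described in~\ref{simpl} and~\ref{cosimpl}, interact controllably with internal points: the simplicial degeneracies $\sigma^r_i$ always add an internal point, so modding them out is what the $n$-simplicial normalization $\uN$ does, while the cosimplicial differential $\delta$ and degeneracies $s^i$ do not touch the underlying path at all (they only change the marking). The key point, analogous to the argument in the last Proposition of the excerpt, is that the simplicial boundaries $\pa^r_i$ may merge two points into one, but for $c\le 2$ they cannot turn a non-internal point into an internal one (an angle or extremal point stays such after contraction, since the complexity bound $c\le 2$ forbids the configurations that would be needed); this is exactly why $\HBrac c$ is dg-closed for $c\le 2$. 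Consequently the un-normalized cosimplicial totalization of the already $n$-simplicially normalized object carries all the structure needed.

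The steps, in order, are: (1) Observe that $|\sLat c|(n) = \oTot(\uN(\Lat c(\Rada\bullet1n;\bullet)))$ and $|\Lat c|(n) = \oTot(\uTot(\Lat c(\Rada\bullet1n;\bullet)))$ differ only by the passage from un-normalized to $n$-simplicially normalized totalization in the simplicial directions; the projection $\pi$ is induced levelwise by quotienting out simplicial degeneracies. (2) Invoke Appendix~\ref{s0}: $|\Lat c|$ is obtained by condensation and is a dg-operad; I want to show the condensation construction descends to the simplicial normalization when $c\le 2$. For this I would check that the $n$-simplicial normalization $\uN(\Lat c)$ still forms a (coloured, or after condensation ordinary) operad — this is where the "$\pa^r_i$ preserves the property of having no internal points" observation for $c\le 2$ enters, guaranteeing that the degenerate subspace $\Deg$ in the simplicial directions is an operadic ideal, just as in Proposition~\ref{projj}. (3) Since $\oTot$ (un-normalized cosimplicial totalization) is a lax monoidal/condensation-compatible functor — the cosimplicial structure being "free" in the sense that $\delta$ and $s^i$ leave the path untouched — applying it to the dg-operad $\uN(\Lat c)$ yields the dg-operad structure on $|\sLat c|$, and the quotient map $\uTot \epi \uN$ in each simplicial direction is compatible with operadic composition, so $\pi: |\Lat c|\epi|\sLat c|$ is a map of dg-operads. (4) Compatibility with differentials follows from the standard properties of simplicial normalization, exactly as at the end of the proof of Proposition~\ref{projj}, together with the fact that $\pi$ leaves the cosimplicial differential $\delta$ unchanged.

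The main obstacle is step~(2): proving that for $c\le 2$ the $n$-simplicial normalization is compatible with the operadic composition, i.e.\ that the degenerate subcollection (spanned, in each simplicial slot, by paths in the image of some $\sigma^r_i$) is an operadic ideal. Unlike the internal-point argument of Proposition~\ref{projj}, which was "almost obvious", here one must trace through the operadic composition formula~(\ref{pozitri_domu}) and its coloured-operad ingredient to see that composing a degenerate path (in some simplicial direction) with anything, or composing anything into a degenerate slot, again lands in the degenerate subspace — and crucially that this can fail for $c\ge3$ precisely because then the whiskering/composition can produce the bad configurations ruled out by $c\le2$. I would handle this by the same case analysis on the contraction operators $D^r_i$ used implicitly in the last Proposition of the excerpt, remarking that for $c\le2$ the relevant projections $p_{ij}$ have at most two angles, so contracting or expanding a column cannot create a path that both is non-degenerate and has a newly-internal point. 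Once this ideal property is in hand, everything else is bookkeeping with the totalization functors.
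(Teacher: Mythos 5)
Your strategy --- realize $|\sLat c|$ as the quotient of $|\Lat c|$ by the simplicially degenerate subcollection and prove that this subcollection is an operadic ideal --- is a legitimate route and is genuinely different from the paper's. The paper's proof is a two-line reduction: for $c=2$ it identifies $|\sLat 2|$ with the normalized Tamarkin--Tsygan operad $\NormT$, which was already exhibited in~\ref{TTTT} as the image of $\Tam=|\Lat 2|$ under the operad map $\pi:\Big\epi\NormB$ (the fact that $\NormB$ is an operad quotient of $\Big$ being imported from~\cite{batanin-markl}), and for $c=0,1$ the claim is declared obvious. So the paper buys the result from the tree picture, whereas you try to pay for it directly in lattice-path combinatorics.

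The difficulty is that the crucial payment is never made. You correctly flag the ideal property as the ``main obstacle'', but the justification offered --- that for $c\le 2$ the projections $p_{ij}$ have at most two angles, so no ``non-degenerate path with a newly-internal point'' can be created --- does not engage with the composition of the condensed operad $|\Lat c|$, which is not the coloured composition alone but involves whiskering as in~(\ref{pozitri_domu}); what has to be checked is that substituting a path carrying an unmarked internal point into another path, after the matching marked internal points have been inserted, can never convert that unmarked internal point into an angle. You also conflate two distinct statements: the observation that for $c\le 2$ the differential creates no internal points concerns the subcollection $\HBrac c$ of paths \emph{without} internal points being dg-closed, whereas what you need is that the operadic \emph{composition} preserves the span of paths \emph{with} unmarked internal points; one is about a subobject and the differential, the other about a quotient and the composition, and neither implies the other. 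Finally, step~(3) (``$\oTot$ is condensation-compatible, hence transports the operad structure'') cannot stand as written: the mixed totalization $\oTot\circ\uN$ is precisely \emph{not} a condensation, which is why the paper stresses that $|\sLat c|$ carries no natural operad structure for $c\ge 3$; a functorial argument of the kind you invoke would be uniform in $c$ and would prove too much.
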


\begin{proof}
The proof uses the fact that $|\sLat 2|$ is 
the normalized Tamarkin-Tsygan operad $\NormT$ which is a quotient of
$\Tam = |\Lat 2|$, see~\ref{TTTT}. This proves the proposition for
$c=2$. For $c=0,1$ the claim is obvious.
\end{proof}

The main theorem of this section reads:

\begin{theorem}
\label{jArKa}
For each $c \geq 0$, 
there is the following chain of weak equivalences of dg-operads:
\[
|\Lat c| \stackrel w\longleftarrow \Brac c \stackrel\pi\longrightarrow 
\nBrac c \stackrel w\longrightarrow |\nLat c|, 
\]
in which the maps $w$ are the whiskerings of Propositions~\ref{ww}
and~\ref{nww}, and $\pi$ is the normalization projection of
Proposition~\ref{projj}.  
\end{theorem}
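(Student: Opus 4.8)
The plan is to treat the three maps of the chain separately. All of them are morphisms of dg-operads, and \emph{weak equivalence} here means arity-wise quasi-isomorphism of the underlying complexes, so for each fixed arity $n$ it suffices to check that $w$, $\pi$, $w$ induce isomorphisms in homology. I would dispose of $\pi$ first, then handle the two whiskerings by one common device.

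For $\pi:\Brac c\epi\nBrac c$: by Proposition~\ref{projj} it is surjective with kernel the degenerate part $\Deg(\Brac c)(n)\subset\Brac c(n)$, spanned by the lattice paths carrying at least one internal point. Now $\Brac c(n)=\uTot(\Lat c(\Rada\bullet1n;0))$ is the $n$-fold simplicial totalization of an $n$-simplicial abelian group, and, by the explicit description in~\ref{simpl}, the paths with internal points are exactly those lying in the image of some simplicial degeneracy in some direction; thus $\Deg(\Brac c)(n)$ is the total degenerate subcomplex. Normalizing one simplicial direction at a time (in each direction the unnormalized complex is the normalized one plus a split, acyclic, complemented summand; Eilenberg--Zilber organizes the iteration), $\Deg(\Brac c)(n)$ is a finite successive extension of acyclic complexes, hence acyclic, so $\pi$ is an arity-wise quasi-isomorphism.

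For the two whiskerings the key observation is that, in both the unnormalized and the normalized settings, $|\Lat c|(n)$ (resp.\ $|\nLat c|(n)$) is a cosimplicial totalization $\oTot$ (resp.\ $\oN$) of a cosimplicial dg-abelian group $\mathcal C^\bullet$ whose term in cosimplicial degree $l$ is $\uTot(\Lat c(\Rada\bullet1n;l))$ (resp.\ $\uN(\Lat c(\Rada\bullet1n;l))$) and whose $0$-th term is precisely $\Brac c(n)$ (resp.\ $\nBrac c(n)$), cf.~\ref{unn},~\ref{norm},~\ref{cosimpl}. The whiskering $w$ lands in $\mathcal C^0$ plus strictly higher cosimplicial degrees (each inserted point marked~$1$ raises $l$ by one), it is a chain map by Propositions~\ref{ww} and~\ref{nww}, and it is a section of the projection $\rho$ of the totalization onto $\mathcal C^0$ (``erase all markings''). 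Hence it is enough to prove $\rho$ is a quasi-isomorphism. I would do this by filtering the totalization decreasingly by cosimplicial degree, $F^p$ being the part of cosimplicial degree $\ge p$; then $\bigcap_p F^p=0$ and the totalization is complete for this filtration, so the associated spectral sequence converges, and the surviving content is the claim that the $E_1$-page cosimplicial abelian group $l\mapsto H^*(\mathcal C^l,\pa)$ has conormalization concentrated in cosimplicial degree $0$, where it equals $H^*(\Brac c(n))$. Concretely this should follow from an explicit contracting homotopy in the cosimplicial direction built from a lattice-path move — pull a chosen marked internal point back toward $\varphi(0)$ and delete it, summed with signs over the choices — a ``homological inverse of whiskering.''

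The hard part is exactly this last point: showing that forgetting markings is a quasi-isomorphism $|\Lat c|(n)\simeq\Brac c(n)$ and $|\nLat c|(n)\simeq\nBrac c(n)$, i.e.\ writing down the cosimplicial-direction homotopy $h$ and checking $dh+hd=\id-w\rho$ against $d=\delta+\pa$. The verification is sign-sensitive, which is precisely why Proposition~\ref{ww} fixed the whiskering signs only indirectly, through the embedding of $|\Lat c|$ into chains on a large simplex and the surjection-operad identification of Proposition~\ref{Pozitri_prileti_Jaruska.}; I would carry the homotopy through that same model, letting the sign conventions of~\cite{berger-fresse} do the bookkeeping. As a sanity check one has the classical case $c=2$ (where $|\Lat 2|=\Tam$, $\Brac 2=\Br$, $\nBrac 2=\NBr$ and $w$ is the familiar whiskering of the brace operad) together with the trivial cases $c=0,1$; for $c\le2$ one could even route the comparison through the semi-normalized $|\sLat c|$, which is a dg-operad only in that range, but that shortcut does not reach general $c$, so the homotopy argument is what actually has to be carried out.
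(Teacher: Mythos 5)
Your architecture is the same as the paper's: dispose of $\pi$ as a multisimplicial normalization map, exhibit each whiskering $w$ as a section of the projection $\proj$ of the cosimplicial totalization onto its cosimplicial degree~$0$ part, and reduce everything to showing that $\proj$ is a quasi-isomorphism by means of the spectral sequence of the filtration by cosimplicial degree (with convergence secured, as you note, by completeness --- the paper invokes the Eilenberg--Moore comparison theorem). Up to that reduction the proposal is correct, and you have also correctly isolated the statement that remains: the $E^1$-page, as a cochain complex in $l$, must have cohomology concentrated in degree $0$, equal to $H_*(\Brac c(n))$.

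The gap is that the device you propose for this last step cannot work as described, and the actual content of the proof is missing. A contracting homotopy ``in the cosimplicial direction'' satisfying $dh+hd=\id-w\circ\proj$ would have to contract each column $\uTot(\Lat c(\Rada \bullet 1n;l))$, $l\geq 1$; but these columns are \emph{not} acyclic --- each has the homology of $\nBrac c(n)$, i.e.\ of a chain model of the little disks operad --- so no such homotopy exists, whatever lattice-path move one feeds it. The concentration in cosimplicial degree $0$ only appears at $E^2$, and it is a computation rather than a formal contraction. The paper supplies two ingredients, neither of which appears in your proposal: (i) the identification $E^1_{l*}\cong H_*(\nBrac c(n))$ for \emph{every} $l$, obtained from a second, inner spectral sequence filtering each column by the number of angles, whose initial term is a sum of two-sided bar constructions $\BBar_*\ot_\A\cdots\ot_\A\BBar_*$ of the polynomial algebra $\A$ (acyclic in positive degrees, leaving the cycles $x^l\ot[\ ]\ot\cdots\ot[\ ]$), together with the observation that the angle-dropping-by-two boundary terms cancel so that the residual differential is that of $\nBrac c(n)$; and (ii) the computation, on these representing cycles, that $d^1\colon E^1_{l*}\to E^1_{(l+1)*}$ is $0$ for $l$ even and $\id$ for $l$ odd, which is what kills all columns but the first. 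A smaller inaccuracy: the paper uses the semi-normalized $|\sLat c|$ for \emph{all} $c$, merely as a collection, to transport the argument to the normalized whiskering via Lemma~\ref{JARKa}; dismissing it on the grounds that it is not an operad for $c\geq 3$ misreads its role, and your parallel treatment of $|\nLat c|(n)$ would need some such intermediary to make sense of $\proj$ there.
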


\begin{proof}
The map $\pi : \Brac c(n) \to \nBrac c(n)$ is a homology isomorphism
for each $n,c \geq 0$ 
because it is the normalization map of an $n$-simplicial abelian
group, so $\pi$ is a weak equivalence of dg operads. 

Let us analyze the un-normalized whiskering $w : \Brac c (n)
\hookrightarrow |\Lat c|(n)$. 
The arity $n$ piece of the dg-operad 
$|\Lat c|$ can be organized into the bicomplex of
Figure~\ref{JJJ} in which the $l$th column ${\Lat c}(n)^l_*$, 
$l \geq 0$, is the 
simplicial totalization
$\uTot(\Lat c(\Rada \bullet 1n;l))$ and the horizontal
differentials are induced from the cosimplicial structure. 
\begin{figure}
{% Picture saved by xtexcad 2.4
\unitlength=1.2pt
\begin{picture}(200.00,143.00)(-50.00,50.00)
\put(150.00,52){\makebox(0.00,0.00){$\vdots$}}
\put(100.00,52){\makebox(0.00,0.00){$\vdots$}}
\put(50.00,52){\makebox(0.00,0.00){$\vdots$}}
\put(200.00,80.00){\makebox(0.00,0.00){$\cdots$}}
\put(200.00,110.00){\makebox(0.00,0.00){$\cdots$}}
\put(200.00,140.00){\makebox(0.00,0.00){$\cdots$}}
\put(200.00,170.00){\makebox(0.00,0.00){$\cdots$}}
\put(200.00,200.00){\makebox(0.00,0.00){$\cdots$}}
\put(150.00,120.00){\vector(0,1){10.00}}
\put(100.00,90.00){\vector(0,1){10.00}}
\put(50.00,90.00){\vector(0,1){10.00}}
\put(150.00,90.00){\vector(0,1){10.00}}
\put(100.00,60.00){\vector(0,1){10.00}}
\put(50.00,60.00){\vector(0,1){10.00}}
\put(150.00,60.00){\vector(0,1){10.00}}
\put(100.00,120.00){\vector(0,1){10.00}}
\put(50.00,120.00){\vector(0,1){10.00}}
\put(150.00,150.00){\vector(0,1){10.00}}
\put(100.00,150.00){\vector(0,1){10.00}}
\put(50.00,150.00){\vector(0,1){10.00}}
\put(150.00,180.00){\vector(0,1){10.00}}
\put(100.00,180.00){\vector(0,1){10.00}}
\put(50.00,180.00){\vector(0,1){10.00}}
\put(168.00,140.00){\vector(1,0){22.00}}
\put(118.00,140.00){\vector(1,0){12.00}}
\put(68.00,140.00){\vector(1,0){12.00}}
\put(10.00,140.00){\vector(1,0){22.00}}
\put(168.00,110.00){\vector(1,0){22.00}}
\put(118.00,110.00){\vector(1,0){12.00}}
\put(68.00,110.00){\vector(1,0){12.00}}
\put(10.00,110.00){\vector(1,0){22.00}}
\put(168.00,80.00){\vector(1,0){22.00}}
\put(118.00,80.00){\vector(1,0){12.00}}
\put(68.00,80.00){\vector(1,0){12.00}}
\put(10.00,80.00){\vector(1,0){22.00}}
\put(10.00,170.00){\vector(1,0){25.00}}
\put(165.00,170.00){\vector(1,0){25.00}}
\put(115.00,170.00){\vector(1,0){20.00}}
\put(65.00,170.00){\vector(1,0){20.00}}
\put(160.00,200.00){\vector(1,0){30.00}}
\put(110.00,200.00){\vector(1,0){30.00}}
\put(60.00,200.00){\vector(1,0){30.00}}
\put(150.00,80.00){\makebox(0.00,0.00){${\Lat c}(n)^2_{-3}$}}
\put(150.00,110.00){\makebox(0.00,0.00){${\Lat c}(n)^2_{-2}$}}
\put(150.00,140.00){\makebox(0.00,0.00){${\Lat c}(n)^2_{-1}$}}
\put(150.00,170.00){\makebox(0.00,0.00){${\Lat c}(n)^2_{0}$}}
\put(100.00,80.00){\makebox(0.00,0.00){${\Lat c}(n)^1_{-3}$}}
\put(100.00,110.00){\makebox(0.00,0.00){${\Lat c}(n)^1_{-2}$}}
\put(100.00,140.00){\makebox(0.00,0.00){${\Lat c}(n)^1_{-1}$}}
\put(100.00,170.00){\makebox(0.00,0.00){${\Lat c}(n)^1_{0}$}}
\put(50.00,80.00){\makebox(0.00,0.00){${\Lat c}(n)^0_{-3}$}}
\put(50.00,110.00){\makebox(0.00,0.00){${\Lat c}(n)^0_{-2}$}}
\put(50.00,140.00){\makebox(0.00,0.00){${\Lat c}(n)^0_{-1}$}}
\put(50.00,170.00){\makebox(0.00,0.00){${\Lat c}(n)^0_{0}$}}
\put(0.00,80.00){\makebox(0.00,0.00){$0$}}
\put(0.00,110.00){\makebox(0.00,0.00){$0$}}
\put(0.00,140.00){\makebox(0.00,0.00){$0$}}
\put(0.00,170.00){\makebox(0.00,0.00){$0$}}
\put(150.00,200.00){\makebox(0.00,0.00){$0$}}
\put(100.00,200.00){\makebox(0.00,0.00){$0$}}
\put(50.00,200.00){\makebox(0.00,0.00){$0$}}
\end{picture}}
\caption{\label{Zase_cekam_jestli_Jarka_zavola.}
The structure of the dg-operad $|\Lat c|$.\label{JJJ}}
\end{figure}
The dg-abelian group $|\Lat c|(n)$ 
is then the 
corresponding ${\rm Tot}^{\prod}$-total complex
(see~\cite[Section~5.6]{weibel} for the terminology). 

The dg-abelian group $\Brac c(n)$ appears as the leftmost column of
Figure~\ref{JJJ}, so one has the projection $\proj : |\Lat c|(n) \to
\Brac c(n)$ of dg-abelian groups which is the identity on the leftmost
column and sends the remaining columns to~$0$. Since clearly $\proj
\circ w = \id$, it is enough to prove that $\proj$ is a homology isomorphism.

We interpret $\proj :  |\Lat c|(n) \to \Brac c(n)$ as a map of
bicomplexes, with $\Brac c(n)$ consisting of one
column, and we prove that $\proj$ induces an isomorphism of the $E^2$-terms
of the spectral sequences induced by the column filtrations. 
These filtrations are complete and exhaustive, thus the
Eilenberg-Moore comparison theorem~\cite[Theorem~5.5.1]{weibel}
implies that $\proj$ is a homology isomorphism.

Let $(E^0_{**},d^0)$ be the $0$th term of column spectral sequence for
$|\Lat c|(n)$. This means that $(E^0_{l,*},d^0) = (\uTot(\Lat c(\Rada
\bullet 1n;l))_*,\pa)$, the $l$th column of the bicomplex in
Figure~\ref{JJJ} with the simplicial differential. 

To calculate
$E^1_{l*}:= H_*(E^0_{l*},d^0)$, we recall the explicit
description of the simplicial structures given in~\ref{simpl} and observe
that the vertical differential $d^0=\pa$ does not increase the number of
angles of lattice paths.
We therefore have, for each {\em fixed\/} $l \geq 0$, another spectral sequence
$(\EE_{**}^r,\dd^r)$ induced by the filtration of ${\Lat c}(n)^l_*$ by
the number of angles. The piece $\EE^0_{uv}$ of the initial sheet of
this spectral sequence is spanned by marked paths $(p,\mu) \in \Lat
c(\Rada k1n;l)$ with $-u$ angles and $v = -u- (k_1 + \cdots +
k_n)$. With this degree convention, the total
degree of an element of $\EE_{**}$ is the same as the degree of
the corresponding element in $E^1_{l*}$. By simple combinatorics,
$(\EE_{**}^r,\dd^r)$ is a~spectral sequence concentrated at the region
$\{(u,v);\ u \leq 1-n,\ u-v \geq 2 -2n\}$ of the $(u,v)$-plane, thus no
convergence problems occur. One easily sees that, as
dg-abelian groups,
\begin{equation}
\label{konec_pobytu}
(\EE_{u*}^0,\dd^0) \cong
\left(\rule{0pt}{1.4em}\right. \hskip -.5em
\bigoplus_\doubless{p \in \nBrac c(n)}{\#Angl(p) = -u} 
\bigoplus_{i_1 + \cdots + i_{u+1} = n-1 -*}   
\{
\underbrace{\BBar_{i_1}\otimes_\A \cdots \otimes_\A 
\BBar_{i_{u+1}}}_{\mbox {\scriptsize $-u+1$ factors}}
\}_l,d_\BBar
\left.\rule{0pt}{1.4em}\right),
\end{equation}
where $\BBar_* = \BBar_*(\A,\A,\A)$ is the un-normalized two-sided bar
construction of the polynomial algebra $\A$ and the differential
$d_\BBar$ is induced in the standard manner from the bar
differential. 
The subscript $l$ in~(\ref{konec_pobytu}) 
denotes the $l$-homogeneous part with respect to the grading induced
by the number of instances of $x$. The factors of the direct sum are
indexed by unmarked paths with no internal points representing a basis
of $\nBrac c(n)$. The isomorphism~(\ref{konec_pobytu}) is best
explained by looking at the marked path
\[
{% Picture saved by xtexcad 2.4
\thicklines
\unitlength=.85pt
\begin{picture}(200.00,50.00)(0.00,0.00)
\put(200.00,0.00){\makebox(0.00,0.00){\raisebox{1.5em}{\scriptsize $0$}}}
\put(180.00,20.00){\makebox(0.00,0.00){\raisebox{1.5em}{\scriptsize $2$}}}
\put(160.00,40.00){\makebox(0.00,0.00){\raisebox{1.5em}{\scriptsize $0$}}}
\put(120.00,0.00){\makebox(0.00,0.00){\raisebox{1.5em}{\scriptsize $3$}}}
\put(110.00,10.00){\makebox(0.00,0.00){\raisebox{1.5em}{\scriptsize $4$}}}
\put(100.00,20.00){\makebox(0.00,0.00){\raisebox{1.5em}{\scriptsize $0$}}}
\put(90.00,30.00){\makebox(0.00,0.00){\raisebox{1.5em}{\scriptsize $1$}}}
\put(80.00,40.00){\makebox(0.00,0.00){\raisebox{1.5em}{\scriptsize $7$}}}
\put(60.00,20.00){\makebox(0.00,0.00){\raisebox{1.5em}{\scriptsize $2$}}}
\put(40.00,0.00){\makebox(0.00,0.00){\raisebox{1.5em}{\scriptsize $0$}}}
\put(27.50,12.50){\makebox(0.00,0.00){\raisebox{1.5em}{\scriptsize $1$}}}
\put(12.50,27.50){\makebox(0.00,0.00){\raisebox{1.5em}{\scriptsize $0$}}}
\put(0.00,40.00){\makebox(0.00,0.00){\raisebox{1.5em}{\scriptsize $3$}}}
\put(200.00,0.00){\makebox(0.00,0.00){$\bullet$}}
\put(180.00,20.00){\makebox(0.00,0.00){$\bullet$}}
\put(160.00,40.00){\makebox(0.00,0.00){$\bullet$}}
\put(120.00,0.00){\makebox(0.00,0.00){$\bullet$}}
\put(110.00,10.00){\makebox(0.00,0.00){$\bullet$}}
\put(100.00,20.00){\makebox(0.00,0.00){$\bullet$}}
\put(90.00,30.00){\makebox(0.00,0.00){$\bullet$}}
\put(80.00,40.00){\makebox(0.00,0.00){$\bullet$}}
\put(60.00,20.00){\makebox(0.00,0.00){$\bullet$}}
\put(40.00,0.00){\makebox(0.00,0.00){$\bullet$}}
\put(27.50,12.50){\makebox(0.00,0.00){$\bullet$}}
\put(12.50,27.50){\makebox(0.00,0.00){$\bullet$}}
\put(0.00,40.00){\makebox(0.00,0.00){$\bullet$}}
\put(160.00,40.00){\line(1,-1){40.00}}
\put(120.00,0.00){\line(1,1){40.00}}
\put(80.00,40.00){\line(1,-1){40.00}}
\put(40.00,0.00){\line(1,1){40.00}}
\put(0.00,40.00){\line(1,-1){40.00}}
\end{picture}}
\] 
with $4$ angles which is an element of $\EE^0_{-4,-6}$ represented, via the
isomorphism~(\ref{konec_pobytu}), by the element
\[
x^3 \ot [x^0|x^1]\ot x^0\ot [x^2] \ot x^7 \ot [x^1|x^0|x^4] \ot x^3
\ot [\ ] \ot x^0 \ot [x^2]\ot x^0
\]
in $\BBar_2 \ot_\A  \BBar_1 \ot_\A \BBar_3 \ot_\A \BBar_0 \ot_\A \BBar_1$.
It is a standard result of homological algebra that $(\BBar_* \ot_\A
\cdots  \ot_\A \BBar_*, d_\BBar)$ is acyclic in positive dimensions,
thus the cohomology of the right hand side of~(\ref{konec_pobytu}) is 
spanned by cycles of the form 
\begin{equation}
\label{zavola_Jarka?}
x^l \otimes [\ ] \otimes \cdots \otimes [\ ] \in \EE^0_{-\#\Angl(p),n-1}.
\end{equation}

At this point we need to observe that the differential $\pa$ decreases the
number of angles of lattice paths $p$ with no internal points representing
elements of $\nBrac c(n)$ by one. Indeed, it is easy to see that a
simplicial boundary operator described in~\ref{simpl} may either
decrease the number of angles of $p$ by $1$ or by $2$. When it decreases
it by $2$ it creates an internal point, so the contributions of
all simplicial boundaries that decrease the number of angles by $2$
sum up to $0$, by the standard property of the simplicial normalization. 
We conclude that $(\bigoplus_{* = u+v}\EE^1_{uv},\dd^1) 
\cong ({\nBrac c}_* (n),\pa)$ as
dg-abelian groups and that $(\EE_{**}^r,\dd^r)$ collapses at
this level.

Let us return to the column spectral sequence $(E^r_{**},d^r)$
for the bicomplex in Figure~\ref{Zase_cekam_jestli_Jarka_zavola.}.
It follows from the above calculation that the $l$th column $E^1_{l*}$ of 
the first term $(E^1_{**},d^1)$ equals $H_*(\nBrac c(n))$ for each $l
\geq 0$. It remains to describe the differential 
$d^1 : E^1_{l*} \to E^1_{(l+1)*}$. To this end, one needs to observe
that the expressions~(\ref{zavola_Jarka?}) representing
elements of $E^1_{l*} = H_*(\nBrac c(n))$ correspond to marked
lattice paths without internal points, whose only
marked point is the initial one, marked by $l$. From the
description of the cosimplicial structure given in~\ref{cosimpl}
one easily obtains that
\[
d^1 : E^1_{l*} \to E^1_{(l+1)*} = \cases{0}{if $l$ is even and}
{\id}{if $l$ is odd.}
\]
We conclude that $E^2_{**} := H_*(E^1_{**},d^1)$ is concentrated at
the leftmost column which equals $H_*(\nBrac c(n))$ and that, from the
obvious degree reasons, the column spectral sequence collapses at this
stage.  Since we already know that the projection $\Brac c
\stackrel\pi\epi \nBrac c$ is a weak equivalence~i.e., in particular,
that $H_*(\Brac c (n)) \cong H_*(\nBrac c (n))$, the above facts imply
that $\proj : |\Lat c|(n) \to \Brac c(n)$ induces an isomorphism of
the $E^2$-terms of the column spectral sequences, so it is a homology
isomorphism and $w$ is a homology isomorphism, too.

Let us finally prove that the normalized  whiskering $w : \nBrac c (n)
\hookrightarrow |\nLat c|(n)$ is a weak equivalence. We have the
composition
\begin{equation}
\label{ceka_mne_Jaruska?}
\nBrac c (n) \stackrel w\hookrightarrow |\nLat c|(n)  
\stackrel{\dot \iota}\hookrightarrow |\sLat c|(n) 
\end{equation}
in which the obvious inclusion $\dot \iota$ is a homology
isomorphism by a simple lemma formulated below. As in the
un-normalized case, the dg-abelian group $\nBrac c (n)$ is the first
column of the semi-normalized version of the bicomplex in
Figure~\ref{JJJ}, so there is a natural projection $\proj :  |\sLat
c|(n) \to \nBrac c (n)$. This $\proj$ is a homology
isomorphism by the same arguments as in the un-normalized case, only
using in~(\ref{konec_pobytu}) the normalized bar construction
instead. The proof is finished by observing that $\proj$ is the left
inverse of the composition~(\ref{ceka_mne_Jaruska?}).
\end{proof}

In the proof of Theorem~\ref{jArKa} we used the following

\begin{lemma}
\label{JARKa}
The inclusion ${\dot \iota} : |\nLat c|(n) \hookrightarrow |\sLat c|(n)$
is a homology isomorphism for each \hbox{$n,c \geq 0$}.
\end{lemma}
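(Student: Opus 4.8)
The plan is to recognise Lemma~\ref{JARKa} as an instance of the standard fact that cosimplicial normalisation does not change the homotopy type of a totalisation. Observe first that, by construction, $|\nLat c|(n)$ and $|\sLat c|(n)$ are two totalisations of one and the same cosimplicial dg-abelian group, namely $\overline X^\bullet := \uN(\Lat c(\Rada \bullet 1n;\bullet))$: the second is the cosimplicial totalisation $\oTot(\overline X^\bullet)$ of $\overline X^\bullet$ itself, while the first is obtained by first replacing $\overline X^\bullet$ by its cosimplicially-normalised subcomplex $N^\bullet\overline X\subseteq\overline X^\bullet$ and then forming the cosimplicial totalisation; the map $\dot\iota$ is the evident inclusion. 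Thus it suffices to prove the following general statement: for any cosimplicial dg-abelian group $Y^\bullet$, the inclusion $\oN(Y^\bullet)\hookrightarrow\oTot(Y^\bullet)$ is a homology isomorphism. (Here $\oTot$ denotes, as in~\ref{unn}, the \emph{product} totalisation over the cosimplicial degree.)

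Next I would invoke the dual Dold--Kan picture for an ordinary cosimplicial abelian group $A^\bullet$: the un-normalised cochain complex $C^\bullet A$, with differential $\delta=\sum(-1)^id^i$, decomposes naturally as a direct sum of cochain complexes $C^\bullet A = N^\bullet A\oplus D^\bullet A$, where $N^\bullet A=\bigcap_i\ker(s^i)$ is the normalised subcomplex and $D^\bullet A$ the complementary codegenerate subcomplex, and $D^\bullet A$ is \emph{naturally contractible}, the contracting cochain homotopy $h:D^\bullet A\to D^{\bullet-1}A$ being built solely from the cosimplicial operators and hence natural in $A$. Naturality is the point: when $A^\bullet$ is allowed to be a cosimplicial object in cochain complexes (i.e.\ one applies the construction in each internal degree), the splitting $C=N\oplus D$ is compatible with the internal differential $\pa$ and the operators $h$ commute with $\pa$.

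Applying this to $Y^\bullet=\overline X^\bullet$ degreewise in the internal grading produces a splitting of bicomplexes $\overline X^\bullet = N^\bullet\overline X\oplus D^\bullet\overline X$, compatible with $\pa$, in which the first summand is precisely the cosimplicially-normalised subcomplex. Taking (product) cosimplicial totalisations gives $\oTot(\overline X^\bullet)=\oN(\overline X^\bullet)\oplus\oTot(D^\bullet\overline X)$ with $\dot\iota$ the inclusion of the first summand, so everything reduces to the acyclicity of $\oTot(D^\bullet\overline X)$. For this I would use the contracting homotopy $h$: it lowers the cosimplicial degree by one and preserves the internal degree, so with the usual Koszul sign it extends to an operator $\tilde h$ on the total complex, and the relation $d\,\tilde h+\tilde h\,d=\mathrm{id}$, with $d=\delta\pm\pa$ the total differential, follows termwise — the $\delta$-part is the contraction $\delta h+h\delta=\mathrm{id}$ of the codegenerate cochain complexes, and the $\pa$-part vanishes because $h$ commutes with $\pa$. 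Hence $\oTot(D^\bullet\overline X)$ is contractible and $\dot\iota$ is a homology isomorphism.

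The one delicate point — and the only place where something could go wrong — is that $\oTot$ is a \emph{product} over cosimplicial degrees, so one may not simply quote ``a bicomplex with exact rows has acyclic total complex''. This is precisely why I would argue through the explicit homotopy $\tilde h$ rather than a spectral sequence: each component of $\tilde h(\xi)$ is the value of $h$ on a \emph{single} component of $\xi$, so $\tilde h$ is well defined on the product and the identity $d\tilde h+\tilde h d=\mathrm{id}$ holds componentwise. The remaining ingredients — the naturality of the dual Dold--Kan contraction $h$ (so that it commutes with $\pa$) and the bookkeeping of Koszul signs in $\tilde h$ — are routine.
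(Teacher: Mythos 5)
Your proof is correct and follows essentially the same route as the paper, whose entire argument is the one-line observation that $|\nLat c|(n)$ is the cosimplicial normalization of the cosimplicial dg-abelian group whose un-normalized totalization is $|\sLat c|(n)$, so that the inclusion is a quasi-isomorphism by the standard (dual Dold--Kan) splitting-off of a naturally contractible codegenerate summand. You merely unfold that standard fact in detail; your extra care in using the explicit natural contracting homotopy componentwise, rather than a spectral sequence, to handle the product totalization is a legitimate refinement of a point the paper leaves implicit.
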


\begin{proof}
The lemma follows from the fact that $|\nLat c|(n)$ is the cosimplicial
normalization of the dg-cosimplicial group  $|\sLat c|(n)$.
\end{proof}

\begin{center}
-- -- -- -- --
\end{center}

In the following two sections we consider several operads.
% and related spaces of trees.  
To simplify the navigation, we give a glossary
of notation.\label{Smutek_z_Jarky.}
\[
\def\arraystretch{1.2}
\begin{array}{lll}
\Big,&\mbox {big operad of all natural operations,}
&\mbox{ page~\pageref{bIG}}
\\
\NormB,&\mbox {normalized  big operad,}
&\mbox{ page~\pageref{Jary}}
\\
\HB,&\mbox {non-unital  big operad,}
&\mbox{ page~\pageref{varr}}
\\
\Tam,&\mbox {Tamarkin-Tsygan suboperad of $\Big$,}
&\mbox{ page~\pageref{TTTT}}
\\
\NormT,&\mbox {normalized Tamarkin-Tsygan operad,}
&\mbox{ page~\pageref{tammm}}
\\
\nTam,&\mbox {non-unital Tamarkin-Tsygan operad,}
&\mbox{ page~\pageref{tammm-non}}
\\
\Br,&\mbox {brace operad,}
&\mbox{ page~\pageref{brr}}
\\
\NBr,&\mbox{normalized brace operad,}
&\mbox{ page~\pageref{brr-norm}}
\\
\HBr,&\mbox{non-unital brace operad,}
&\mbox{ page~\pageref{brr-non}}
%\\
%\Tr^l_{\Rada k1n},&\mbox{set of 
%$(l;\Rada k1n)$-trees,}&\mbox{ page~\pageref{d2b}}
%\\
%A_{\Rada k1n},&\mbox{set of amputated 
%$(\Rada k1n)$-trees,}&\mbox{ page~\pageref{Def11}}
%\\
%\nA_{\Rada k1n},&\mbox{set of
%amputated $(\Rada k1n)$-trees without stubs,}&\mbox{ page~\pageref{Def11}}
\end{array}
\]
The operads mentioned in the list and their maps 
are organized in Figure~\ref{fig3} on page~\pageref{fig3}.

\section{Operads of natural operations}
\label{s1}

In the previous sections we studied versions of the lattice
path operad and its suboperads. We only briefly mentioned that some
of these operads act on the Hochschild cochain complex of an
associative algebra. The present and the
following sections will be devoted to this action. It
turns out that, in order to retain some nice features of the
constructions in the previous section, namely the `whiskering'
formula~(\ref{JaRka}) without signs, on one hand, and to have simple
rules for the signs in formulas for natural operations on the other
hand, one needs to use the `operadic'
degree convention, recalled in the next subsection.

\begin{cvo}
\label{Je_zdrojem_smutku.}
There are two conventions in defining the Hoch\-schild cohomology of an
associative algebra $A$. The {\em classical one\/} used for instance
in~\cite{gerstenhaber:AM63} is based on the chain
complex $\CHclas * = \bigoplus_{n \ge 0} \CHclas n$, where $\CHclas n
:= \Lin(\otexp An,A)$ (the subscript {\em cl\/} refers to
``classical''). Another appropriate name would be the {\em
(co)simplicial\/} convention, because $\CHclas *$ is a natural
cosimplicial abelian group.  With this convention, the cup product $\cup$ is a
degree $0$ operation and the Gerstenhaber bracket $[-,-]$ has degree
$-1$, see~\cite[Section~7]{gerstenhaber:AM63} for the `classical'
definitions of these operations.

On the other hand, it is typical for this part of mathematics that signs
are difficult to handle. A systematic way to control
them is the {\em Koszul sign rule\/} requiring 
that whenever we interchange two
``things'' of odd degrees, we multiply the sign by $-1$. This, however,
needs the definition
of the Hochschild cohomology as
the operadic cohomology of associative algebras~\cite{fox-markl:ContM97}. 
Now the underlying chain complex is 
\begin{equation}
\label{cano}
\CH * := \Lin(\bbT(\desusp A),\desusp A)^*, 
\end{equation}
where $\da$ denotes the desuspension of a (graded) vector
space and $\bbT(\hbox{$\desusp A$})$ the tensor algebra generated by 
$A$ placed in degree $-1$.
Explicitly,  $\CH * = \bigoplus_{n \ge -1} \CH n$, where $\CH n := 
\Lin(\otexp A{n+1},A)$, so $\CH n =  \CHclas{n+1}$ for $n \geq -1$.
With this convention, the cup product has degree 
$+1$ and the Gerstenhaber bracket degree~$0$.

Depending on the choice of the convention, there are two definitions
of the `big' operad of natural operations, see~\ref{bIG} below.  The
{\em classical\/} one introduces $\Bigclas$ as a certain suboperad of
the endomorphism operad $\Endop_{\CHclas*}$ of the graded vector space
$\CHclas*$, and the {\em operadic\/} one introduces $\Big$ as a
suboperad of the endomorphism operad $\Endop_{\CH*}$. Here $A$ is a
{\it generic}, in the sense of Definition~\ref{generic}, 
unital associative algebra. The difference between $\Bigclas$
and $\Big$ is merely conventional; the operad $\Bigclas$ is the
operadic suspension $\ss \Big$ of the operad
$\Big$~\cite[Definition~II.3.15]{markl-shnider-stasheff:book} while,
of~course, $\Endop_{\CHclas*} \cong \ss \Endop_{\CH*}$.
\end{cvo}

\begin{convention}
\label{Jak_to_dopadne_s_Jarkou?}
In sections~\ref{s1} and~\ref{brr} we accept the {\em operadic\/}
convention because we want to rely on the Koszul sign rule. As
explained above, the operads $\Big$ and $\Bigclas$ differ from each
other only by the
regrading and sign factors.
\end{convention}

\begin{the-big}
\label{bIG}
Recall the dg-operad $\Big = \{\Big(n)\}_{n \geq 0}$ of {\em
  all natural multilinear operations\/} on the (operadic) Hochschild
cochain complex~(\ref{cano}) of a generic associative algebra $A$ (see
Definition~\ref{generic}) with
coefficients in itself introduced in~\cite{markl:CZEMJ07} (but notice that
we are using here the operadic degree convention,
see~\ref{Jak_to_dopadne_s_Jarkou?}, while~\cite{markl:CZEMJ07} uses the
classical one).  

Let $A$ be a unital associative algebra. 
A {\em natural operation\/} in the sense of~\cite{markl:CZEMJ07} is a~linear
combination of compositions of the following `elementary' operations:

(a)~The insertion $\circ_i : \CH {k} \otimes \CH {l} \to \CH
{k+ l}$ given, for $k,l \geq -1$ and $0 \leq i \leq k$, by the formula
\[
\circ_i(f,g)(\Rada a0{k+l}) :=  (-1)^{il} f(\Rada a0{i-1},
g(\Rada ai{i+l}),\Rada a{i+l+1}{k+l}), 
\]
for $\Rada a1{k+l-1} \in A$ -- the sign is determined by the Koszul rule!

(b)~Let $\mu : A \ot A \to A$ be the associative product, $\id :A \to
A$ the identity map and $1 \in
A$ the unit. Then elementary operations are also
the `constants' $\mu \in \CH 1$, $\id \in \CH 0$ and  $1 \in
\CH {-1}$.

(c)~The assignment $f \mapsto \sgn(\sigma) \cdot f\sigma$ permuting
the inputs of a cochain $f \in \CH k$ according to a permutation
$\sigma \in \Sigma_{k+1}$ and multiplying by the signature of $\sigma$
is an elementary operation.
\end{the-big}

Let $B(A)^l_{\Rada k1n}$ denote, for $l, \Rada k1n \geq 0$, 
the abelian group of all natural, in the above sense, operations 
\begin{equation}
\label{JarkA}
O:\CH {k_1-1} \otimes \cdots \ot  \CH {k_n-1} \to \CH{ l-1}.
\end{equation} 
The regrading in the above
equation guarantees that the super- and 
subscripts of $B(A)^l_{\Rada k1n}$
will all be non-negative integers. Moreover, with 
this definition the spaces $B(A)^l_{\Rada k1n}$ agree
with the ones introduced in~\cite{batanin-markl}.
The system $B(A)^l_{\Rada k1n}$ clearly forms an $\bbN$-coloured suboperad
$B(A)$ of the endomorphism operad of the $\bbN$-coloured
collection $\{\CH {n-1}\}_{n \geq 0}$.

Recall that the Hochschild differential $\dh : \CH
{n-1} \to \CH {n}$ is, for $n \geq 0$, given by the formula
\begin{eqnarray*}
\dh f(a_0\otimes\ldots\otimes a_n)&:=&
(-1)^{n+1} a_0f(a_1\otimes\ldots\otimes a_n)
+f(a_0\otimes\ldots\otimes a_{n-1})a_n
\\
&& +\sum_{i=0}^{n-1}(-1)^{i+n}f(a_0\otimes\ldots
\otimes a_ia_{i+1}\otimes\ldots\otimes a_n),
\end{eqnarray*}
for $a_i\in A$. Apparently, $\dh$ is  
a natural operation belonging to $B(A)^{n+1}_n$. Therefore, if $O
\in B(A)^{l}_{\Rada k1n}$ is as in~(\ref{JarkA}), one may define $\delta O
\in B(A)^{l+1}_{\Rada k1n}$ and, for $1 \leq i \leq k$, also $\partial_i O \in
B(A)^{l}_{k_1,\ldots,k_{i-1},k_i -1,k_{i+1},\ldots,k_n}$ by
\begin{equation}
\label{popozitri_Praha}
\def\arraystretch{1.4}
\begin{array}{rcl}
\delta O(\Rada f1n) &:=& \dh O(\Rada f1n)\  \mbox { and}
\\
\partial_i O(\Rada f1n) &:=& 
(-1)^{k_i + \cdots+ k_{n} + l + n + i}\cdot
O(\Rada f1{i-1},\dh f_i,\Rada f{i+1}n).
\end{array}
\end{equation}
The sign in the second line of the above display equals 
$(-1)^{\deg(f_1) + \cdots + \deg(f_{i-1})} \cdot (-1)^{\deg(O)}$ as
dictated by the Koszul rule.

It follows from definition that elements of 
$B(A)^l_{\Rada k1n}$ can be represented by linear
combinations of $(l;\Rada k1n)$-trees in the sense of the following
definition in which, as usual, the {\em arity\/} of a vertex of a
rooted tree is the number of its input edges and the {\em legs\/} are
the input edges of a tree, 
see~\cite[II.1.5]{markl-shnider-stasheff:book} for the terminology.

\begin{definition}
\label{d2b}
Let $l,\Rada k1n$ be non-negative integers.  An {\em $(l;\Rada k1n)$-tree\/}
is a planar rooted tree with legs labeled by $\rada 1l$ and three types of
vertices:
\begin{itemize}
\item[(a)] 
`white' vertices of arities $\rada {k_1}{k_n}$ labeled by 
$\rada 1n$, 
\item[(b)] 
`black' vertices of arities $\geq 2$ and
\item[(c)] 
`special' black vertices of arity $0$ (no input edges).
\end{itemize}
We moreover require that there are no edges connecting two
black vertices or a black vertex with a special vertex. For $n=0$ we
allow also the exceptional trees \exeptional\ and \stub\ with no
vertices. 
\end{definition}

We call an internal edge whose initial vertex is special a {\em stub\/} (also
called, in~\cite{kontsevich-soibelman}, a {\em tail\/}). It follows
from definition that the terminal vertex of a stub is white;  the
exceptional tree \stub\ is not a stub. An
example of an $(l;\Rada k1n)$-tree is given in Figure~\ref{fig2}.

\begin{figure}
{% Picture saved by xtexcad 2.4
\unitlength=1.2pt
\thicklines
\begin{picture}(120.00,95.00)(-90.00,0.00)
\put(50.00,0.00){\makebox(0.00,0.00){$8$}}
\put(80.00,0.00){\makebox(0.00,0.00){$7$}}
\put(20.00,0.00){\makebox(0.00,0.00){$5$}}
\put(30.00,0.00){\makebox(0.00,0.00){$6$}}
\put(90.00,0.00){\makebox(0.00,0.00){$4$}}
\put(0.00,0.00){\makebox(0.00,0.00){$3$}}
\put(120.00,0.00){\makebox(0.00,0.00){$2$}}
\put(60.00,0.00){\makebox(0.00,0.00){$1$}}
\put(100.00,20.00){\makebox(0.00,0.00){$\bullet$}}
\put(40.00,20.00){\makebox(0.00,0.00){$\bullet$}}
\put(60.00,70.00){\makebox(0.00,0.00){$\bullet$}}
\put(30.00,20.00){\makebox(0.00,0.00){$\bullet$}}
\put(100.00,40.00){\makebox(0.00,0.00){\large$\circ$}}
\put(102.00,42.00){\makebox(0.00,0.00)[lb]{\scriptsize $4$}}
\put(80.00,30.00){\makebox(0.00,0.00){\large$\circ$}}
\put(82.00,32.00){\makebox(0.00,0.00)[lb]{\scriptsize $3$}}
\put(40.00,30.00){\makebox(0.00,0.00){\large$\circ$}}
\put(38.00,32.00){\makebox(0.00,0.00)[rb]{\scriptsize $2$}}
\put(60.00,50.00){\makebox(0.00,0.00){\large$\circ$}}
\put(62.00,52.00){\makebox(0.00,0.00)[lb]{\scriptsize $1$}}
\put(30.00,20.00){\line(0,-1){10.00}}
\put(101.00,38.00){\line(2,-3){19.00}}
\put(100.00,39.00){\line(0,-1){19.00}}
\put(99.00,38.00){\line(-1,-3){9.50}}
\put(60.00,70.00){\line(4,-3){38.00}}
\put(80.00,28.00){\line(0,-1){18.00}}
\put(61.50,48.50){\line(1,-1){17.00}}
\put(60.00,48.00){\line(0,-1){38.00}}
\put(41.00,28.00){\line(1,-2){9.00}}
\put(40.00,28.00){\line(0,-1){8.00}}
\put(38.00,28.00){\line(-1,-1){18.00}}
\put(58.50,48.50){\line(-1,-1){17.00}}
\put(60.00,52.00){\line(0,1){18.00}}
\put(60.00,70.00){\line(-1,-1){60.00}}
\put(60.00,90.00){\line(0,-1){20.00}}
\put(60.00,90.00){\makebox(0.00,0.00)[b]{\boxed{\mbox {\rm \small root}}}}
\end{picture}}
\caption{\label{fig2}
An $(8;3,3,1,3)$-tree representing an operation
in $\Big^8_{3,3,1,3}$. It has 4 white vertices, 2 black
vertices and 2 stubs. We use the convention that directed edges
point upwards so the root is always on the top.}
\end{figure}
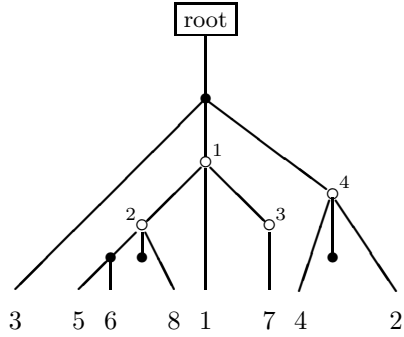

Each $(l;\Rada k1n)$-tree $T$ as in Definition~\ref{d2b} has
its {\em signature\/} $\sigma(T)= \pm 1$ defined as follows. 
Since $T$ is planar, its white vertices are naturally
linearly ordered by walking around the tree counterclockwise,
starting at the root. The first white vertex which one meets is the
first one in this linear order, 
the next white vertex different from the first one is the second in
this linear order, etc. For instance, the labels of the
tree in Figure~\ref{fig2} agree with the ones given by
the natural order, which of course need not always be the case. 
 
One is therefore given a function $w \mapsto p(w)$ that assigns to
each white vertex $w$ of the tree $T$ its position $p(w) \in \{\rada
1n\}$ in the linear order described above. This defines a
permutation $\sigma \in \Sigma_n$ by $\sigma(i) := p(w_i)$, where
$w_i$ is the white vertex labelled by $i$, $1 \leq i \leq n$. Let,
finally, $\sigma(T)$ be the Koszul sign of $\sigma$ permuting $n$
variables $\Rada v1n$ of degrees $\rada{k_1-1}{k_n-1}$, respectively.
In other words, $\sigma(T)$ is determined by
\begin{equation}
\label{koleno}
\sigma(T)\cdot v_1 \land\cdots \land v_n 
= v_{\sigma(1)}\land \cdots\land  v_{\sigma(n)},
\end{equation}
satisfied in the free graded commutative associative
algebra generated by $\Rada v1n$.

An $(l;\Rada k1n)$-tree $T$ determines\label{boli-mne-koleno} 
the natural operation $O_T \in B(A)^l_{\Rada k1n}$
given by decorating, for each $1 \leq i \leq n$, the $i$th white
vertex by $f_i \in \CH {k_i-1}$, the black vertices by the iterated
multiplication, the special vertices by the unit $1 \in A$, and
performing the composition along the tree. The result is then
multiplied by the signature $\sigma(T)$ defined above.

When evaluating on concrete elements, we apply 
the Koszul sign rule and use 
the `desuspended' degrees, that is $f : \otexp An \to A$
is assigned degree $n-1$ and $a \in A$ degree $-1$, 
see~\ref{Jak_to_dopadne_s_Jarkou?}.
For instance, the tree in
Figure~\ref{fig2} represents the operation
\[
O(f_1,f_2,f_3,f_4)(\Rada a18) :=
- a_3 f_1(f_2(a_5a_6,1,a_8),a_1,f_3(a_7))f_4(a_4,1,a_2),
\]
$\Rada a18 \in A$,
where, as usual, we omit the symbol for the iteration of the 
associative multiplication $\mu$. The minus sign in the right hand
side follows from the
Koszul rule explained above. The exceptional $(1;)$-tree \exeptional\
represents the identity $\id \in \CH 0$. 

\begin{notation}
For each $l,\Rada k1n \geq 0$ denote by $B^l_{\Rada k1n}$ the free
abelian group spanned by all $(l,\Rada k1n)$-trees. The
correspondence $T \mapsto O_T$ defines,  for each associative algebra 
$A$, a linear epimorphism $\omega_A : B^l_{\Rada k1n} \epi
B(A)^l_{\Rada k1n}$.
\end{notation}

Let $T'$ be an $(l';\Rada {k'}1n)$-tree, $T''$ an $(l'';\Rada
{k''}1m)$-tree and assume that $l''= k'_i$ for some $1 \leq i \leq n$.
The {\em $i$th vertex insertion\/} assigns to
$T'$ and $T''$ the tree $T' \circ_i T''$ obtained by replacing the
white vertex of $T'$ labelled $i$ by $T''$. It may happen that
this replacement creates edges connecting black vertices. In that
case it is followed by collapsing these edges. The above construction
extends into a linear operation
\[
\circ_i : B^{l'}_{\Rada {k'}1n} \otimes B^{l''}_{\Rada {k''}1m} 
\to B^{l'}_{\Rada {k'}1{i-1},\Rada {k''}1m,\Rada {k'}{i+1}n},\
1 \leq i \leq n,\ l''= k'_i.
\]
Recall the following:

\begin{proposition}[\cite{batanin-markl}]
The spaces $B^l_{\Rada k1n}$ assemble into an 
$\bbN$-coloured operad $B$ with the operadic composition given by the vertex
insertion and the symmetric group relabelling the white vertices. 
With this structure, the maps $\omega_A :\! B^l_{\Rada k1n} \hskip -.3em \epi
B(A)^l_{\Rada k1n}$  form an epimorphism $\omega_A : B \epi
B(A)$ of $\bbN$-coloured operads.
\end{proposition}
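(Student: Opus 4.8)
\noindent\emph{Proof proposal.}
The assertion has two parts: that the collection $B=\{B^l_{\Rada k1n}\}$ is an $\bbN$-coloured operad under vertex insertion and white-vertex relabelling, and that for every unital associative $A$ the maps $\omega_A$ form a morphism of $\bbN$-coloured operads onto $B(A)$. Since the underlying combinatorics is grafting of planar rooted trees, my plan is to isolate the only two ingredients that are not purely formal --- the edge-collapsing conventions built into $\circ_i$, and the signature $\sigma(T)$ of~\eqref{koleno} --- and then to reduce the compatibility of $\omega_A$ with the operad structure to a single sign identity for $\sigma$.

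First I would fix the operad data on $B$. The unit in colour $c$ is the corolla with one white vertex of arity $c$ and legs $\rada1c$; the exceptional trees of Definition~\ref{d2b} play the role of the relevant $n=0$ elements. The group $\Sigma_n$ acts on $B^l_{\Rada k1n}$ by permuting the white-vertex labels $\rada1n$ together with the input colours, with no sign. The partial composition $\circ_i$ is vertex insertion: graft $T''$ onto the white vertex of $T'$ labelled $i$ (matching its $k'_i=l''$ inputs with the $l''$ legs of $T''$ and its output edge with the root edge of $T''$), then collapse the internal edges that would connect two black vertices, or a black with a special vertex, under the conventions that adjacent black vertices merge into one (keeping the iterated-multiplication reading) and that a black--special edge is absorbed by deleting the corresponding unit input of the black vertex, with further simplification if its arity drops below $2$. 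I would then check colour-matching, the unit axioms, $\Sigma$-equivariance of $\circ_i$, and associativity; all but associativity are immediate from the planar picture, and for associativity the only real point is that the collapsing/absorption step is confluent and commutes with subsequent graftings, which follows from associativity of iterated multiplication and from $1$ being a two-sided unit. This establishes that $B$ is an $\bbN$-coloured operad, and uses nothing about $A$.

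Next I would prove that $\omega_A$ is a morphism. Because the collapsing/absorption step modifies only black and special vertices, the counterclockwise walk-order of the white vertices of $T'\circ_iT''$ is that of $T'$ with the single vertex labelled $i$ replaced, in place, by the walk-order of $T''$, and a white-vertex relabelling merely relabels that order. Unwinding the definition of $O_T$, using that the collapsing/absorption step does not change the resulting operation (again associativity and unitality of $\mu$) and that the $i$th input of $O_{T'}$ carries the same degree $k'_i-1$ as the output of $O_{T''}$, one sees that the identity $\omega_A(T'\circ_iT'')=\omega_A(T')\circ_i\omega_A(T'')$, where on the right $\circ_i$ is the composition in the endomorphism operad of the $\bbN$-coloured collection $\{\CH{n-1}\}_{n\ge0}$ and thus carries the Koszul sign $(-1)^{(\deg O_{T''})\sum_{j<i}(k'_j-1)}$, is equivalent to the purely combinatorial identity
\[
\sigma(T'\circ_iT'')=(-1)^{(\deg O_{T''})\sum_{j<i}(k'_j-1)}\,\sigma(T')\,\sigma(T'').
\]
Similarly $\omega_A(T^\sigma)=\omega_A(T)^\sigma$ is equivalent to $\sigma(T^\sigma)=\chi(\sigma)\,\sigma(T)$, where $\chi(\sigma)$ is the Koszul sign of $\sigma$ permuting variables of degrees $\rada{k_1-1}{k_n-1}$ --- precisely the sign of $\sigma$ acting in the endomorphism operad. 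Both identities follow from the standard ``block substitution'' formula for the sign of a permutation read off from a planar order. Compatibility of $\omega_A$ with the units and with the constants $\mu,\id,1$ of~\ref{bIG}(b) is immediate, and the $(-1)^{il}$ of~\ref{bIG}(a) is absorbed into the same bookkeeping.

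Finally, levelwise surjectivity of each $\omega_A$ is already recorded in the Notation preceding the proposition: every natural operation in the sense of~\ref{bIG} is a linear combination of composites of the elementary operations (a)--(c), and reading such a composite off as a planar tree --- an $\circ_i$ becomes a grafting of white corollas (with higher black vertices arising from the collapsing step), the constants $\mu,\id,1$ become elementary atoms (a binary black vertex and the two exceptional trees of Definition~\ref{d2b}), and a permutation of inputs becomes a relabelling of legs --- exhibits it as $O_T$. Combined with the previous paragraph this yields the epimorphism $\omega_A\colon B\epi B(A)$ of $\bbN$-coloured operads. The main obstacle is the sign identity above: verifying that $\sigma(T)$ transforms under grafting and relabelling by exactly the Koszul signs built into the coloured endomorphism operad of $\{\CH{n-1}\}_{n\ge0}$; the confluence of the black--black and black--special collapsing in the first step is the only other point that is not completely routine.
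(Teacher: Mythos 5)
This proposition is stated in the paper as a recollection from \cite{batanin-markl} and is given no proof here, so there is no in-paper argument to compare yours against; I can only assess your proposal on its own terms. As an outline it is sound, and you correctly isolate the two places where the content lies: the rewriting conventions needed to make vertex insertion land back in the set of $(l;\Rada k1n)$-trees, and the compatibility of the signature $\sigma(T)$ of~(\ref{koleno}) with the Koszul signs of the coloured endomorphism operad of $\{\CH{n-1}\}_{n\ge 0}$. Two remarks. First, your treatment of the collapsing step is actually more careful than the text of Section~\ref{s1}, which only mentions contracting black--black edges: grafting can equally well create a special--black edge (a stub of $T'$ feeding a leg of $T''$ that terminates at a black vertex), and your absorption convention, including the degenerate cases where a black vertex's arity drops below $2$, is a necessary supplement; its confluence is the combinatorial shadow of associativity and unitality of $\mu$, as you say, and must be checked at the tree level since $B$ is defined independently of $A$. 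Second, the displayed sign identity for $\sigma(T'\circ_i T'')$ and the equivariance identity $\sigma(T^\sigma)=\chi(\sigma)\sigma(T)$ are asserted rather than verified; they do follow from the cocycle property of Koszul signs under block substitution of the planar order, but since every nontrivial claim of the proposition funnels through them, a complete proof would have to write out that computation (this is presumably what occupies the corresponding argument in \cite{batanin-markl}). With those two points supplied, your reduction of surjectivity to the observation that composites of the elementary operations (a)--(c) are exactly the operations $O_T$ closes the argument.
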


In~\cite{batanin-markl} we formulated the following important:

\begin{definition}
\label{generic}
A unital associative algebra $A$ is {\em generic\/} if the
map $\omega_A \!:\! B \epi B(A)$ is an isomorphism.
\end{definition}

In~\cite{batanin-markl} we also proved that generic algebras
exist; the free associative unital algebra $U := {\mathbb T}
(x_1,x_2,x_3,\ldots)$ generated
by countably many generators $x_1,x_2,x_3,\ldots$ is an example.
We may therefore {\em define\/} the operad $B$ alternatively as the operad of
natural operations on the Hochschild cochain complex of a {\em generic
algebra\/}. 

The differentials~(\ref{popozitri_Praha}) clearly translate, for a
generic $A$, to the tree language of the operad $B$ as follows.
The component $\pa_i$, $1 \leq i \leq n$, of the differential $\pa =
\pa_1 + \cdots + \pa_n$ replaces the white vertex of an $(l;\Rada
k1n)$-tree $T$
labelled $i$ with $k_i \geq 1$ inputs by the linear combination
\begin{equation}
\label{prijede_dnes_Jarka?}
\raisebox{-1.5em}{\rule{0cm}{0cm}}
\lwhite 
+ \hskip .5em
\rwhite 
+
(-1)^{k_i+1} \sum_{1 \leq s \leq k_i-1}
\cwhite
\end{equation}
in which the white vertex has $k_i - 1$ inputs and retains the label
$i$. The result is then multiplied by the overall sign in the second
line of~(\ref{popozitri_Praha}).  In the summation
of~(\ref{prijede_dnes_Jarka?}), the black binary vertex is inserted
into the $s$th input of the white vertex. If the $i$th white vertex of
$T$ has no inputs then $\pa_i(T)=0$.

The differential $\delta$ replaces an $(l;\rada{k_1}{k_n})$-tree
symbolized by the triangle \hskip .2em $\triangle$ \hskip .2em with $l$ inputs 
by the linear combination
\[
\raisebox{-1.5em}{\rule{0cm}{0cm}}
\lamptriangle 
+ \hskip .5em \ramptriangle
+
(-1)^{l} \sum_{1 \leq s \leq l}
\camptriangle
\] 
If a replacement above creates an edge connecting
black vertices, it is followed by collapsing
these edges. 

We finally define the arity $n$ piece of the operad of natural operations as
\[
\Big^*(n) := \prod_{l - (k_1 + \cdots + k_n) + n -1= *} {B^l_{\Rada k1n}},
\]
with the degree $+1$ differential $d : \Big^* \to \Big^{*+1}$ 
defined by $d : = (\pa_1 + \cdots + \pa_n) - \delta$. It is evident that
the collection $\Big = \{\Big ^*(n)\}_{n \geq 0}$, 
with the operadic composition inherited from the inclusion $\Big
\subset \Endop_{\CH*}$ for $A$ generic, is a dg-operad.

The structure of the operad $\Big$ is visualized in
Figure~\ref{jarka1}. We emphasize that the degree $m$-piece of $\Big(n)$ is the
direct {\em product\/}, not the direct sum, of elements on the
diagonal $p+q =m -n +1$ in the $(p,q)$-plane. It follows from our
definitions that the Hochschild complex $\CH*$ of an arbitrary unital
associative $A$ is a natural $\Big$-algebra.
\begin{figure}
{% Picture saved by xtexcad 2.4
\unitlength=1pt
\begin{picture}(200.00,163.00)(-90.00,40.00)
\put(150.00,52){\makebox(0.00,0.00){$\vdots$}}
\put(100.00,52){\makebox(0.00,0.00){$\vdots$}}
\put(50.00,52){\makebox(0.00,0.00){$\vdots$}}
\put(200.00,80.00){\makebox(0.00,0.00){$\cdots$}}
\put(200.00,110.00){\makebox(0.00,0.00){$\cdots$}}
\put(200.00,140.00){\makebox(0.00,0.00){$\cdots$}}
\put(200.00,170.00){\makebox(0.00,0.00){$\cdots$}}
\put(200.00,200.00){\makebox(0.00,0.00){$\cdots$}}
\put(150.00,120.00){\vector(0,1){10.00}} %To je chybejici vektor
\put(100.00,90.00){\vector(0,1){10.00}}
\put(50.00,90.00){\vector(0,1){10.00}}
\put(150.00,90.00){\vector(0,1){10.00}}
\put(100.00,60.00){\vector(0,1){10.00}}
\put(50.00,60.00){\vector(0,1){10.00}}
\put(150.00,60.00){\vector(0,1){10.00}}
\put(100.00,120.00){\vector(0,1){10.00}}
\put(50.00,120.00){\vector(0,1){10.00}}
\put(150.00,150.00){\vector(0,1){10.00}}
\put(100.00,150.00){\vector(0,1){10.00}}
\put(50.00,150.00){\vector(0,1){10.00}}
\put(150.00,180.00){\vector(0,1){10.00}}
\put(100.00,180.00){\vector(0,1){10.00}}
\put(50.00,180.00){\vector(0,1){10.00}}
\put(165.00,140.00){\vector(1,0){25.00}}
\put(115.00,140.00){\vector(1,0){20.00}}
\put(65.00,140.00){\vector(1,0){20.00}}
\put(10.00,140.00){\vector(1,0){25.00}}
\put(165.00,110.00){\vector(1,0){25.00}}
\put(115.00,110.00){\vector(1,0){20.00}}
\put(65.00,110.00){\vector(1,0){20.00}}
\put(10.00,110.00){\vector(1,0){25.00}}
\put(165.00,80.00){\vector(1,0){25.00}}
\put(115.00,80.00){\vector(1,0){20.00}}
\put(65.00,80.00){\vector(1,0){20.00}}
\put(10.00,80.00){\vector(1,0){25.00}}
\put(10.00,170.00){\vector(1,0){25.00}}
\put(165.00,170.00){\vector(1,0){25.00}}
\put(115.00,170.00){\vector(1,0){20.00}}
\put(65.00,170.00){\vector(1,0){20.00}}
\put(160.00,200.00){\vector(1,0){30.00}}
\put(110.00,200.00){\vector(1,0){30.00}}
\put(60.00,200.00){\vector(1,0){30.00}}
\put(150.00,80.00){\makebox(0.00,0.00){$B(n)^2_3$}}
\put(150.00,110.00){\makebox(0.00,0.00){$B(n)^2_2$}}
\put(150.00,140.00){\makebox(0.00,0.00){$B(n)^2_1$}}
\put(150.00,170.00){\makebox(0.00,0.00){$B(n)^2_0$}}
\put(100.00,80.00){\makebox(0.00,0.00){$B(n)^1_3$}}
\put(100.00,110.00){\makebox(0.00,0.00){$B(n)^1_2$}}
\put(100.00,140.00){\makebox(0.00,0.00){$B(n)^1_1$}}
\put(100.00,170.00){\makebox(0.00,0.00){$B(n)^1_0$}}
\put(50.00,80.00){\makebox(0.00,0.00){$B(n)^0_3$}}
\put(50.00,110.00){\makebox(0.00,0.00){$B(n)^0_2$}}
\put(50.00,140.00){\makebox(0.00,0.00){$B(n)^0_1$}}
\put(50.00,170.00){\makebox(0.00,0.00){$B(n)^0_0$}}
\put(0.00,80.00){\makebox(0.00,0.00){$0$}}
\put(0.00,110.00){\makebox(0.00,0.00){$0$}}
\put(0.00,140.00){\makebox(0.00,0.00){$0$}}
\put(0.00,170.00){\makebox(0.00,0.00){$0$}}
\put(150.00,200.00){\makebox(0.00,0.00){$0$}}
\put(100.00,200.00){\makebox(0.00,0.00){$0$}}
\put(50.00,200.00){\makebox(0.00,0.00){$0$}}
\end{picture}}
\caption{
The structure of the operad $\Big$. In the above
diagram, $B(n)^m_k := \prod_{k_1+\cdots +k_n =k}
B^m_{k_1,\ldots,k_n}$. The vertical arrows are the simplicial
differentials $\pa$ and the horizontal arrows are the cosimplicial
differentials~$\delta$.\label{jarka1}}
\end{figure}

\begin{convention}
\label{syd}
{}From now on, we will assume that $A$ is a {\em generic algebra\/} in
the sense of Definition~\ref{generic} and 
make no distinction between natural operations on the Hochschild
complex of $A$
and the corresponding linear combinations of trees.
\end{convention}

\begin{variant}
\label{varr}
An important suboperad of $\Big$ is the suboperad $\nBr$ generated
by trees {\em without\/} stubs and without \stub. 
The operad $\nBr$ is the operad of
all natural multilinear operations on the Hochschild complex of a~{\em
non-unital\/} generic associative algebra. It is generated by natural
operations (a)--(c) above but without the unit $1 \in \CH {-1}$ in
(b). Let us denote by $\Bn^l_{\Rada k1n}$ the space of all
operations~(\ref{JarkA}) of this restricted type.
An important feature of the operad $\nBr$ is that it is, in a
certain sense, bounded. Indeed, one may easily prove that
$\Bn^l_{\Rada k1n} =0$ if $k_1 + \cdots + k_n -l \geq n$,   
see Figure~\ref{jArka}.

\begin{figure}
{% Picture saved by xtexcad 2.4
\unitlength=1pt
\begin{picture}(200.00,203.00)(-85.00,0.00)
\put(150.00,117.50){\makebox(0.00,0.00){$\vdots$}}
\put(100.00,117.50){\makebox(0.00,0.00){$\vdots$}}
\put(50.00,117.50){\makebox(0.00,0.00){$\vdots$}}
\put(200.00,0.00){\makebox(0.00,0.00){$\cdots$}}
\put(200.00,30.00){\makebox(0.00,0.00){$\cdots$}}
\put(200.00,60.00){\makebox(0.00,0.00){$\cdots$}}
\put(200.00,90.00){\makebox(0.00,0.00){$\cdots$}}
\put(200.00,140.00){\makebox(0.00,0.00){$\cdots$}}
\put(200.00,170.00){\makebox(0.00,0.00){$\cdots$}}
\put(200.00,200.00){\makebox(0.00,0.00){$\cdots$}}
\put(150.00,10.00){\vector(0,1){10.00}}
\put(150.00,40.00){\vector(0,1){10.00}}
\put(100.00,40.00){\vector(0,1){10.00}}
\put(150.00,70.00){\vector(0,1){10.00}}
\put(100.00,70.00){\vector(0,1){10.00}}
\put(50.00,70.00){\vector(0,1){10.00}}
\put(150.00,100.00){\vector(0,1){10.00}}
\put(100.00,100.00){\vector(0,1){10.00}}
\put(150.00,120.00){\vector(0,1){10.00}}
\put(100.00,120.00){\vector(0,1){10.00}}
\put(50.00,120.00){\vector(0,1){10.00}}
\put(50.00,100.00){\vector(0,1){10.00}}
\put(150.00,150.00){\vector(0,1){10.00}}
\put(100.00,150.00){\vector(0,1){10.00}}
\put(50.00,150.00){\vector(0,1){10.00}}
\put(150.00,180.00){\vector(0,1){10.00}}
\put(100.00,180.00){\vector(0,1){10.00}}
\put(50.00,180.00){\vector(0,1){10.00}}
\put(160.00,0.00){\vector(1,0){30.00}}
\put(170.00,30.00){\vector(1,0){20.00}}
\put(110.00,30.00){\vector(1,0){20.00}}
\put(165.00,60.00){\vector(1,0){25.00}}
\put(115.00,60.00){\vector(1,0){20.00}}
\put(60.00,60.00){\vector(1,0){25.00}}
\put(170.00,90.00){\vector(1,0){20.00}}
\put(120.00,90.00){\vector(1,0){10.00}}
\put(70.00,90.00){\vector(1,0){10.00}}
\put(10.00,90.00){\vector(1,0){20.00}}
\put(165.00,140.00){\vector(1,0){25.00}}
\put(115.00,140.00){\vector(1,0){20.00}}
\put(65.00,140.00){\vector(1,0){20.00}}
\put(10.00,140.00){\vector(1,0){25.00}}
\put(10.00,170.00){\vector(1,0){25.00}}
\put(165.00,170.00){\vector(1,0){25.00}}
\put(115.00,170.00){\vector(1,0){20.00}}
\put(65.00,170.00){\vector(1,0){20.00}}
\put(160.00,200.00){\vector(1,0){30.00}}
\put(110.00,200.00){\vector(1,0){30.00}}
\put(60.00,200.00){\vector(1,0){30.00}}
\put(150.00,30.00){\makebox(0.00,0.00){$\Bn(n)^2_{n+1}$}}
\put(150.00,60.00){\makebox(0.00,0.00){$\Bn(n)^2_n$}}
\put(150.00,90.00){\makebox(0.00,0.00){$\Bn(n)^2_{n-1}$}}
\put(150.00,140.00){\makebox(0.00,0.00){$\Bn(n)^2_1$}}
\put(150.00,170.00){\makebox(0.00,0.00){$\Bn(n)^2_0$}}
\put(100.00,60.00){\makebox(0.00,0.00){$\Bn(n)^1_n$}}
\put(100.00,90.00){\makebox(0.00,0.00){$\Bn(n)^1_{n-1}$}}
\put(100.00,140.00){\makebox(0.00,0.00){$\Bn(n)^1_1$}}
\put(100.00,170.00){\makebox(0.00,0.00){$\Bn(n)^1_0$}}
\put(50.00,90.00){\makebox(0.00,0.00){$\Bn(n)^0_{n-1}$}}
\put(50.00,140.00){\makebox(0.00,0.00){$\Bn(n)^0_1$}}
\put(50.00,170.00){\makebox(0.00,0.00){$\Bn(n)^0_0$}}
\put(150.00,0.00){\makebox(0.00,0.00){$0$}}
\put(100.00,30.00){\makebox(0.00,0.00){$0$}}
\put(50.00,60.00){\makebox(0.00,0.00){$0$}}
\put(0.00,90.00){\makebox(0.00,0.00){$0$}}
\put(0.00,140.00){\makebox(0.00,0.00){$0$}}
\put(0.00,170.00){\makebox(0.00,0.00){$0$}}
\put(150.00,200.00){\makebox(0.00,0.00){$0$}}
\put(100.00,200.00){\makebox(0.00,0.00){$0$}}
\put(50.00,200.00){\makebox(0.00,0.00){$0$}}
\end{picture}}
\caption{
The structure of the non-unital operad $\nBr$. In the diagram, 
$\Bn(n)^m_k := \prod_{k_1+\cdots +k_n =k}
\Bn^m_{k_1,\ldots,k_n}$.
\label{jArka}}
\end{figure}

One also has the quotient $\NormB$\label{Jary} of the collection $\Big$
modulo the trees with stubs. As explained
in~\cite{batanin-markl}, $\NormB$ forms an operad which is in fact the
componentwise simplicial normalization of $\Big$. 
The operad $\NormB$ acts on the {\em normalized\/} Hochschild
complex of a {\em unital\/} algebra. One has the diagram of
operad maps
\begin{equation}
\label{koleno_porad_boli}
\nBr \stackrel\iota\hookrightarrow \Big \stackrel\pi\twoheadrightarrow \NormB,
\end{equation}
in which the projection $\pi$ is a weak equivalence and the components
$\pi\iota(n)$ of the composition $\pi\iota$ are isomorphisms for each
$n \geq 1$. If $\mathfrak{U}$ denotes the functor that replaces the arity
zero component of a dg-operad by the trivial abelian group, then
$\mathfrak{U}(\pi\iota)$ is a dg-operad isomorphism $\mathfrak{U}(\nBr) \cong
\mathfrak{U}(\NormB)$.
\end{variant}

\begin{TT}\label{TTTT}
There is also a suboperad $\Tam$ of $\Big$ generated by\label{tamm}
elementary operations of types~(a) and~(b) only, without the use of
permutations in~(c). Its arity-$n$ piece equals
\[
\Tam^*(n) :=\prod_{l - (k_1 + \cdots + k_n) + n -1 = *} {T^l_{\Rada k1n}}, 
\]
where operations in $T^l_{\Rada k1n}$ are represented by linear
combinations of {\em unlabeled\/} $(l;\Rada k1n)$-trees, that
is,\label{stryc} planar trees as in Definition~\ref{d2b} but without
the labels of the legs. The inclusion $T^l_{\Rada k1n} \hookrightarrow
B^l_{\Rada k1n}$ is realized by labeling the legs of an unlabeled tree
from the left to the right in the orientation given by the planar
embedding.  The groups $T^l_{\Rada k1n}$ form a coloured operad $T$
and the inclusion above is the inclusion of operads $T \hookrightarrow
B$.

The operad $\Tam$ is the condensation of $T$ and it is a chain version
of the operad considered in~\cite[Section~3]{tamarkin-tsygan:LMP01}.
There is also the\label{tammm-non} operad $\nTam := \nBr \cap \Tam$
generated by unlabeled trees without stubs and without \stub.  It is
clear that $\nTam$ is bounded in the same way as $\nBr$.  We finally
have the normalized Tamarkin-Tsygan operad $\NormT$
defined\label{tammm} as the image of $\Tam$ under the canonical
projection $\pi: \Big \epi \NormB$. One has the diagram $\nTam
\stackrel\iota\hookrightarrow \Tam \stackrel\pi\twoheadrightarrow
\NormT$ with the properties analogous to that
of~(\ref{koleno_porad_boli}).

Summing up, we have the following $\bbN$-coloured operads:

- the operad $B$ whose piece $B^l_{\Rada k1n}$ equals 
the span of the set of all $(l;\Rada k1n)$-trees,

- the operad $\Bn$ whose piece $\Bn^l_{\Rada k1n}$ is
the span of the set of all $(l;\Rada k1n)$-trees without stubs and
without \stub\ if $n=l=0$,

- the operad $T$ whose piece $T^l_{\Rada k1n}$ equals
the span of the set of all unlabeled $(l;\Rada k1n)$-trees, and

- the operad $\Tn = T \cap \Bn$ whose piece
$\Tn^l_{\Rada k1n}$ is the span of the set of all unlabeled $(l;\Rada
k1n)$-trees without stubs and
without \stub\ if $n=l=0$.
\end{TT}

We close this section by recalling the isomorphism between the set of
unlabeled $(l;\Rada k1n)$-trees and $\Lat 2(\Rada k1n;l)$ constructed
in the proof\label{jaruskA} of~\cite[Proposition~2.14]{bb}. Let $T$ be
an unlabeled $(l;\Rada k1n)$-tree. We run around $T$ counterclockwise
via the unique edge-path that begins and ends at the root and goes
through each edge of $T$ exactly twice (in opposite directions).  The
lattice path $\varphi_T : [l+1] \to [k_1 + 1] \ot \cdots \ot [k_n +
1]$ corresponding to $T$ starts at the `lower left' corner with
coordinates $(\rada 00)$ and advances according the following rules:

- when the edge-path hits the white vertex labeled $i$, $1 \leq i \leq
  n$, we advance $\varphi_T$ in the direction of the vector $d_i
  :=(0,\ldots,1,\ldots,0)$ (1 at the $i$th place),

- when the edge-path hits the leg, we do not move but
  increase the marking of our position by one.

The correspondence  $T \mapsto \varphi_T$ is
illustrated in Figure~\ref{JarunkA}. 

\begin{proposition}[\cite{bb}, Proposition~2.14]
\label{pozitri_uvidim_Jarusku} 
The above correspondence induces an isomorphism of coloured operads
$T$ and $\Lat 2,$ and hence, the isomorphism between
\hbox{$\ss\!\Tam$} and $|\Lat 2|$.\footnote{The operadic suspension
$\ss$ applied to $\Tam$ is a consequence of
  Convention~\ref{Jak_to_dopadne_s_Jarkou?}.}
\end{proposition}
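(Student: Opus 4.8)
The plan is to prove the coloured-operad isomorphism componentwise and then pass to condensation. Fix non-negative integers $l,\Rada k1n$. \emph{Well-definedness of $T\mapsto\varphi_T$:} the counterclockwise edge-walk around an unlabeled $(l;\Rada k1n)$-tree $T$ meets the white vertex labelled $i$ exactly $k_i+1$ times (once for each of the $k_i+1$ sectors between its consecutive incident edges) and each of the $l$ legs exactly once, so $\varphi_T$ runs from $(\rada 00)$ to $(\rada{k_1+1}{k_n+1})$ and carries a marking of total weight $l$; hence $\varphi_T\in\Latnic(\Rada k1n;l)$. Let $u$ be the word on $\{\rada 1n\}$ recording the successive coordinate directions taken by $\varphi_T$ (so the letter $i$ occurs $k_i+1$ times); for $1\le i<j\le n$ the projection $p_{ij}$ changes direction as often as the restriction $u|_{\{i,j\}}$ changes letter. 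In $T$, either one of the white vertices labelled $i,j$ lies in the subtree rooted at the other, so that restriction equals $i^{a}j^{k_j+1}i^{k_i+1-a}$ (or the symmetric word) with $1\le a\le k_i$, or neither does, and it equals $i^{k_i+1}j^{k_j+1}$ or $j^{k_j+1}i^{k_i+1}$. In every case $u|_{\{i,j\}}$ has at most three constant blocks, so $c(\varphi_T)\le 2$ and $\varphi_T\in\Lat 2(\Rada k1n;l)$.

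\emph{Bijectivity.} I would construct the inverse explicitly (this is the substance of \cite[Proposition~2.14]{bb}). From $(p,\mu)\in\Lat 2(\Rada k1n;l)$ form the direction word $u$ as above; the constraint $c(p)\le 2$ says precisely that, for all $i\ne j$, $u|_{\{i,j\}}$ has at most three constant blocks, which is exactly the combinatorial shadow of the planarity of a rooted tree. Concretely, the $a$th and $(a+1)$st occurrences of the letter $i$ in $u$ delimit the $a$th input slot of the white vertex labelled $i$, and $c(p)\le 2$ forces these slots to be nested consistently, hence to be the input edges of a well-defined planar rooted tree with white vertices $\rada{w_1}{w_n}$ of arities $\rada{k_1}{k_n}$; the marking $\mu$ then pins down the (unlabeled) legs, a marked-but-otherwise-empty slot forces a special vertex, and the interpolation of black vertices (with collapsing of black--black edges) is uniquely determined. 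One checks by inspection that this is a two-sided inverse to $T\mapsto\varphi_T$.

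\emph{Compatibility with the coloured-operad structures.} The unit of $\Lat c$ at colour $k$ is the identity functor $[k+1]\to[k+1]$, which is $\varphi_T$ for the $(k;k)$-tree consisting of a single white vertex of arity $k$ carrying all $k$ legs on its inputs. For composition, let $T'$ be an $(l';\Rada{k'}1n)$-tree and $T''$ an $(l'';\Rada{k''}1m)$-tree with $l''=k'_i$; the counterclockwise walk around $T'\circ_i T''$ coincides with the walk around $T'$ except that every visit to the white vertex labelled $i$ (there are $k'_i+1=l''+1$ of them) is replaced by a complete walk around $T''$, whose $l''$ legs are glued to the $l''$ input slots of that vertex. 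Unwinding the definition of operadic composition in $\Latnic$ (and noting that collapsing black--black edges on the tree side is invisible on the lattice side) gives $\varphi_{T'\circ_i T''}=\varphi_{T'}\circ_i\varphi_{T''}$; equivariance under the relabelling of white vertices and the corresponding permutation of hypercube factors is immediate from the same description. Hence the componentwise bijections assemble into an isomorphism of $\nonneg$-coloured operads $T\cong\Lat 2$.

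\emph{Passage to condensation.} Applying the condensation functor of Appendix~\ref{s0} to $T\cong\Lat 2$ gives an isomorphism of dg-operads. Its value on $\Lat 2$ is $|\Lat 2|$ by definition, and its value on $T$ — graded by the operadic convention of section~\ref{s1}, which places $T^l_{\Rada k1n}$ in degree $l-(k_1+\cdots+k_n)+n-1$ — is $\Tam$; since the totalization defining $|\Lat 2|$ instead places $\Lat 2(\Rada k1n;l)$ in degree $l-(k_1+\cdots+k_n)$, the two gradings differ on the arity-$n$ component by exactly the shift of an operadic suspension, so the isomorphism reads $\ss\Tam\cong|\Lat 2|$; this is the footnote's ``consequence of Convention~\ref{Jak_to_dopadne_s_Jarkou?}''. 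The main obstacle is the combinatorial core of the bijectivity step: proving that ``complexity $\le 2$'' is equivalent to the planarity of a rooted tree and that the inverse recipe (with its black and special vertices and its markings) is well defined and genuinely inverse to $T\mapsto\varphi_T$; granting this, the compatibility step amounts to matching two descriptions of the same composition, and the condensation step is formal.
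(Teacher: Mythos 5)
The paper does not prove this statement: it is quoted from \cite[Proposition~2.14]{bb}, and the text surrounding it only recalls the construction $T\mapsto\varphi_T$. Your proposal therefore reconstructs the argument of the cited reference rather than competing with a proof in the paper, and it does so along the natural and (as far as one can tell from \cite{bb}) intended lines: counting the $k_i+1$ angle-visits to each white vertex to see that $\varphi_T$ lies in $\Latnic(\Rada k1n;l)$, reducing the complexity bound to the three-block shape of the two-letter restrictions of the direction word, inverting via the nesting of occurrence-intervals, matching the walk around $T'\circ_i T''$ with the operadic composition of lattice paths, and identifying the degree shift by $n-1$ in arity $n$ that accounts for the operadic suspension $\ss$ of Convention~\ref{Jak_to_dopadne_s_Jarkou?}. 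Two remarks. First, the one clause that is wrong as written: ``a marked-but-otherwise-empty slot forces a special vertex'' has the dichotomy backwards. An input slot that is empty \emph{and unmarked} corresponds to a stub (special vertex); a slot that is marked but otherwise empty corresponds to legs (grafted directly if there is one, via a black vertex if there are at least two). Your adjacent clause about $\mu$ pinning down the legs shows you know this, but the inverse recipe must be stated with the correct rule or it produces the wrong tree. Second, the combinatorial core --- that complexity $\leq 2$ is exactly the nesting condition making the occurrence-intervals of the direction word the input slots of a planar rooted tree, and that the interpolation of black vertices (arity $\geq 2$, no black--black and no black--special edges) is then uniquely forced --- is asserted rather than carried out; you flag this honestly, and since it is precisely the content of \cite[Proposition~2.14]{bb}, deferring it is consistent with what the paper itself does.
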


\begin{figure}[t]
{% Picture saved by xtexcad 2.4
\unitlength=0.6pt
\begin{picture}(100.00,110.00)(-240.00,0.00)
\thicklines
\put(-130,0){
\put(50.00,100.00){\makebox(0.00,0.00)[b]{\boxed{\mbox {\rm \small root}}}}
\put(0.00,50.00){\makebox(0.00,0.00)[b]{$T:$}}
\put(90.00,30.00){\makebox(0.00,0.00){\scriptsize $1$}}
\put(60.00,60.00){\makebox(0.00,0.00){\scriptsize $2$}}
\put(50.00,50.00){\makebox(0.00,0.00){$\circ$}}
\put(80.00,20.00){\makebox(0.00,0.00){$\circ$}}
\put(70.00,30.00){\makebox(0.00,0.00){$\bullet$}}
\put(60.00,0.00){\line(1,3){10.00}}
\put(70.00,0.00){\makebox(0.00,0.00){$\bullet$}}
\put(20.00,20.00){\makebox(0.00,0.00){$\bullet$}}
\put(50.00,80.00){\makebox(0.00,0.00){$\bullet$}}
\put(30.00,60.00){\line(1,1){20.00}}
\put(100.00,0.00){\line(-1,1){17.00}}
\put(69.00,-3.00){\line(1,2){10.00}}
\put(77.00,23.00){\line(-1,1){24.00}}
\put(50.00,0.00){\line(0,1){47.00}}
\put(30.00,0.00){\line(-1,2){10.00}}
\put(0.00,0.00){\line(1,1){47.00}}
\put(50.00,101.00){\line(0,-1){47.00}}
}
\put(140,0){
\unitlength=1.8em
\thinlines
%horizontalni linky
\put(0,0){\line(1,0){3}}
\put(0,1){\line(1,0){3}}
\put(0,2){\line(1,0){3}}
\put(0,3){\line(1,0){3}}
\put(0,4){\line(1,0){3}}
%vertikalni linky
\put(0,0){\line(0,1){4}}
\put(1,0){\line(0,1){4}}
\put(2,0){\line(0,1){4}}
\put(3,0){\line(0,1){4}}
%cesta
\thicklines
\put(-1,1.5){\put(.1,.1){\makebox(0.00,0.00)[rb]{$\longmapsto \hskip
      2em \varphi_T:$}}}
\put(0,0){\makebox(0.00,0.00){$\bullet$}}
\put(0,0){\put(.1,.1){\makebox(0.00,0.00)[lb]{\scriptsize $1$}}}
\put(0,0){\vector(0,1){1}}
\put(0,1){\makebox(0.00,0.00){$\bullet$}}
\put(0,1){\put(.1,.1){\makebox(0.00,0.00)[lb]{\scriptsize $2$}}}
\put(0,1){\vector(0,1){1}}
\put(0,2){\makebox(0.00,0.00){$\bullet$}}
\put(0,2){\put(.1,.1){\makebox(0.00,0.00)[lb]{\scriptsize $1$}}}
\put(0,2){\vector(0,1){1}}
\put(0,3){\makebox(0.00,0.00){$\bullet$}}
\put(0,3){\put(.1,.1){\makebox(0.00,0.00)[lb]{\scriptsize $1$}}}
\put(0,3){\line(1,0){1}}
\put(1,3){\makebox(0.00,0.00){$\bullet$}}
\put(1,3){\put(.1,.1){\makebox(0.00,0.00)[lb]{\scriptsize $0$}}}
\put(1,3){\vector(1,0){1}}
\put(2,3){\makebox(0.00,0.00){$\bullet$}}
\put(2,3){\put(.1,.1){\makebox(0.00,0.00)[lb]{\scriptsize $1$}}}
\put(2,3){\line(1,0){1}}
\put(3,3){\makebox(0.00,0.00){$\bullet$}}
\put(3,3){\put(.1,.1){\makebox(0.00,0.00)[lb]{\scriptsize $0$}}}
\put(3,3){\vector(0,1){1}}
\put(3,4){\put(.1,.1){\makebox(0.00,0.00)[lb]{\scriptsize $0$}}}
}
\end{picture}}
\caption{\label{JarunkA}
An unlabeled $(6;2,2)$-tree $T$ and the corresponding lattice path
$\varphi_T \in \Lat c(2,2;6)$.}
\end{figure}
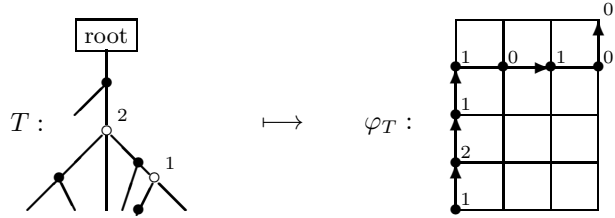

More conceptually, the difference between the $\bbN$-coloured operads 
$T$ and $B$ and the corresponding operads $\Tam$ and $\Big$ can be
explained as follows.  Let $O$ and $O_1$ be the categories of operads
and of nonsymmetric operads in the category of chain complexes
$\Chain$ correspondingly. There is the forgetful functor ${\it
  Des}_1: O\rightarrow O_1$ which forget the symmetric group
actions. Let $\Mon$ be the {\em nonsymmetric} operad for unital monoids.

\begin{definition} 
The category of {\it multiplicative nonsymmetric operads} is the
comma-category $\Mon/O_1$, see~\cite{gerstenhaber-voronov:IMRN95}. The category of {\it
  multiplicative operads} is the comma-category $\Mon/{\it Des}_1$.
\end{definition}
So, a multiplicative operad is an operad $A$ equipped with a structure
morphism $p:\Mon \rightarrow {\it Des}_1(A)$. Equivalently, by adjunction, a
structure morphism can be replaced by a morphism $\UAss \rightarrow A,$
where $\UAss$ is the operad for unital associative algebras.

The description in \cite[1.5.6]{berger-moerdijk:CM07} of the coloured operad
whose algebras are symmetric operads, readily implies the following
proposition which illuminates the main result of \cite{batanin-markl}.

\begin{proposition} 
The category of algebras over the coloured operad $T$ is isomorphic to
the category of multiplicative nonsymmetric operads.  The category of
algebras of the coloured operad $B$ is isomorphic to the category of
multiplicative operads.  Under this identification, the inclusion
$T\hookrightarrow B$ induces the forgetful functor from multiplicative
operads to nonsymmetric multiplicative operads.
\end{proposition}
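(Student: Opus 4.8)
The plan is to deduce the statement from the description, due to Berger and Moerdijk~\cite[1.5.6]{berger-moerdijk:CM07}, of the $\bbN$-coloured operad $\mathbf{Op}$ (respectively its nonsymmetric variant $\mathbf{nsOp}$) in $\Chain$ whose algebras are, respectively, symmetric operads and nonsymmetric operads, the colour $n$ being interpreted as the arity~$n$. First I would recall that, by the adjunction between $\Mon$-structures and maps out of $\UAss$ already noted, a multiplicative structure on an operad $A$ amounts to a \emph{multiplication} $\mu\in A(2)$ together with a \emph{unit} $e\in A(0)$ subject to $\mu\circ_1\mu=\mu\circ_2\mu$ and $\mu\circ_1 e=\mu\circ_2 e=\mathrm{id}$; indeed $\Mon$, as a nonsymmetric operad, is generated by one binary and one nullary operation modulo exactly these relations. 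Consequently the categories of multiplicative operads and of multiplicative nonsymmetric operads are again categories of algebras over $\bbN$-coloured operads $\mathbf{MOp}$ and $\mathbf{MnsOp}$, obtained from $\mathbf{Op}$ and $\mathbf{nsOp}$ by freely adjoining two nullary generators, of output colours $2$ and $0$, and imposing the three relations above.

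The heart of the proof is to identify $B$ with $\mathbf{MOp}$ and $T$ with $\mathbf{MnsOp}$ as $\bbN$-coloured operads. I would first observe that the elementary natural operations listed in~\ref{bIG} form a generating set for $B$: the insertions $\circ_i$, the identity $\mathrm{id}\in\CH0$, and the permutation operations of type (c) are precisely the generating operations of $\mathbf{Op}$ (partial composition, operadic unit, and symmetric-group action), while the two remaining constants $\mu\in\CH1$ and $1\in\CH{-1}$ are precisely the two extra multiplicative generators of $\mathbf{MOp}$. Under this dictionary an $(l;k_1,\dots,k_n)$-tree as in Definition~\ref{d2b} is exactly a normal form for a formal composite of these generators: a white vertex of arity $k_i$ records the variable $f_i$, a black vertex of arity $m\ge2$ records the $m$-fold iterate of $\mu$, and a special nullary vertex records $e=1$; the prohibition of edges between two black vertices, or between a black vertex and a special vertex, is exactly the reduced form forced by the associativity and unit relations of $\Mon$, and the collapsing of black--black edges after a vertex insertion is exactly the rewriting that restores this normal form. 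Finally the signature $\sigma(T)$ of~(\ref{koleno}) and the $\Sigma_n$-relabelling of white vertices reproduce the Koszul signs and the symmetric-group action built into $\mathbf{Op}$ under the degree convention of~\ref{Je_zdrojem_smutku.}. It then remains to check that the operadic composition on $B$ by vertex insertion, under these identifications, satisfies the defining relations of $\mathbf{MOp}$ and no others; this yields $B\cong\mathbf{MOp}$, hence an isomorphism of the categories of algebras, and the same argument with unlabelled trees and no permutation operations yields $T\cong\mathbf{MnsOp}$. Equivalently, $B$ (respectively $T$) is the free multiplicative (respectively multiplicative nonsymmetric) operad on the $\bbN$-collection having one generator in each arity, which is the monadic form of the main result of~\cite{batanin-markl}.

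For the last assertion I would observe that, under the two identifications, the inclusion $T\hookrightarrow B$ corresponds to the map of presentations that sends each named generator to the generator of the same name and discards the permutation operations; on algebras this is exactly the functor that retains the underlying nonsymmetric operad together with its multiplicative structure and forgets the $\Sigma$-action, i.e.\ the forgetful functor $\Mon/\mathit{Des}_1\to\Mon/O_1$ induced by $\mathit{Des}_1\colon O\to O_1$.

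I expect the \emph{main obstacle} to be bookkeeping rather than conceptual: one must verify that the relations among trees under vertex insertion are precisely the defining relations of the Berger--Moerdijk coloured operad augmented by the two $\Mon$-generators, with \emph{no} further relations produced by the edge-collapsing rule, and that the sign and degree conventions of~\ref{Je_zdrojem_smutku.} reproduce exactly the Koszul signs implicit in $\mathbf{Op}$. A minor point to settle is that $B$ and $T$ carry differentials, so strictly speaking the argument is run on the underlying graded $\bbN$-coloured operads; the differentials~(\ref{popozitri_Praha}) then simply equip the free multiplicative (nonsymmetric) operad with the Hochschild differential, which does not affect the statement about categories of algebras.
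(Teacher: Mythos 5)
The paper offers no actual proof of this proposition: it merely asserts that the statement follows readily from the description in \cite[1.5.6]{berger-moerdijk:CM07} of the coloured operad whose algebras are symmetric operads, and your argument is precisely an elaboration of that intended route, identifying $B$ and $T$ with the coloured operads for multiplicative and multiplicative nonsymmetric operads by reading the $(l;k_1,\dots,k_n)$-trees as normal forms for composites of the generators (white vertices as inputs, black vertices as iterated multiplication, special vertices as the unit, with the no-black-black-edge condition enforcing the monoid relations). Your dictionary is correct, so this is essentially the same approach as the paper's, just spelled out in more detail than the paper itself provides.
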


\section{Operads of braces}
\label{brr}

Throughout this section we use Convention~\ref{syd}.
There is another very important suboperad $\Br$ of $\Big$ generated by braces,
cup-products and the unit whose normalized version was introduced
in~\cite[Section~1]{mcclure-smith} under the notation $\mathcal H$. 
Let us recall its definition.
The operad $\Br$ is the suboperad of the operad 
$\Big$ generated by the following operations.

(a) The {\em cup product\/} $- \cup - :\CH * \ot \CH * \to \CH *$
defined by $f
\cup g := \mu(f,g)$.

(b) The constant $1 \in \CH {-1}$.

(c) The {\em braces\/} $-\{\rada --\} : 
\otexp {\CH *}n \to \CH*$, $n \geq 2$, given by
\begin{equation}
\label{braaace}
f\{\Rada g2n\} := \sum f(\rada {\id}{\id},g_2,\rada
{\id}{\id},g_n,\rada {\id}{\id}),
\end{equation}
where $\id$ is the identity map of $A$ and the summation runs over all
possible substitutions of $\Rada g2n$ (in that order) into
$f$. 

Notice that, for $f \in \CH k$ and $g \in \CH l$,  
the cup product $f \cup g \in \CH
{k+l+1}$ evaluated at $\Rada a0{k+l+1} \in A$ equals
\begin{equation}
\label{cuuup}
(f \cup g)(\Rada a0{k+l+1})= 
(-1)^{(k+1)l}
f(\Rada a0k)g(\Rada a{k+1}{k+l+1}),
\end{equation}
with the sign dictated by the Koszul rule. 
This formula differs from the original
one~\cite[Section~7]{gerstenhaber:AM63} due to a different degree
convention used here, see~\ref{Jak_to_dopadne_s_Jarkou?}. 
We leave as an exercise to write a
similar explicit formula for the brace.

The brace operad has also its non-unital version\label{brr-non} $\nuBr
:= \nBr \cap \Br$ generated by elementary operations (a) and (c). One
can verify\label{brrr} that both $\Br$ and $\nuBr$ are indeed
dg-suboperads of $\Big$, see~\cite{mcclure-smith}.  We also denote by
$\Norm(\Br) \subset \Norm(\Big)$ the image of $\Br$ under the\label{brr-norm}
projection $\Big \epi \NormB$. One has again an analog $\nuBr
\stackrel\iota\hookrightarrow \Br \stackrel\pi\twoheadrightarrow
\Norm(\Br)$ of~(\ref{koleno_porad_boli}).

Let us describe the
operad $\Br$, its suboperad $\nuBr$ and its quotient $\NormB$ in
terms of trees.

\begin{definition}
\label{Def11}
Let $\Rada k1n$ be integers. An {\em amputated $(\Rada
k1n)$-tree\/} is an $(0;\Rada k1n)$-tree in the sense of
Definition~\ref{d2b}. We denote by $A_{\Rada k1n}$ the (finite) set of
all amputated $(\Rada k1n)$-trees,  by  $\Norm(A)_{\Rada k1n}$ its subset
consisting of amputated $(\Rada k1n)$-trees without stubs and
$\nA_{\Rada k1n}$ the set that equals  $\Norm(A)_{\Rada k1n}$ for $n
\geq 1$ and is $\emptyset$ for $n=0$. 
\end{definition}

\begin{proposition}
\label{JaRkA}
For each $n \geq 0$ and $d \leq n-1$, there is a natural isomorphism
\[
w: \Span(\{A_{\Rada k1n};\ n-1 -(k_1 + \cdots +k_n) = d\})  \cong \Br^d(n)
\]
which restricts to the isomorphism (denoted by the same symbol)
\[
w: \Span(\{\nA_{\Rada k1n};\ n-1 -(k_1 + \cdots +k_n) = d\})  \cong
\NBrr d(n).
\]
and projects into the isomorphism (denoted again by the same symbol)
\[
w: \Span(\{\Norm(A)_{\Rada k1n};\ n-1 -(k_1 + \cdots +k_n) = d\})  \cong
\Norm(\Br)^d(n).
\]
\end{proposition}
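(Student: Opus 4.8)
The plan is to let $w$ be the \emph{whiskering} map. To an amputated $(\Rada k1n)$-tree $T$, whose white vertices have arities $\rada{k_1}{k_n}$, associate $w(T):=\sum_{s\ge 0}w_s(T)$, where $w_s(T)$ is the sum, taken with the $+1$ signs of the operadic convention~\ref{Jak_to_dopadne_s_Jarkou?}, of all $(s;\Rada{k'}1n)$-trees obtained from $T$ by grafting onto its white vertices $s$ additional legs, labelled $\rada 1s$, each graft increasing by one the arity of the white vertex it meets; extend $w$ linearly. Since each new leg raises one of the $k_i$ and the label count $l$ \emph{simultaneously} by one, the quantity $l-(k_1+\cdots+k_n)$, hence the cohomological degree $d=n-1-(k_1+\cdots+k_n)$ of Figure~\ref{jarka1}, is preserved, so $w(T)\in\Big^d(n)$ --- an element of the \emph{product}, not the sum, over $l$ --- with zero-leg component $w_0(T)=T$. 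The range $d\le n-1$ is precisely the one in which amputated $(\Rada k1n)$-trees exist (equivalently $k_1+\cdots+k_n\ge 0$); and since in an amputated tree legs would sit only on white vertices, the shape of the generators of $\Br$ forces $\Br^d(n)=0$ for $d>n-1$ as well.

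First I would identify this $w$ with the restriction to the $l=0$ stratum of the whiskering $w:\Brac 2\hookrightarrow|\Lat 2|$ of Proposition~\ref{ww}, followed by the identification of $|\Lat 2|$ with the suboperad $\Tam\subseteq\Big$ coming from Proposition~\ref{pozitri_uvidim_Jarusku} read in the operadic convention. Indeed, by Proposition~\ref{pozitri_uvidim_Jarusku} the set $A_{\Rada k1n}$ is a basis of $\Lat 2(\Rada k1n;0)$, under which grafting a leg marked $1$ corresponds to inserting an internal lattice point marked $1$ in formula~(\ref{JaRka}); thus $\bigoplus_d\Span(\{A_{\Rada k1n};\,n-1-\sum k_i=d\})$ spans the leftmost column of the bicomplex of Figure~\ref{jarka1}, i.e.\ (as a graded abelian group) the arity-$n$ part of $\Brac 2$. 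Injectivity of $w$ is then immediate: the amputation $\proj$ --- the projection onto the $l=0$ column, which erases all legs and leaves the black and special vertices untouched --- satisfies $\proj\circ w=\id$, exactly as in the proof of Theorem~\ref{jArKa}.

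It remains to show that the image of $w$ equals $\Br^d(n)$. Being the image of an operad morphism, $\Im w$ is a suboperad of $\Big$, and it contains the generators of $\Br$: the single special vertex whiskers to the unit $1\in\CH{-1}$; the tree with one black vertex carrying two white children labelled $1,2$ whiskers --- once one checks that the operadic Koszul sign produces the factor in~(\ref{cuuup}) --- to the cup product $f\cup g$; and the ``caterpillar'' with one white vertex of arity $n-1$ carrying the arity-zero white vertices $\rada 2n$ as its inputs, in order, whiskers to the brace $f\{\Rada g2n\}$, the grafted legs being exactly the $\id$-slots summed over in~(\ref{braaace}). Hence $\Br\subseteq\Im w$. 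For the reverse inclusion I would induct on the number of internal (white plus black) vertices of an amputated tree $T$: if the vertex adjacent to the root is black, with maximal subtrees $\Rada T1r$, then $w(T)=w(T_1)\cup\cdots\cup w(T_r)$; if it is a white vertex $i$ of arity $k$ with subtrees $\Rada T1k$, then $w(T)=f_i\{w(T_1),\dots,w(T_k)\}$, a special-vertex subtree contributing a copy of $1$; in both cases the inductive hypothesis puts the constituents, hence $w(T)$, in $\Br$. This yields $\Im w=\Br$ and, degreewise, the asserted isomorphism $w:\Span(\{A_{\Rada k1n};\,n-1-\sum k_i=d\})\cong\Br^d(n)$.

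The two refinements follow from the same map. A stub is an internal edge issuing from a special vertex, and whiskering neither creates nor destroys special vertices; so $w$ sends a stub-free amputated tree to a combination of stub-free leg-decorated trees, which by the induction above lie in the non-unital suboperad $\nuBr=\nBr\cap\Br$ (this gives the restricted isomorphism onto $\NBrr d(n)$), while a stub-bearing amputated tree is sent to a combination of stub-bearing trees, which the canonical projection $\pi:\Big\epi\NormB$ annihilates. Composing with $\pi$ and using that it restricts to the isomorphism $\nuBr\cong\Norm(\Br)$ (the brace analogue, for positive arity, of $\nBr\cong\NormB$; cf.\ Definition~\ref{Def11}) yields the projected isomorphism onto $\Norm(\Br)^d(n)$. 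I expect the genuine work to sit in the reverse inclusion of the third paragraph: one must match the case analysis precisely against the operadic composition of $\Brac 2$ and check that, in the operadic convention, every Koszul sign that arises --- including the signature $\sigma(T)$ --- collapses to $+1$, so that the whiskered caterpillar and cup trees reproduce~(\ref{braaace}) and~(\ref{cuuup}) verbatim rather than merely up to sign.
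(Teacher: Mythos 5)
Your proposal is correct and follows essentially the same route as the paper's proof: injectivity via the amputation projection being a left inverse of $w$, the inclusion $\Br\subseteq\Im(w)$ by checking the generators (unit, cup product, braces) are whiskerings, and the reverse inclusion by decomposing an amputated tree into the atomic trees via whiskered insertions, inducting on tree size with a case split on the colour of the root vertex. The only cosmetic differences are that the paper strips the stubs off first (composing with the arity-zero element $1$) and then inducts on internal edges rather than vertices, and that it works directly with trees instead of routing the setup through $\Brac 2\hookrightarrow|\Lat 2|$.
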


The map $w$ is defined in
formula~(\ref{Syd}) below.   
{}From the reasons apparent later we call it the
{\em whiskering\/}. The proof of the proposition is postponed to
page~\pageref{Jarca-pusa}. Before we give the definition of $w$, 
we illustrate the
notion of amputated trees in the following: 

\begin{example}
The space $\Br^*(0)$ is concentrated in degree $-1$, 
\[
\Br^*(0) = \Br^{-1}(0) = \Span(\AAblack),
\] 

\noindent 
while $\NBrr*(0) = 0 =\Span(\emptyset)$.
The space $\Br^{-d}(1)$ is, for $d \geq 0$, the span of the single element
\[
\AAd
\] 
while $\NBrr*(1) = \NBrr0(1) = {\Span (\AAnulamin)}$.
Similarly
\[
\Br^1(2) = \NBrr1(2) = 
\Span\left(\rule{0pt}{1.5em}\right.\AAparm12,\AAparm21
\left.\rule{0pt}{1.5em}\right),\
\NBrr{0}(2) = \Span\left(\rule{0pt}{1.5em}\right.\VVparm12,\VVparm21
\left.\rule{0pt}{1.5em}\hskip -.3em \right)
\]
and
\[
\Br^{0}(2) = \NBrr{0}(2) \oplus
\Span\left(\rule{0pt}{1.5em}\right.\AAlparm12,\AAlparm21,\AArparm12,\AArparm21
\left.\rule{0pt}{1.5em}\right).
\]
\end{example}

\begin{definition}
We call an unlabeled $(l;\Rada k1n)$-tree {\em amputable\/} if all
terminal vertices of its legs are white. For such a tree $T$ we denote
by $\amp(T)$ the amputated $(\Rada k1n)$-tree obtained from $T$ by
removing all its legs.
\end{definition}

\begin{example}
The $(1;1,1)$-tree \ $\amputabletree 12$ \ is amputable, and 
\[
\rule{0pt}{3em}
\amp \left(\rule{0pt}{1.5em}\right.\amputabletree 12
\left.\rule{0pt}{1.5em}\right) = \amputatedtree 12.
\]
The $(2;1)$-tree  $\notamputabletree$ is not amputable.
\end{example}

For each amputated $(\Rada k1n)$-tree $S$ we define the
whiskering to be the product
\begin{equation}
\label{Syd}
\whis(S) := \prod(T;\ \mbox{$T$ is an amputable tree such that $\amp(T) = S$}).
\end{equation}
Recall that, by Convention~\ref{syd}, we interpret
the unlabeled trees in the right hand side as 
operations in $\Tam^d(n)\subset \Big^d(n)$, $d = n-1 -(k_1 +\cdots+ k_n)$, via 
the correspondence $T \leftrightarrow O_T$ introduced on 
page~\pageref{boli-mne-koleno}. An equivalent definition in terms of the
whiskered insertion into a corolla is given in~(\ref{smutek_od_rana}).

\begin{example}
\label{Hambos}
Of course, $w(\stub\ ) =\stub$ \hskip .5em represents the unit $1 \in \CH {-1}$.
The element given by the whiskering of $\AAnulamin$,
\[
w(\AAnulamin) = \prod_{d \geq 0} \AAmodifiedd \in \nTam \subset \nBr(1),
\]
is the identity  $f \mapsto f$, i.e., the unit of the operad $\Big$.
The whiskering of $\AAparm12$,
\[
w(\AAparm12) = \AAparm12 \sqcup \AArparmm 12 \sqcup  \AAlparmm 12
\sqcup  \AArrparmm 12 \sqcup  
\AAlrparm 12 \sqcup  \AAllparmm 12   \sqcup 
\cdots,
\]
gives the cup product~(\ref{cuuup}).  
The whiskering of the element
\[
\Abracemod n
\] 
gives the brace~(\ref{braaace}). In particular,
\raisebox{.2em}{$\VVparm12$} gives
Gerstenhaber's $\circ$-product and 
\raisebox{.2em}{$\VVparm12 -\  \VVparm 21$} the
Gerstenhaber bracket. 
Observe that the whiskering of the tree
\begin{equation}
\label{zas_v_Bonnu}
\AAparm21
\end{equation}
is the operation that assigns to $f \in \CH m$ and $g \in \CH n$ the
expression $(-1)^{mn} g \cup f$. The sign comes from the tree
signature factor~(\ref{koleno}) in the definition of the operation
$O_T$, because the order of the white vertices of the
tree~(\ref{zas_v_Bonnu}) and its whiskerings does not agree with the
natural planar one.
\end{example}

We are going to define operations $\pa$ and $\circ_i$ acting on
amputated trees that translate, via the whiskering~(\ref{Syd}), into
the dg-operad structure of $\Br$.  For an amputated $(\Rada k1n)$-tree
$S$ as in Definition~\ref{Def11} denote $\pa(S) := \pa_1(S) + \cdots +
\pa_n(S)$, where $\pa_i(S)$ is, for $k_i \geq 1$, the linear
combination of amputated trees obtained by replacing the $i$th white
vertex of $S$ by~(\ref{prijede_dnes_Jarka?}) followed by the
contraction of edges connecting black vertices if
necessary.\label{Jaruska-Beruska} For $k_i=0$ we put $\pa_i(S) =0$.

The description of the $\circ_i$-operations is more delicate.
Following~\cite[Section~5.2]{kontsevich-soibelman}, define the {\em set of
angles\/} of an amputated $(\Rada k1n)$-tree $S$ to be the disjoint
union
\[
\Angl(S) := \bigsqcup_{1 \leq i \leq n} \{0,\ldots,k_i\}.
\]
Angles come with a natural linear order whose definition is
clear from Figure~\ref{prazdnota} borrowed from~\cite{kontsevich-soibelman}.
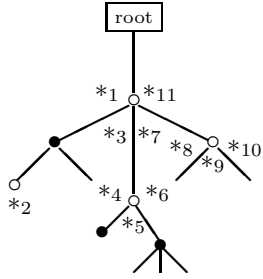
\begin{figure}
\thicklines
{% Picture saved by xtexcad 2.4
\unitlength=1pt
\begin{picture}(40.00,85.00)(-185.00,-65.00)
\put(0.00,3.00){\line(0,1){23.00}}
\put(-2,-2){\line(-2,-1){26.00}}
\put(2,-2){\line(2,-1){26.00}}
\put(32.00,-18.00){\line(1,-1){12.00}}
\put(28.00,-18.00){\line(-1,-1){12.00}}
\put(-28.00,-18.00){\line(1,-1){12.00}}
\put(-32.00,-18.00){\line(-1,-1){12.00}}
\put(0.00,-2.00){\line(0,-1){33.00}}
\put(-2.00,-40.00){\line(-1,-1){10.00}}
\put(1.00,-40.00){\line(2,-3){10.00}}
\put(10.00,-55.00){\line(0,-1){10.00}}
\put(10.00,-55.00){\line(1,-1){10.00}}
\put(10.00,-55.00){\line(-1,-1){10.00}}
\put(10.00,-55.00){\makebox(0.00,0.00){\large$\bullet$}}
\put(-12.00,-50.00){\makebox(0.00,0.00){\large$\bullet$}}
\put(0.00,-38.00){\makebox(0.00,0.00){\large$\circ$}}
\put(-45.00,-32.00){\makebox(0.00,0.00){\large$\circ$}}
\put(30.00,-16.00){\makebox(0.00,0.00){\large$\circ$}}
\put(-30.00,-16.00){\makebox(0.00,0.00){\large$\bullet$}}
\put(0.00,0.00){\makebox(0.00,0.00){\large$\circ$}}

\put(-10.00,3.00){\makebox(0.00,0.00){$*_1$}}
\put(-43.00,-40.00){\makebox(0.00,0.00){$*_2$}}
\put(-7.00,-13.00){\makebox(0.00,0.00){$*_{3}$}}
\put(-9.00,-35.00){\makebox(0.00,0.00){$*_4$}}
\put(0.00,-47.00){\makebox(0.00,0.00){$*_5$}}
\put(9.00,-35.00){\makebox(0.00,0.00){$*_6$}}
\put(6.00,-13.00){\makebox(0.00,0.00){$*_7$}}
\put(18.00,-18.00){\makebox(0.00,0.00){$*_8$}}
\put(30.00,-24.00){\makebox(0.00,0.00){$*_9$}}
\put(42.00,-18.00){\makebox(0.00,0.00){$*_{10}$}}
\put(10.00,3.00){\makebox(0.00,0.00){$*_{11}$}}
\put(0.00,26.00){\makebox(0.00,0.00)[b]{\boxed{\mbox {\scriptsize root}}}}
\end{picture}}
\caption{\label{prazdnota}
Angles of a tree symbolized by $\Rada *1{11}$. Their linear order,
indicated by the subscripts, is given by
walking around the tree counterclockwise, starting at the
root. Unlike~\cite[Section~5.2]{kontsevich-soibelman}, 
black vertices do not have angles. The labels of white vertices are
not shown.}
\end{figure}
Now, for an amputated $(\Rada {k'}1n)$-tree $S'$, an amputated $(\Rada
{k''}1m)$-tree $S''$ and $1 \leq i \leq n$, define $S' \circ_i S''$ to
be the linear combination
\begin{equation}
\label{Za_tyden_snad_s_Jarkou}
S' \circ_i S'' := \sum_\beta (S' \circ_i S'')_\beta,
\end{equation}
where the sum runs over all (non-strictly) monotonic maps $\beta :
\In(w'_i) \to \Angl(S'')$ from the set of incoming edges of the vertex
$w_i'$ of $S'$ labelled $i$, to the set
of angles of $S''$. 
In the sum, $(S' \circ_i S'')_\beta$ is the tree obtained by removing
the vertex $w'_i$ from $S'$ and replacing it by $S''$, with the
incoming edges of $w_i'$  glued into the angles of $S''$
following $\beta$. An important particular case is $k'_i = 0$
when $w'_i$ has no input edges. Then $S'\circ_i S''$ is
defined as the tree obtained by amputating $w'_i$ from $S'$ and
grafting the root of $S''$ at the place of $w'_i$.

We call the operation $\circ_i$ the {\em whiskered insertion\/}.  A
similar operation defines in~\cite{chapoton-livernet:pre-lie} the
structure of the operad for pre-Lie algebras.  As observed
in~\cite{kontsevich-soibelman}, the whiskering of
Proposition~\ref{JaRkA} can also be expressed as the product
\begin{equation}
\label{smutek_od_rana}
\raisebox{-1.8em}{\rule{0pt}{1pt}}
w(S) = \prod_{d \geq 0} \left(\AAmodifieddd\right) \circ_1  S.
\end{equation}
The following proposition can be verified directly.

\begin{proposition}
\label{whisk}
With $\pa$ and $\circ_i$ as defined above, 
the whiskering of Proposition~\ref{JaRkA} satisfies
\[
w(\pa S) = d (w (S)) \mbox { and }
w(S' \circ_i S'') = w(S') \circ_i w(S''),
\]
for all amputated trees $S$, $S'$, $S''$ and for all $i$ for which the
second equation makes sense. 
\end{proposition}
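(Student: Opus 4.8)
The plan is to verify both identities by a direct comparison of the trees occurring on the two sides. The feature that makes this practical is the one highlighted in the Remark following Proposition~\ref{ww}: once the operadic sign convention (Convention~\ref{Jak_to_dopadne_s_Jarkou?}) is in force, every amputable tree $T$ with $\amp(T)=S$ enters the whiskering~(\ref{Syd}) with coefficient $+1$, and the only signs present in the whiskered insertion~(\ref{Za_tyden_snad_s_Jarkou}), in the operator $\pa_i$ of~(\ref{prijede_dnes_Jarka?}), and in its analogue for $\delta$, are the global factors recorded in~(\ref{popozitri_Praha}). Hence for the composition identity it will be enough to set up a bijection between the sets of trees indexing the two sides, and for the differential identity a bijection for the ``expected'' terms together with a pairwise cancellation of the ``unwanted'' ones.

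For the composition identity I would expand $w(S')\circ_i w(S'')=\sum_{T',T''}T'\circ_i T''$, the sum ranging over all pairs of amputable trees with $\amp(T')=S'$ and $\amp(T'')=S''$ for which the arity of the $i$-th white vertex $w'_i$ of $T'$ coincides with the number of legs of $T''$, so that the operadic composition $\circ_i$ of $\Big$ is defined. In $T'\circ_i T''$ the $k'_i$ edges of $w'_i$ that already belong to $S'$ get matched, in planar order, with $k'_i$ of the legs of $T''$, each of which sits in a well-defined angle of $S''$; this produces a monotone map $\beta\colon\In(w'_i)\to\Angl(S'')$ of precisely the kind summed over in~(\ref{Za_tyden_snad_s_Jarkou}). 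The legs of $T'$ other than those on $w'_i$, together with the legs of $T''$ not hit in the matching, reappear after the substitution (and the prescribed collapsing of black--black edges) as whiskers on the white vertices of the amputated tree $(S'\circ_i S'')_\beta$, so that $T'\circ_i T''$ is again amputable with $\amp(T'\circ_i T'')=(S'\circ_i S'')_\beta$. The assignment $(T',T'')\mapsto(\beta,\,T'\circ_i T'')$ is then a bijection from the indexing set of the right-hand side onto the set of pairs $(\beta,T)$ with $\beta$ monotone and $T$ amputable with $\amp(T)=(S'\circ_i S'')_\beta$; its inverse cuts the edges running between the $S'$-part and the $S''$-part of $T$, re-creates the vertex $w'_i$, and distributes the remaining legs back onto the two factors. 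Summing over $\beta$ identifies the right-hand side with $w\bigl(\sum_\beta (S'\circ_i S'')_\beta\bigr)=w(S'\circ_i S'')$. The degenerate case $k'_i=0$, where $\beta$ is empty and $S'\circ_i S''$ is obtained by amputating $w'_i$ and grafting the root of $S''$ in its place, is dealt with in the same, simpler, manner, and the cases of \stub\ and of the exceptional tree \exeptional\ are immediate. I do not expect a real obstacle here beyond keeping track of planar orders.

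For the differential identity I would expand $d(w(S))=\sum_T\bigl(\pa_1 T+\cdots+\pa_n T-\delta T\bigr)$ over all amputable $T$ with $\amp(T)=S$, using the explicit descriptions of $\pa_j$ in~(\ref{prijede_dnes_Jarka?}) and of $\delta$ on legs. I would then sort the resulting trees according to whether the binary black vertex created by $\pa_j$ (respectively by $\delta$) is grafted only onto edges already present in $S$ (respectively onto the root) --- the ``expected'' terms --- or absorbs at least one whisker of $T$ --- the ``unwanted'' terms. Grouping the expected $\pa_j$-terms over all whiskerings of $S$ reproduces, whisker by whisker, exactly the trees of $w(\pa_j S)$ --- here the same ``splitting of whiskers'' bijection as in the composition case is used, now with $\pa_j S$ in place of $(S'\circ_i S'')_\beta$ --- so that their total over $j$ is $\sum_j w(\pa_j S)=w(\pa S)$. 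It remains to see that every unwanted $\pa_j$-term and every $\delta$-term cancels: a $\pa_j$-term that merges a whisker of $w_j$ with an adjacent whisker, or with the subtree above $w_j$ at an extreme position, coincides --- up to the opposite global sign forced by~(\ref{popozitri_Praha}) --- with a $\delta$-term, or with another $\pa$-term, computed on the whiskering of $S$ with that whisker deleted; iterating this produces a telescoping cancellation that wipes out all unwanted terms. Hence $d(w(S))=w(\pa S)$. As a sanity check, for $S$ the cup generator of Example~\ref{Hambos} both sides vanish, which is the statement that $\cup$ is a chain map.

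The delicate point, and the one I expect to be the real work, is this last step: the precise sign-reversing pairing of the whisker-absorbing parts of $\pa$ with the parts of $\delta$. This is exactly the bookkeeping that is opaque in the classical degree convention and becomes transparent in the operadic one --- which is why Convention~\ref{Jak_to_dopadne_s_Jarkou?} is adopted throughout this section; compare the Remark after Proposition~\ref{ww}. A cleaner alternative, once the composition identity is established and $\pa$ has been checked directly to be a derivation for the $\circ_i$, would be to verify $w(\pa S)=d(w(S))$ only for $S$ running over a generating set of the operad (the constant $1$, the cup products, and the braces), where the computation is short, and then to propagate it to arbitrary $S$ using that $w$ intertwines the $\circ_i$'s and that $d$ and $\pa$ are derivations.
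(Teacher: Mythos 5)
The paper offers no argument for this statement beyond the sentence ``The following proposition can be verified directly'' that precedes it, so there is no written proof to compare yours against; judged on its own terms, your plan is the intended direct verification and its two main ideas are correct. For the composition identity, the bijection $(T',T'')\mapsto(\beta,\,T'\circ_i T'')$ is exactly right: the planar matching of the inputs of $w'_i$ with the legs of $T''$ simultaneously produces the monotone map $\beta$ of~(\ref{Za_tyden_snad_s_Jarkou}) and redistributes the unconsumed legs as whiskers of $(S'\circ_i S'')_\beta$, and since by the Remark after Proposition~\ref{ww} every term of~(\ref{Syd}) carries coefficient $+1$, matching index sets does suffice. For the differential identity, your sorting into expected and unwanted terms, and the claim that the unwanted ones cancel in pairs (a whisker-absorbing $\pa_j$-term against a $\delta$-term computed on the whiskering with that whisker deleted, or against a $\pa_{j'}$-term computed on a different whiskering with the same number of legs, as in the verification that $\dh(f\cup g)=\pm\,\dh f\cup g\pm f\cup \dh g$), is the correct mechanism; note that this pairing is an involution on the unwanted terms rather than a telescoping iteration, and writing it down with the global signs of~(\ref{popozitri_Praha}) is the entire content of the proof --- it is the same computation that shows $\Br$ is a dg-suboperad of $\Big$, which the paper elsewhere delegates to McClure--Smith. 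One caveat on your ``cleaner alternative'': checking $w(\pa S)=d(w(S))$ on the generators~(\ref{brd}) and propagating requires knowing beforehand that $\pa$ satisfies the Leibniz rule for the whiskered insertion on amputated trees, and that verification runs into exactly the same cross-terms (the insertion $\sum_\beta$ raises the arities of the white vertices of $S''$, so $\pa$ applied after insertion differs from $\pa$ applied before insertion by precisely the whisker-absorbing contributions); it cannot be deduced from the derivation property of $d$ on $\Big$ without circularity, so this route relocates rather than removes the delicate step.
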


\begin{example}
We show how the classical calculations
of~\cite{gerstenhaber:AM63} can be concisely performed in
the language of amputated trees (but recall that we are using a
different sign and degree convention, see~\ref{Jak_to_dopadne_s_Jarkou?}). 
Let us start by calculating the
differentials of trees representing the cap product, the circle
product and the Gerstenhaber bracket. 
By definition, one has 
\begin{equation}
\label{H1}
\pa\lstretch1 \hskip -.2em \AAparm12  \hskip -.2em \rstretch1 = 0.
\end{equation}
Since~(\ref{prijede_dnes_Jarka?}) replaces $\VVV$ by $\AAAr + \AAAl$, one gets
\begin{equation}
\label{H2}
\pa\lstretch1 \hskip .2em \VVparm12 \rstretch1 = \AAparm12 + \AAparm21
\end{equation}
which implies that
\begin{equation}
\label{H3}
\pa \lstretch1 \hskip .2em \VVparm12 -  \VVparm 21  \rstretch1 = 
 \AAparm12 + \AAparm21  \hskip .2em - \hskip .2em \AAparm21 - \AAparm21 = 0.
\end{equation}

We want to interpret these equations in terms of operations. To save
the space, let us agree that in the rest of this example $f$ will be an
element of $\CH m$, $g$ an element of $\CH n$ and $h$ an element 
of $\CH k$, $m,n,k \geq -1$ arbitrary.
By Proposition~\ref{whisk},~(\ref{H1}) means that the differential of
the cup product
$\cupop$ recalled in~(\ref{cuuup}) and
considered as an element of $\Big(2)$ is zero,
$d(\cupop) =0$, which, by the definition~(\ref{popozitri_Praha}) 
of the differential in $\Big$ means that 
\[
- \dh(f \cup g) = \dh f \cup g + (-1)^{m} f \cup \dh g.
\]
We recognize~\cite[Eqn.~(20)]{gerstenhaber:AM63} saying that $\cupop $
is a chain operation. Since \raisebox{.2em}{$\VVparm12$} represents the
$\circ$-product,~(\ref{H2}) means that
\[
 f \cup g +  (-1)^{mn} g \cup f= 
\dh f \circ g + (-1)^{m} f \circ\dh g - \dh (f \circ g),
\]
which is the graded commutativity\footnote{Since we use the convention
  in which the cup product has degree $+1$, its commutativity is the
  {\em anti\/}symmetry.} of the cup product up to the homotopy
$\circop$ proved in~\cite[Theorem~3]{gerstenhaber:AM63}.  The origin
of the sign factor at the second term in the right hand side is
explained in Example~\ref{Hambos}.  The meaning of~(\ref{H3}) is that
\[
\dh [f,g] = [\dh f,g] + (-1)^m  [f,\dh g],
\]
so the bracket $[-,-]$ is a chain operation.

Let us investigate the compatibility between the cup product and the
bracket. Since, in $\Big(3)$,  $[-\cup-,-] = [-,-] \circ_1 (-\cup-)$, 
the description of the $\circ_i$-operations
in terms of amputated trees gives that $[f \cup g,h]$ is represented by  
\[
\raisebox{-1.5em}{\rule{0pt}{0pt}}
\AAAlparm fgh + \AAArparm gfh - \AAAparm fgh
\]
where we, for ease of reading, replaced the labels of white vertices
by the corresponding cochains.
Similarly, since $-\cup [-,-] = (-\cup-)\circ_2 [-,-]$ in
$\Big(3)$, $f  \cup [g,h]$ is represented by
\[
\raisebox{-1.5em}{\rule{0pt}{0pt}}
\AAArparm gfh -  \AAArparm hfg
\]
and, by the same reason, $[f,h] \cup  g$ is represented by 
\[
\raisebox{-1.5em}{\rule{0pt}{0pt}}
\AAAlparm fgh -  \AAAlparm hgf.
\]
Combining the above, one concludes that the expression $[f \cup
  g,h] - f  \cup [g,h] - [f,h] \cup  g$ is represented by
\begin{equation}
\label{vecer}
\raisebox{-1.5em}{\rule{0pt}{0pt}}
\AAArparm hfg + \AAAlparm hgf -  \AAAparm fgh.
\end{equation}
Because, by~(\ref{prijede_dnes_Jarka?}), $\pa$ replaces $\trioparm h$
by
\[
\AAArextparm h  \hskip -.2em + \AAAlextparm h - \trioextparam h,
\raisebox{-2em}{\rule{0pt}{0pt}}
\]
the expression in~(\ref{vecer}) equals
\[
\raisebox{-1.5em}{\rule{0pt}{0pt}}
\pa \lstretch {1.4}  \AAAAparm fgh \rstretch{1.4} .
\]
The meaning of the above calculations is that the bracket and the
cup product are compatible up to the homotopy given by the brace $-\{-,-\}$.
\end{example}

\begin{proof}[Proof of Proposition~\ref{JaRkA}]
\label{Jarca-pusa}
It follows from Proposition~\ref{whisk} that the image of $\whis$ 
contains $\Br$. Indeed,  $\Im(\whis)$ is a suboperad of $\Big$ which, by
Example~\ref{Hambos}, contains the generators of  $\Br$, i.e.~the cup product,
braces and $1$. The map $w$ is clearly a monomorphism, 
since each amputated $(\Rada k1n)$-tree 
$S$ equals the amputated part (i.e.~the component belonging to $\prod
B^0_{\Rada k1n}$) of its whiskering $w(S)$.

Therefore it remains to prove that  $\Im(\whis) \subset \Br$ or,
more specifically, that $w(S) \in \Br(n)$ for each amputated 
$(\Rada k1n)$-tree $S$ and $n \geq 0$.
We need to show that each such $S$ is build up, by the iterated whiskered
insertions $\circ_i$ of~(\ref{Za_tyden_snad_s_Jarkou}) and relabelings
of white vertices, from the `atoms' 
\begin{equation}
\label{brd}
\raisebox{-1.3em}{\rule{0pt}{0pt}}
\AAnulamin,\ \AAblack,\ \cup := \AAparm12 \ 
\mbox { and } \br d := \Abracemod {d\!\!+\!\!1},\ d \geq 1,
\end{equation}
representing the generators of $\Br$. Since the whiskering $w$ is an operad
homomorphism and the atoms are mapped to $\Br$, 
this would indeed imply that $\Im(\whis) \subset \Br$.

The first step is to get rid of the stubs. If $S$ has $s \geq 1$
stubs, we denote by $\overline S$ the tree $S$ with each 
stub replaced by $\AAnulamin$. Let us
label these new white vertices of $\overline S$ by $\rada {n+1}{n+s}$. Then
clearly
\[
S = \pm ( \cdots ((\overline S \circ_{n+1} \AAblack) \circ_{n+2} 
\AAblack) \cdots) \circ_{n+s} \AAblack.
\]
The sign in the above expression, not important for our
purposes, is a consequence of the Koszul sign rule, since
$\AAblack$ represents $1 \in A$ placed in degree $-1$.
So we may suppose that $S$ has no stubs and proceed by induction on the
number of internal edges. Assume that
$S$ has $e$ internal edges. If $e\leq 1$ then $S$ is either $\AAnulamin$
or $\br 1$, so we may assume that $e \geq 2$. We distinguish two cases.

{\em Case 1.\/} The root vertex (i.e.~the vertex adjacent to the root
edge) is white; assume it has $d \geq 1$ input edges. The tree $S$
looks as:
\[
\Jarunka \circ
\]
where $\Rada S1d$ are suitable amputated trees. It is then clear that
$S$ can be obtained from
\[
(\cdots ((\br d \circ_1 S_1) \circ_{2}  S_2) \cdots)
\circ_d S_d,
\]
where $\br d$ is the tree in~(\ref{brd}), by relabeling the white
vertices and changing the sign if necessary. Clearly, each $\Rada S1d$ has
less than $e$ internal edges, and the induction goes on. 

{\em Case 2.\/} The root vertex is black, with $d \geq 2$  inputs. If
$d=2$, we argue as in Case~1, only using $\cup$ instead of
$\br 2$. If $d \geq 3$, we use the equality
\[
\Jarunka \bullet \hskip 2em  \raisebox{2em}{=}  \hskip 2em \Jarunkamod
\] 
and argue as if $d =2$. This finishes the proof. 
\end{proof}

\begin{figure}
\[
{% Picture saved by xtexcad 2.4
\thicklines
\unitlength=.7pt
\begin{picture}(360.00,263.00)(-70.00,0.00)
\put(200.00,100.00){\makebox(0.00,0.00){$\HB$}}
\put(160.00,0.00){\makebox(0.00,0.00){$\nTam$}}
\put(0.00,50.00){\makebox(0.00,0.00){$\HBr$}}
\put(160.00,80.00){\makebox(0.00,0.00){$\Tam$}}
\put(200.00,180.00){\makebox(0.00,0.00){$\Big$}}
\put(0.00,130.00){\makebox(0.00,0.00){$\Br$}}
\put(0,83){
\put(160.00,80.00){\makebox(0.00,0.00){$\NormT$}}
\put(200.00,177.00){\makebox(0.00,0.00){$\NormB$}}
\put(0.00,130.00){\makebox(0.00,0.00){$\Norm(\Br)$}}
}
%VODOROVNE
%dole horni levy
\put(170.00,92.00){\vector(4,1){20.00}}
\put(24.00,55.00){\line(4,1){100.00}}
\put(0,83){
\put(170.00,92.00){\vector(4,1){20.00}}
\put(24.00,55.00){\line(4,1){100.00}}
}
%nahore dolni levy
\put(24.00,124.00){\vector(3,-1){120.00}}
\put(0,83){
\put(30.00,122.00){\vector(3,-1){97.00}}
}
%dole dolni levy
\put(24.00,44.00){\vector(3,-1){120.00}}
%nahore horni levy
\put(32.00,223.00){\vector(4,1){135.00}}
%SVISLE
%prvni
\put(0.00,68.00){\vector(0,1){49.00}}
\put(-5.00,63.00){
\qbezier(0,10)(0,0)(2.5,0)\qbezier(2.5,0)(5,0)(5,10)
}
%druhy
\put(160.00,18.00){\vector(0,1){49.00}}
\put(155,13.00){
\qbezier(0,10)(0,0)(2.5,0)\qbezier(2.5,0)(5,0)(5,10)
}
%treti
\put(200.00,122.00){\vector(0,1){45.00}}
\put(195,114.00){
\qbezier(0,10)(0,0)(2.5,0)\qbezier(2.5,0)(5,0)(5,10)
}
\put(0,83){
%prvni
\put(0.00,63.00){\vector(0,1){54.00}}
\put(0.00,63.00){\vector(0,1){44.00}}
%druhy
\put(160.00,13.00){\vector(0,1){54.00}}
\put(160.00,13.00){\vector(0,1){44.00}}
%treti
\put(200.00,113.00){\vector(0,1){54.00}}
\put(200.00,113.00){\vector(0,1){44.00}}
}
%nahore dopredu
\put(165.00,103.00){\vector(1,2){33.00}}
%dole dopredu
\put(166.00,15.00){\vector(1,2){33.00}}
\put(0,83){
%nahore dopredu
\put(161.00,93.00){\vector(1,2){35.00}}
}
\put(-5,170){\makebox(0.00,0.00)[r]{$\pi$}}
\put(206,220){\makebox(0.00,0.00)[l]{$\pi$}}
\put(155,120){\makebox(0.00,0.00)[r]{$\pi$}}

\put(0,-80)
{
\put(-5,170){\makebox(0.00,0.00)[r]{$\iota$}}
\put(206,220){\makebox(0.00,0.00)[l]{$\iota$}}
\put(155,120){\makebox(0.00,0.00)[r]{$\iota$}}
}

\end{picture}}
\]
\caption{Operads of natural operations and their maps; see also the
  glossary on page~\pageref{Smutek_z_Jarky.}. The horizontal
maps are inclusions.\label{fig3}}
\end{figure}
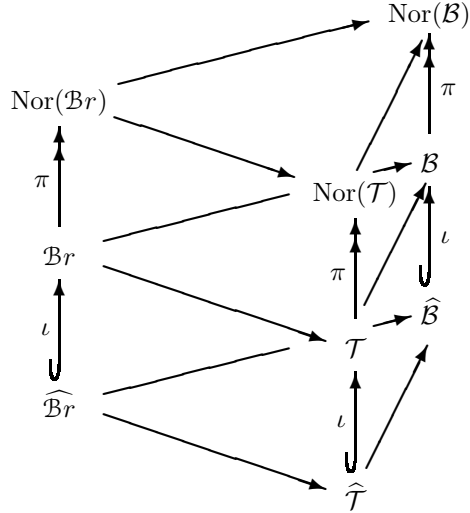

We finish this section by completing the proof of the following
theorem of~\cite{batanin-markl}.

\begin{theorem}
%\label{Jarka}
\label{Jaruska1} 
The operads introduced above can be organized into the diagram
in Figure~\ref{fig3}.  In this diagram:
\begin{enumerate}
\item  
Operads in the two upper triangles have the chain homotopy type of the operad
$\SC_{-*}(\Dis)$ of singular chains on the little disks operad $\Dis$
with the inverted grading.
In particular, the big operad $\Big$ of all natural operations has the
homotopy type of $\SC_{-*}(\Dis)$,

\item  
all morphisms between vertices of the two upper triangles 
are weak equivalences,
  
\item operads in the bottom triangle of Figure~\ref{fig3} have the chain
homotopy type of the operad $\SC_{-*}(\Dis)$ with the component of
arity $0$ replaced by the trivial abelian group, and 

\item  all 
morphisms in Figure~\ref{fig3} become weak equivalences after the 
application of the functor~$\mathfrak{U}$ that replaces the component of
arity $0$ of a dg-operad by the trivial abelian group. 
\end{enumerate}
\end{theorem}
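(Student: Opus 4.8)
The plan is to reduce everything to~\cite{batanin-markl} together with Theorem~\ref{jArKa}. First I would recall from~\cite{batanin-markl} that the sub-diagram of Figure~\ref{fig3} obtained by deleting the vertices $\Br$, $\HBr$ and $\Norm(\Br)$ already has all the asserted properties: $\Big$ has the chain homotopy type of $\SC_{-*}(\Dis)$; the inclusions $\nTam\hookrightarrow\nBr$, $\Tam\hookrightarrow\Big$, $\NormT\hookrightarrow\NormB$ and the projections $\pi\colon\Big\epi\NormB$, $\pi\colon\Tam\epi\NormT$ are weak equivalences; $\Big(0)$ and $\Tam(0)$ have the homology of $\SC_*(\mathrm{pt})$ whereas $\nBr(0)$, $\nTam(0)$ are acyclic; and $\nBr\hookrightarrow\Big$, $\nTam\hookrightarrow\Tam$ become weak equivalences after applying $\mathfrak U$, with $\mathfrak U(\nBr)\cong\mathfrak U(\NormB)$ and $\mathfrak U(\nTam)\cong\mathfrak U(\NormT)$. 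It then remains to insert the three brace operads and to identify the six new arrows $\HBr\hookrightarrow\nTam$, $\HBr\hookrightarrow\Br$, $\HBr\hookrightarrow\nBr$, $\Br\hookrightarrow\Tam$, $\Norm(\Br)\hookrightarrow\NormT$ and $\pi\colon\Br\epi\Norm(\Br)$.

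The second step is to recognise the three brace operads as lattice-path operads. Restricting the correspondence of Proposition~\ref{pozitri_uvidim_Jarusku}, together with the isomorphism $\ss\Tam\cong|\Lat 2|$, to marked paths with $l=0$ identifies amputated $(\Rada k1n)$-trees with $\Lat 2(\Rada k1n;0)$ and their stub-free part with the paths having no internal points; combined with Proposition~\ref{JaRkA} this gives $\Br\cong\Brac 2$, $\HBr\cong\HBrac 2$ and $\Norm(\Br)\cong\nBrac 2$ (up to the operadic suspension $\ss$), and, by the remark following Proposition~\ref{ww}, carries the whiskering $w$ of~(\ref{Syd}) to the whiskering $w$ of Proposition~\ref{ww}. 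Hence, up to $\ss$, the inclusion $\Br\hookrightarrow\Tam$ is the map $w\colon\Brac 2\hookrightarrow|\Lat 2|$, the projection $\pi\colon\Br\epi\Norm(\Br)$ is $\pi\colon\Brac 2\epi\nBrac 2$, and $\Norm(\Br)\hookrightarrow\NormT$ is the composite $\nBrac 2\stackrel{w}{\hookrightarrow}|\nLat 2|\stackrel{\dot\iota}{\hookrightarrow}|\sLat 2|=\NormT$.

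Now Theorem~\ref{jArKa} for $c=2$ says precisely that $w\colon\Brac 2\hookrightarrow|\Lat 2|$, $\pi\colon\Brac 2\epi\nBrac 2$ and $w\colon\nBrac 2\hookrightarrow|\nLat 2|$ are weak equivalences, and Lemma~\ref{JARKa} that $\dot\iota$ is one; therefore $\Br\hookrightarrow\Tam$, $\pi\colon\Br\epi\Norm(\Br)$ and $\Norm(\Br)\hookrightarrow\NormT$ are weak equivalences. Together with the first step this puts $\Br$ and $\Norm(\Br)$ into the two upper triangles with the homotopy type of $\SC_{-*}(\Dis)$ and makes every arrow between the six upper vertices a weak equivalence, which is~(1) and~(2). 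For $\HBr=\nuBr$ one has $\HBr(0)=0$, while in each arity $n\geq1$ the dg-operad map $\HBr(n)\hookrightarrow\Br(n)\stackrel{\pi}{\epi}\Norm(\Br)(n)$ is an isomorphism, since modulo trees with stubs every operation is represented by a unique stub-free tree; by two-out-of-three $\HBr\hookrightarrow\Br$ is a weak equivalence in arities $\geq1$, and combining this with the weak equivalences $\Norm(\Br)\hookrightarrow\NormT\hookrightarrow\NormB$ and the isomorphisms $\mathfrak U(\nTam)\cong\mathfrak U(\NormT)$, $\mathfrak U(\nBr)\cong\mathfrak U(\NormB)$ shows that $\HBr\hookrightarrow\nTam$ and $\HBr\hookrightarrow\nBr$ are weak equivalences in arities $\geq1$ as well. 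Because $\HBr(0)=0$ and $\nTam(0)$, $\nBr(0)$ are acyclic, the latter two maps are weak equivalences outright, and $\HBr$ has the homotopy type of $\SC_{-*}(\Dis)$ with its arity-$0$ component replaced by $0$; this proves~(3), and~(4) follows because the only new arrow that is not yet a weak equivalence, $\HBr\hookrightarrow\Br$ (it fails only in arity $0$), becomes one after $\mathfrak U$ in view of the resulting isomorphism $\mathfrak U(\HBr)\cong\mathfrak U(\Norm(\Br))$.

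The heavy analytic input --- that some, hence by the above all, of these operads is an $E_2$-operad, i.e.\ weakly equivalent to $\SC_{-*}(\Dis)$ --- is imported from~\cite{batanin-markl} (equivalently, via Proposition~\ref{Pozitri_prileti_Jaruska.}, from the fact that the surjection operad $F_2\mathcal X$ of~\cite{berger-fresse} is an $E_2$-operad); the work left to do is essentially bookkeeping. The main obstacle is precisely that bookkeeping: one must verify that, under the identifications of the second step, the six new morphisms of Figure~\ref{fig3} are \emph{literally} the maps to which Theorem~\ref{jArKa} and Lemma~\ref{JARKa} apply --- not merely maps between operads of the same homotopy type --- while keeping track of the operadic suspension of Proposition~\ref{pozitri_uvidim_Jarusku} and of the genuine discrepancy at arity $0$, where $\HBr$, $\nTam$, $\nBr$ and $\Br$ differ and are reconciled only after applying $\mathfrak U$.
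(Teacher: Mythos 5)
Your proposal is correct and follows essentially the same route as the paper: the paper's own proof simply notes that everything except the weak equivalence $w\colon\Br\to\Tam$ was already established in~\cite{batanin-markl}, and derives that missing piece from Theorem~\ref{jArKa} together with the identifications $\ss\Tam\cong|\Lat 2|$ and $\ss\Br\cong\Brac 2$ of Proposition~\ref{pozitri_uvidim_Jarusku}. You carry out the same reduction, merely spelling out explicitly the bookkeeping (the identification of the remaining brace arrows and the arity-$0$ discrepancy for $\HBr$) that the paper leaves implicit.
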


\begin{proof}
The only piece of information that was missing in~\cite{batanin-markl}
and for which we had to refer to this paper was that the whiskering $w
: \Br \to \Tam$ is a weak equivalence. This fact follows from
Theorem~\ref{jArKa}, the identification $\ss\!\Tam \cong |\Lat 2|$
established in Proposition~\ref{pozitri_uvidim_Jarusku}, and the
induced identification $\ss\!\Br \cong \Brac 2$ of suboperads.
\end{proof}

\begin{remark} 
Theorem \ref{Jaruska1} shows that, up to homotopy, there is no
difference between actions on the Hochschild cochains of the operads
$\Big$, $\Tam$ and $\Br$, resp.~$\nBr$, $\nTam$ and $\nuBr$ in the nonunital
case, resp.~$\Norm(\Big)$, $\Norm(\Tam)$ and $\Norm(\Br)$ in the
normalized case.
\end{remark}

\appendix
\section{Substitudes, convolution and condensation}
\label{s0}

In this appendix we briefly remind the reader of some categorical
definitions and constructions we use in the paper. Most of the
material is contained in
\cite{DS1},\cite{DS2},\cite{mcclure-smith:JAMS03} and \cite{bb}.

Let $V$ be a symmetric monoidal closed category. Let $A$ be a small
$V$-category and let $[A,V]$ be the $V$-category of $V$-functors from
$A$ to $V.$ The enriched $\Hom$-functor $\Nat_A(F,G)$ is given by the
end:
$$
\Nat_A(F,G) := \int_{X\in A} V(F(X),G(X)).
$$ 
We also define the tensor product of the $V$-functors
$F:A^{op}\rightarrow V$ and $G:A\rightarrow V$ by the coend
$$
F \otimes_A G := \int^{X\in A} F(X)\otimes G(X).
$$

\begin{definition} 
A {\em $V$-substitude\/}  $(P,A)$ is a small 
$V$-category  $A$ together
with a sequence of $V$-functors:
$$
P_n: \underbrace{A^{op}\otimes\cdots\otimes A^{op}}_{n-times}\otimes  
A \rightarrow V, \  n\ge 0,\ $$ $$P_n(X_1,\ldots,X_n; X) =  
P_{X_1,\ldots,X_n}^ X
$$
equipped with
\begin{itemize}
\item 
a $V$-natural family of substitution operations
$$\mu: P_{X_1,\ldots,X_n}^ X \otimes P_{X_{11},\cdots,X_{1m_1}}^  
{X_1}\otimes\cdots\otimes P_{X_{n1},\ldots,X_{nm_n}}^ {X_n}\rightarrow  
P_{X_{11},\ldots,X_{nm_n}}^ {X}$$
\item 
a $V$-natural family of morphisms (unit of substitude)
$$\eta: A(X,Y)\rightarrow P_1(X;Y) = P_X^Y$$
\item 
for each permutation $\sigma\in S_n$ a $V$-natural family of  
isomorphisms
$$\gamma_{\sigma}:P_{X_1,\ldots,X_n}^X \rightarrow P_{X_{\sigma(1)}, 
\ldots,X_{\sigma(n)}}^X,$$
\end{itemize}
satisfying some associativity, unitality and equivariancy conditions  
\cite{DS2}.
\end{definition}

Notice that $P_1$ is a $V$-monad on $A$ in the bicategory of 
$V$-bimodules ($V$-profunctors or  $V$-distributors). The Kleisli category  
of this monad is called {\it the underlying category of $P.$}

The concept of substitude generalizes operads and
symmetric lax-monoidal categories. Indeed, any coloured
operad $P$ in $V$ with the set of colours $S$ is naturally a
substitude $(P, U(P))$ with $U(P)$ equal the $V$-category with the set
of objects $S$ and the object of morphisms $U(P)(X,Y)= P(X;Y)\in V $.
The substitution operation in the coloured operad $P$ makes the
assignment $P_n(X_1,\ldots,X_n; X) = P_{X_1,\ldots,X_n}^ X $ a functor
$$ 
P_n: \underbrace{U(P)^{op}\otimes\cdots\otimes
U(P)^{op}}_{n-times}\otimes\ U(P) \rightarrow V, \ n\ge 0,\ 
$$ 
and the
sequence of these functors form a substitude. The category $U(P)$ is
the underlying category of this substitude also called the underlying
category of the coloured operad $P.$ In fact, a substitude
is a coloured operad $P$ together with a small $V $-category
$A$ and a $V$-functor $\eta: A \rightarrow U(P)$
\cite[Prop. 6.3]{DS1}.

\begin{definition} \cite{batanin:AM08,DS2}
A {\em symmetric lax-monoidal structure\/} or a {\em multitensor\/} on
a $V$-category $C$ is a sequence of $V$-functors
$$E_n: \underbrace{C\otimes\cdots\otimes C}_{n- times} \rightarrow C$$
equipped with
\begin{itemize}
\item 
a family of  $V$-natural transformations:
$$ \mu:E_n(E_{m_1},\ldots,E_{m_k})\rightarrow E_{m_1+\cdots+m_k};$$
\item 
A $V$-natural transformation (unit)
$${\it Id} \rightarrow E_1;$$
\item  an action of symmetric group
$$\gamma_{\sigma}:E_n(X_1,\ldots,X_n) \rightarrow  
E_n(X_{\sigma^{-1}(1)},\ldots,X_{\sigma^{-1}(n)}),$$
\end{itemize}
satisfying some natural associativity, unitarity and equivariance  conditions.
\end{definition}

\begin{definition} 
\cite{mcclure-smith:JAMS03} A multitensor is called a {\em
functor-operad\/} if its unit is an isomorphism.
\end{definition}

McClure and Smith observed in~\cite{mcclure-smith:JAMS03} that
functor-operads can be used to define operads.  Their observation
works also for multitensors. Let $\delta\in C$ be an object of $C$
then the coendomorphism operad of $\delta$ with respect to a
multitensor $E$ is given by a collection of objects in $V$
$$\Coend^E(\delta)(n) = C(\delta,E_n(\delta,\ldots,\delta)).$$

Substitudes and multitensors are related  by the following convolution 
operation \cite{DS1,DS2}.

\begin{definition}  
Let $(P,A)$ be a substitude.  We define a multitensor $E^P$ on $C=
[A,V]$ as follows:
\begin{equation}
\label{coend} 
E^P_n(\phi_1,\ldots,\phi_n)(X)=  P_{-,\ldots, -}^X\otimes_A \phi_1(-)\otimes_A 
\cdots\otimes_A \phi_n(-).
\end{equation} 
\end{definition}

A special case of this construction is when $A$ is equal to the  
underlying category of $P.$ In this case
the convolution operation produces a functor-operad.

Let $(P,A)$  be a substitude and let $\delta: A\rightarrow V 
$ be a $V$-functor.

\begin{definition} 
By a {\em $\delta$-condensation\/} of the substitude $(P,A)$ we mean the
operad $C^{(P,A)}(\delta) = \Coend^{E^P}\hskip -2.3pt(\delta)$. 
So, as a collection
it is given by
$$ 
C^{(P,A)}(\delta)(n) = \Nat_A(\delta, E_n^P(\delta,\ldots, \delta)).
$$
\end{definition}

The operad $C^{(P,A)}(\delta)$ naturally  
acts on the objects of the form
$$ 
{\it Tot\/}_{\delta}(\phi)= \Nat_A(\delta, \phi)
%\int_X V(\delta(X), \phi(X)), 
$$
for an arbitrary $V$-functor  $\phi: A\rightarrow V$ 
($\delta$-totalization of $\phi$)~\cite{mcclure-smith:JAMS03,bb}.

Let $i:B\rightarrow A$ and  $\delta: B\rightarrow V 
$ be two $V$-functors. Let $Lan_i(\delta)$  be a ($V$-enriched) left Kan  
extension of $\delta$ along $i .$ Then
$$
{\it Tot\/}_{Lan_i(\delta)}(\phi) =   
%\int_{X \in A} V(Lan_i(\delta)(X),  \phi(X)) 
\Nat_A(Lan_i(\delta),  \phi)
= \Nat_B(\delta, i^*(\phi))
%\int_{Y\in B} V(\delta(Y), i^*(\phi)(Y))
 = 
{\it Tot\/}_{\delta} 
(i^*(\phi)),
$$
where $i^*$ is the restriction functor induced by $i .$

There is a similar formula  which expresses the condensation with  
respect to $Lan_i(\delta) .$
Let $(P,A)$ be a substitude and let $i_{*,\ldots,*}(P)$ be a sequence of  
functors
$$ i_{*,\ldots,*}(P)_n: B^{op}\otimes \cdots\otimes B^{op}\otimes A
\rightarrow V,
$$ 
$$ i_{*,\ldots,*}(P)^A_{B_1,\cdots,B_n} = P^A_{i(B_1),\ldots,i(B_n)} \ \  
\ \   \ \  . 
$$
We define a sequence of functors 
$$
{E}_n^{i_{*,\ldots,*}(P)}: [B,V] \otimes \cdots \otimes [B,V]
\rightarrow  [A,V]
$$ 
by the formula similar to formula~(\ref{coend}).
We also define $i^*P$ as the substitude $(i^*P,B)$ obtained from $P$  
by restricting $P_n$ along $i .$

\begin{proposition}
\label{Lancondensation} 
For the functors $\phi_1,\ldots \phi_n \in [B,V]$  
the following $V$-natural isomorphisms hold:
$$ 
E_n^P(Lan_i(\phi_1),\ldots,Lan_i(\phi_n)) = {E}_n^{i_{*,\ldots,*}(P)} 
(\phi_1,\ldots,\phi_n).
$$
In particular,
\begin{eqnarray*}
C^{(P,A)}(Lan_i(\delta))(n)\!\! &=& \!\!
{\it Tot\/}_{\delta}(i^*{E}_n^{i_{*,\ldots,*}(P)} (\delta, 
\ldots,\delta) )
\\
\!\!&=& \! \!{\it Tot\/}_{\delta}( E_n^{i^*(P)}(\delta,\ldots,\delta) ) =  
C^{(i^*(P),B)}(\delta)(n).
\end{eqnarray*}
\end{proposition}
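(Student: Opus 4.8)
The plan is to reduce the first isomorphism to a computation of iterated coends. First I would unfold the definition~(\ref{coend}) of the convolution multitensor together with the usual coend presentation of the left Kan extension, $Lan_i(\phi)(A)=\int^{B\in B}A(i(B),A)\otimes\phi(B)$, to rewrite
\[
E_n^P(Lan_i(\phi_1),\ldots,Lan_i(\phi_n))(X)=\int^{A_1,\ldots,A_n\in A}\,\int^{B_1,\ldots,B_n\in B}P^X_{A_1,\ldots,A_n}\otimes\bigotimes_{j=1}^{n}\bigl(A(i(B_j),A_j)\otimes\phi_j(B_j)\bigr).
\]
By Fubini for coends I would view this as one joint coend and perform the $A_j$-integrations one at a time: for fixed $B_1,\ldots,B_n$, the factors not involving $A_1$ pull out of the $A_1$-coend, and the co-Yoneda (density) lemma contracts $\int^{A_1}A(i(B_1),A_1)\otimes P^X_{A_1,A_2,\ldots,A_n}$ to $P^X_{i(B_1),A_2,\ldots,A_n}$. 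The variance is exactly right for this: $P_n$ is contravariant in each slot $A_1,\ldots,A_n$, so $P^X_{\ldots,-_j,\ldots}$ is a $V$-presheaf on $A$ while $A(i(B_j),-)$ is covariant. Iterating over $j=1,\ldots,n$ leaves $\int^{B_1,\ldots,B_n}P^X_{i(B_1),\ldots,i(B_n)}\otimes\bigotimes_j\phi_j(B_j)$, which is $E_n^{i_{*,\ldots,*}(P)}(\phi_1,\ldots,\phi_n)(X)$ by the defining formula for $i_{*,\ldots,*}(P)$ and the analogue of~(\ref{coend}). These identifications are $V$-natural in $X$ and in the $\phi_j$, and a routine diagram chase shows they commute with the $S_n$-actions, giving the first displayed isomorphism.

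For the \emph{in particular} clause I would compute
\[
C^{(P,A)}(Lan_i(\delta))(n)=\Nat_A\bigl(Lan_i(\delta),E_n^P(Lan_i(\delta),\ldots,Lan_i(\delta))\bigr),
\]
substitute the isomorphism just proved to replace the second argument by $E_n^{i_{*,\ldots,*}(P)}(\delta,\ldots,\delta)$, and then apply the Kan adjunction recorded above, $\Nat_A(Lan_i(\delta),-)=\Nat_B(\delta,i^*(-))=\Tot_\delta(i^*(-))$, to obtain $\Tot_\delta\bigl(i^*E_n^{i_{*,\ldots,*}(P)}(\delta,\ldots,\delta)\bigr)$; this is the first equality. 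For the second, I would compare definitions directly: restricting the remaining $A$-variable of $i_{*,\ldots,*}(P)$ along $i$ returns $i^*P$, so evaluating at $B\in B$ gives
\[
i^*E_n^{i_{*,\ldots,*}(P)}(\delta,\ldots,\delta)(B)=\int^{B_1,\ldots,B_n}P^{i(B)}_{i(B_1),\ldots,i(B_n)}\otimes\bigotimes_j\delta(B_j)=E_n^{i^*P}(\delta,\ldots,\delta)(B),
\]
i.e.\ $i^*E_n^{i_{*,\ldots,*}(P)}(\delta,\ldots,\delta)=E_n^{i^*P}(\delta,\ldots,\delta)$ as $V$-functors on $B$. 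The last equality is then just the definition of the condensation, $C^{(i^*P,B)}(\delta)(n)=\Tot_\delta(E_n^{i^*P}(\delta,\ldots,\delta))$.

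The one genuinely delicate step is the co-Yoneda contraction of the $A_j$-coends: one has to keep track carefully of which of the many variables are covariant and which contravariant, so that the enriched density lemma really produces $P^X_{\ldots,i(B_j),\ldots}$ and not a twisted variant, and one has to justify the Fubini interchange, which is legitimate because $V$ is cocomplete and closed, so all the coends in sight exist and $\otimes$ preserves them in each variable. Everything else — the $V$-naturality, the $S_n$-equivariance, and the bookkeeping of which arguments $i^*$ and $i_{*,\ldots,*}$ restrict — is routine.
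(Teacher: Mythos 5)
The paper states this proposition without giving any proof, so there is nothing to compare against; judged on its own merits, your argument is correct and is certainly the intended one. The first isomorphism is exactly the co-Yoneda contraction of the $A_j$-coends in the convolution formula~(\ref{coend}) after substituting the coend presentation of $Lan_i$, with the variances as you describe ($P^X_{\ldots,-,\ldots}$ contravariant against the covariant $A(i(B_j),-)$), and the ``in particular'' clause then follows from the adjunction $\Nat_A(Lan_i(\delta),-)\cong\Nat_B(\delta,i^*(-))$ recorded in the text immediately before the proposition, together with the purely definitional identification $i^*E_n^{i_{*,\ldots,*}(P)}(\delta,\ldots,\delta)=E_n^{i^*(P)}(\delta,\ldots,\delta)$. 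The caveats you flag --- cocompleteness of $V$ and cocontinuity of $\otimes$ in each variable to justify the Fubini interchange --- are the right ones, and both hold in the situations the paper uses (chain complexes).
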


This result allows to see many of the operads in this paper as the
result of $\delta$-condensation of some substitudes. For us $V$ will
be the category of chain complexes ${\it Ch}$.  Our category $A$ will
be the category of nonempty ordinals $\Delta$ (linearized) or the
crossed interval category $(IS)^{op}$~\cite{batanin-markl} (also
linearized).  $B$ can be $\Delta$ or its subcategory of injective
order preserving maps $\Delta_{in} .$ These categories are related by
the canonical inclusions:
$$
\Delta_{in}\stackrel{i}{\longrightarrow} \Delta 
\stackrel{j}{\longrightarrow}(IS)^{op}.
$$ 
Let $\delta: \Delta \rightarrow {\it Ch}$ be the cosimplicial chain
complex of normalized chains on standard simplices. It is classical
that the totalization of a cosimplicial chain complex $X^{\bullet}$
with respect to $\delta$ is the normalized cosimplicial
totalization $\oN(X^{\bullet})$ and the tensor product
$X_{\bullet}\otimes_{\Delta} \delta$ for a simplicial chain complex
$X_{\bullet}$ is the normalized simplicial realization
$\uN(X_{\bullet}).$ Hence, the condensation of the lattice path operad
$\Lat c$ with respect to $\delta$ is precisely the $n$-simplicial
cosimplicial normalization $$|\overline{ \Lat c} | = \oN(\uN(\Lat c
(\Rada \bullet1n;\bullet))) = C^{(\Lat c ,\Delta)}(\delta).$$

Proposition~\ref{Lancondensation} shows that 
the condensation of  the lattice path operad $\Lat c$ with respect to 
$Lan_i(i^*(\delta))$ is the unnormalized $n$-simplicial cosimplicial totalization 
$$|\Lat c | = \oTot(\uTot(\Lat c (\Rada \bullet1n;\bullet))) =  C^{(i^*(\Lat c) ,\Delta_{in})}(i^*(\delta))= C^{(\Lat c ,\Delta)}(Lan_i(i^*(\delta)))   $$

Analogously, for the  operad of natural operations on the
Hochschild cochains we use the condensation with respect to
$Lan_j(\delta)$ for the normalized version and with respect to
$Lan_{ji}(i^*{\delta})$ for the unnormalized version that is
$$\label{bigcon} \Big = C^{(B  ,(IS)^{op})}(Lan_i(i^*(\delta))) .$$
In~\cite{bb} similar
calculations were applied to the cyclic version of the lattice path
operad.

\backmatter

%\bibliography{bbm}
\providecommand{\bysame}{\leavevmode ---\ }
\providecommand{\og}{``}
\providecommand{\fg}{''}
\providecommand{\smfandname}{\&}
\providecommand{\smfedsname}{\'eds.}
\providecommand{\smfedname}{\'ed.}
\providecommand{\smfmastersthesisname}{M\'emoire}
\providecommand{\smfphdthesisname}{Th\`ese}

\end{document}